\newtheorem{eg}{Example}[section]
\title{The Continuity of Images by Transmission Imaging Revisited
\thanks{This work was supported by the National Natural Science Foundation of China (NSFC) No. 11301289.}}
\author{Zhitao Fan\thanks{Department of Mathematics, National University of Singapore, Singapore.}
\and Feng Guan\thanks{Department of Mathematics, University of California, Los Angeles, USA} \and
Chunlin Wu\thanks{School of Mathematical Sciences, Nankai University, PRC} \and
Ming Yan\thanks{Department of Mathematics, University of California, Los Angeles, USA} }
\begin{document}
\maketitle
\slugger{sima}{2013}{xx}{x}{x--x}%slugger should be set to mms, siap, sicomp, sicon, sidma, sima, simax, sinum, siopt, sisc, or sirev

\begin{abstract}
\textit{Transmission} imaging, as an important imaging technique widely used in astronomy, medical diagnosis, and biology science, has been shown in~\cite{Wu13} quite different from \textit{reflection} imaging used in our everyday life. Understanding the structures of images (the prior information) is important for designing, testing, and choosing image processing methods, and good image processing methods are helpful for further uses of the image data, e.g., increasing the accuracy of the object reconstruction methods in transmission imaging applications. In reflection imaging, the images are usually modeled as discontinuous functions and even piecewise constant functions. In transmission imaging, it was shown very recently in~\cite{Wu13} that almost all images are continuous functions. However, the author in~\cite{Wu13} considered only the case of parallel beam geometry and used some too strong assumptions in the proof, which exclude some common cases such as cylindrical objects. In this paper, we consider more general beam geometries and simplify the assumptions by using totally different techniques. In particular, we will prove that almost all images in transmission imaging with both parallel and divergent beam geometries (two most typical beam geometries) are continuous functions, under much weaker assumptions than those in~\cite{Wu13}, which admit almost all practical cases. Besides, taking into accounts our analysis, we compare two image processing methods for Poisson noise (which is the most significant noise in transmission imaging) removal. Numerical experiments will be provided to demonstrate our analysis.
\end{abstract}

\begin{keywords}
transmission imaging, reflection imaging, Radon transform, parallel beam geometry, divergent beam geometry, continuity, measure zero, Poisson noise removal
\end{keywords}

\begin{AMS}
92C55, 90C90, 68U10
\end{AMS}

\pagestyle{myheadings}
\thispagestyle{plain}
\markboth{Z. Fan, F. Guan, C. Wu, and M. Yan}{The Continuity of Images by Transmission Imaging Revisited}

\section{Introduction}
Imaging is an important technique which translates a physical scene to lower dimensional (typically 2D) data for convenient observation and record. It has been applied to many fields, including our everyday life, medical diagnosis, exploring the universe, and biological structure analysis. Many imaging systems and instruments, such as various digital cameras, X-ray computed tomography (CT), telescopes, and microscopes, have been developed. Different imaging systems are based on different physical principles. Digital cameras used in our everyday life record the reflection part of the incoming light~\cite{Mumford89}, whereas transmission electron microscopes generate images by counting the electrons having transmitted the scene \cite{Kak88,Frank96,Natterer01}. We refer to these two kinds of imaging techniques by \textit{reflection} imaging and \textit{transmission} imaging in this paper for clarity. See Fig.~\ref{fig-RefTranImaging}. We will consider in this paper the most common case that objects are in $\mathbf{R}^3$ and images are 2D data. The reflection imaging is meaningful by itself, while the transmission imaging is not and the final objective of transmission imaging is to reconstruct the 3D object (density function) from many 2D images.

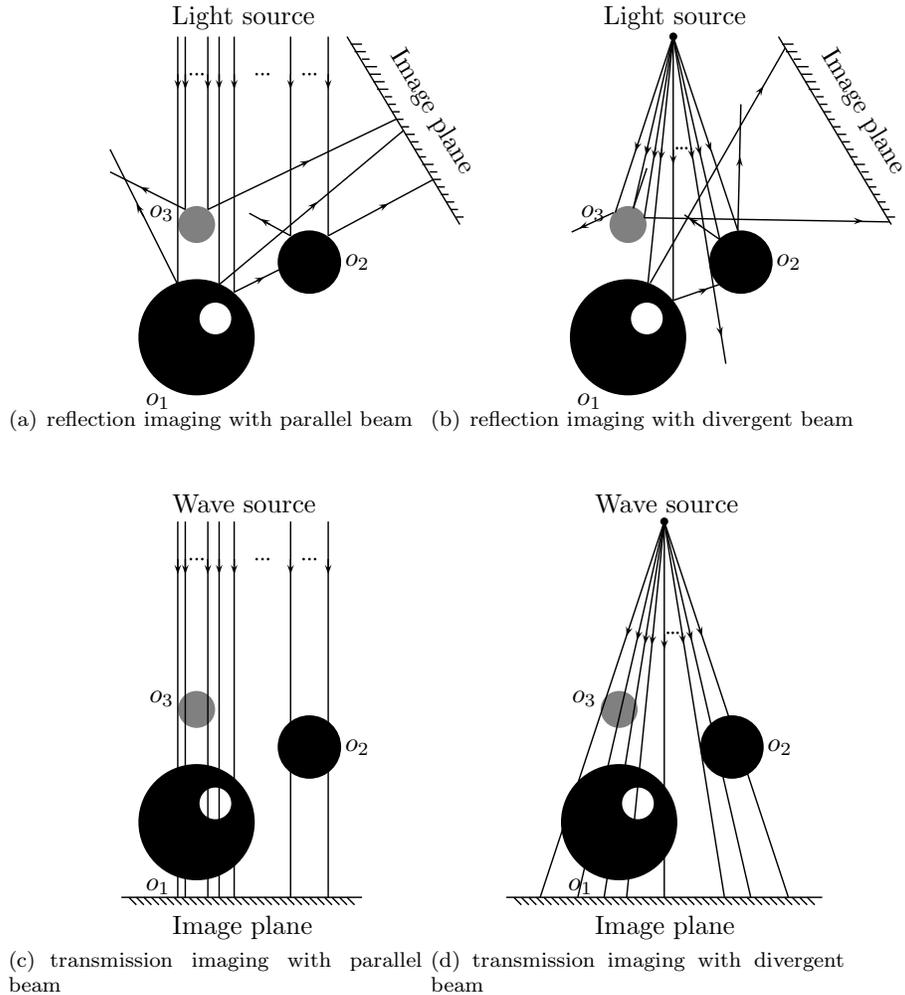
\begin{figure}[htb]
\begin{center}
\subfigure[reflection imaging with parallel beam]{
\psset{xunit=0.5cm,yunit=0.5cm,linewidth=0.19mm}
\begin{pspicture}(-5,-1.5)(6,10)
% the objects
  \psset{linecolor=black}
  \qdisk(0,0){22pt}
  \psset{linecolor=white}
  \qdisk(0.5,0.5){6pt}
  \uput[0](-1.7,-1.7){$o_1$}
  \psset{linecolor=black}
  \qdisk(3,2){12pt}
  \uput[0](3.6,2){$o_2$}
  \psset{linecolor=gray}
  \qdisk(0,3){7pt}
  \uput[0](-1.6,3.3){$o_3$}
% the image plane
  \psset{linecolor=black}
  \psline(7,3)(4,8)
  \psline(4.1,7.8)(4.4,7.8)\psline(4.2,7.6)(4.5,7.6)\psline(4.4,7.4)(4.6,7.4)\psline(4.5,7.2)(4.8,7.2)\psline(4.6,7.0)(4.9,7.0)\psline(4.7,6.8)(5,6.8)
  \psline(4.8,6.6)(5.1,6.6)\psline(5,6.4)(5.2,6.4)\psline(5.1,6.2)(5.3,6.2)\psline(5.2,6)(5.4,6)\psline(5.3,5.8)(5.5,5.8)\psline(5.45,5.6)(5.65,5.6)
  \psline(5.55,5.4)(5.8,5.4)\psline(5.7,5.2)(5.9,5.2)\psline(5.8,5)(6,5)\psline(5.9,4.8)(6.1,4.8)\psline(6,4.6)(6.3,4.6)\psline(6.2,4.4)(6.4,4.4)
  \psline(6.3,4.2)(6.5,4.2)\psline(6.4,4)(6.6,4)\psline(6.5,3.8)(6.7,3.8)\psline(6.65,3.6)(6.85,3.6)\psline(6.75,3.4)(6.95,3.4)\psline(6.9,3.2)(7.1,3.2)
  \uput[0]{-59}(5,6.8){Image plane}
% the light and its reflection
  \psline(-0.5,8)(-0.5,1.4)\psline{->}(-0.5,7)(-0.5,6.6)\psline(-0.5,1.4)(-2.3,5)\psline{->}(-1.4,3.2)(-1.6,3.55)
  \psline(-0.3,8)(-0.3,3.4)\psline{->}(-0.3,7)(-0.3,6.6)\psline(-0.3,3.4)(-2.3,4.4)\psline{->}(-1.3,3.9)(-1.5,4.0)
  \psline(0.3,8)(0.3,3.4)\psline{->}(0.3,7)(0.3,6.6)\psline(0.3,3.4)(5.3,5.8)\psline{->}(2.8,4.6)(3,4.7)
  \psline(0.6,8)(0.6,1.4)\psline{->}(0.6,7)(0.6,6.6)\psline(0.6,1.4)(5.5,5.5)\psline{->}(3.05,3.45)(3.2,3.6)
  \psline(1,8)(1,1.2)\psline{->}(1,7)(1,6.6)\psline(1,1.2)(2.2,1.8)\psline{->}(1.6,1.5)(1.8,1.6)
  \psline(2.5,8)(2.5,2.7)\psline{->}(2.5,7)(2.5,6.6)\psline(2.5,2.7)(1.4,3.3)\psline{->}(1.95,3)(1.7,3.15)
  \psline(3.5,8)(3.5,2.7)\psline{->}(3.5,7)(3.5,6.6)\psline(3.5,2.7)(6.3,4.2)\psline{->}(4.9,3.45)(5.1,3.55)
%  \uput[90](-0.2,6.6){$\cdots$}
  \psset{dotstyle=*,dotangle=0,dotsize=1pt}
  \psdots(-0.15,7)(0,7)(0.15,7)
  \psdots(1.6,7)(1.75,7)(1.9,7)
  \psdots(2.85,7)(3,7)(3.15,7)
  \uput[0](-1,8.5){Light source}
% the image ''on" the image plane
\end{pspicture}}
\subfigure[reflection imaging with divergent beam]{
\psset{xunit=0.5cm,yunit=0.5cm,linewidth=0.19mm}
\begin{pspicture}(-5,-1.5)(6,10)
% the objects
  \psset{linecolor=black}
  \qdisk(0,0){22pt}
  \psset{linecolor=white}
  \qdisk(0.5,0.5){6pt}
  \uput[0](-1.7,-1.7){$o_1$}
  \psset{linecolor=black}
  \qdisk(3,2){12pt}
  \uput[0](3.6,2){$o_2$}
  \psset{linecolor=gray}
  \qdisk(0,3){7pt}
  \uput[0](-1.6,3.3){$o_3$}
% the image plane
  \psset{linecolor=black}
  \psline(7,3)(4,8)
  \psline(4.1,7.8)(4.4,7.8)\psline(4.2,7.6)(4.5,7.6)\psline(4.4,7.4)(4.6,7.4)\psline(4.5,7.2)(4.8,7.2)\psline(4.6,7.0)(4.9,7.0)\psline(4.7,6.8)(5,6.8)
  \psline(4.8,6.6)(5.1,6.6)\psline(5,6.4)(5.2,6.4)\psline(5.1,6.2)(5.3,6.2)\psline(5.2,6)(5.4,6)\psline(5.3,5.8)(5.5,5.8)\psline(5.45,5.6)(5.65,5.6)
  \psline(5.55,5.4)(5.8,5.4)\psline(5.7,5.2)(5.9,5.2)\psline(5.8,5)(6,5)\psline(5.9,4.8)(6.1,4.8)\psline(6,4.6)(6.3,4.6)\psline(6.2,4.4)(6.4,4.4)
  \psline(6.3,4.2)(6.5,4.2)\psline(6.4,4)(6.6,4)\psline(6.5,3.8)(6.7,3.8)\psline(6.65,3.6)(6.85,3.6)\psline(6.75,3.4)(6.95,3.4)\psline(6.9,3.2)(7.1,3.2)
  \uput[0]{-59}(5,6.8){Image plane}
  \psline(1.2,8)(1.2,0.98)\psline{->}(1.2,5)(1.2,4.6)\psline(1.2,0.98)(2.5,1.43)\psline{->}(1.9,1.2)(2.125,1.3)
  \psline(1.2,8)(2.6,-0.705)\psline{->}(1.674,5.04)(1.71,4.74)\psline{->}(2.466,0.1)(2.48,0.03)
  \psline(1.2,8)(2.44,2.6)\psline{->}(1.87,5.075)(1.925,4.84)\psline(2.44,2.6)(1.5,3.258)\psline{->}(1.95,2.95)(1.62,3.16)
  \psline(1.2,8)(2.92,2.84)\psline{->}(2.14,5.15)(2.206,4.95)\psline(2.92,2.84)(3.0,6.2)\psline{->}(2.96,4.52)(2.98,4.8)
  \psline(1.2,8)(0.53,1.44)\psline{->}(0.902,5.01)(0.865,4.71)\psline(0.588,1.44)(4.18,7.70)\psline{->}(3.5,6.53)(3.6,6.68)
  \psline(1.2,8)(0.43,3.18)\psline{->}(0.726,5.04)(0.69,4.74)\psline(0.43,3.18)(6.95,3.07)\psline{->}(6.2,3.091)(6.25,3.09)
  \psline(1.2,8)(0.14,3.43)\psline{->}(0.53,5.075)(0.475,4.84)\psline(0.14,3.43)(0.5,4.5)\psline{->}(0.53,5.075)(0.475,4.84)
  \psline(1.2,8)(-0.34,3.32)\psline{->}(0.26,5.15)(0.194,4.95)\psline(-0.38,3.32)(-1.5,2.8)\psline{->}(-1.23,2.93)(-1.3,2.90)
%  \uput[90](-0.2,6.6){$\cdots$}
  \psset{dotstyle=*,dotangle=0,dotsize=1pt}
  \psdots(1.3,5.036)(1.425,5.036)(1.55,5.036)
%  \psdots(-0.15,7)(0,7)(0.15,7)
%  \psdots(1.6,7)(1.75,7)(1.9,7)
%  \psdots(2.85,7)(3,7)(3.15,7)
  \psset{dotstyle=*,dotangle=0,dotsize=3pt}
  \psdots(1.2,8)
  \uput[0](-1.0,8.5){Light source}
% the light and its reflection
%  \psline(-0.5,8)(-0.5,1.4)\psline{->}(-0.5,7)(-0.5,6.6)\psline(-0.5,1.4)(-2.3,5)\psline{->}(-1.4,3.2)(-1.6,3.55)
%  \psline(-0.3,8)(-0.3,3.4)\psline{->}(-0.3,7)(-0.3,6.6)\psline(-0.3,3.4)(-2.3,4.4)\psline{->}(-1.3,3.9)(-1.5,4.0)
%  \psline(0.3,8)(0.3,3.4)\psline{->}(0.3,7)(0.3,6.6)\psline(0.3,3.4)(5.3,5.8)\psline{->}(2.8,4.6)(3,4.7)
%  \psline(0.6,8)(0.6,1.4)\psline{->}(0.6,7)(0.6,6.6)\psline(0.6,1.4)(5.5,5.5)\psline{->}(3.05,3.45)(3.2,3.6)
%  \psline(1,8)(1,1.2)\psline{->}(1,7)(1,6.6)\psline(1,1.2)(2.2,1.8)\psline{->}(1.6,1.5)(1.8,1.6)
%  \psline(2.5,8)(2.5,2.7)\psline{->}(2.5,7)(2.5,6.6)\psline(2.5,2.7)(1.4,3.3)\psline{->}(1.95,3)(1.7,3.15)
%  \psline(3.5,8)(3.5,2.7)\psline{->}(3.5,7)(3.5,6.6)\psline(3.5,2.7)(6.3,4.2)\psline{->}(4.9,3.45)(5.1,3.55)
%  \uput[90](-0.2,6.6){$\cdots$}
%  \psset{dotstyle=*,dotangle=0,dotsize=1pt}
%  \psdots(-0.15,7)(0,7)(0.15,7)
%  \psdots(1.6,7)(1.75,7)(1.9,7)
%  \psdots(2.85,7)(3,7)(3.15,7)
%  \uput[0](-1,8.5){Incoming light}
% the image ''on" the image plane
\end{pspicture}}
\subfigure[transmission imaging with parallel
beam]{\psset{xunit=0.5cm,yunit=0.5cm,linewidth=0.19mm}
\begin{pspicture}(-5,-3)(6,10)
% the objects
  \psset{linecolor=black}
  \qdisk(0,0){22pt}
  \psset{linecolor=white}
  \qdisk(0.5,0.5){6pt}
  \uput[0](-1.7,-1.7){$o_1$}
  \psset{linecolor=black}
  \qdisk(3,2){12pt}
  \uput[0](3.6,2){$o_2$}
  \psset{linecolor=gray}
  \qdisk(0,3){7pt}
  \uput[0](-1.6,3.3){$o_3$}
% the image plane
  \psset{linecolor=black}
  \psline(-2,-2)(4.4,-2)
  \psline(-1.8,-2)(-1.6,-2.2)\psline(-1.6,-2)(-1.4,-2.2)\psline(-1.4,-2)(-1.2,-2.2)\psline(-1.2,-2)(-1,-2.2)\psline(-1,-2)(-0.8,-2.2)
  \psline(-0.8,-2)(-0.6,-2.2)\psline(-0.6,-2)(-0.4,-2.2)\psline(-0.4,-2)(-0.2,-2.2)\psline(-0.2,-2)(0,-2.2)\psline(0,-2)(0.2,-2.2)
  \psline(0.2,-2)(0.4,-2.2)\psline(0.4,-2)(0.6,-2.2)\psline(0.6,-2)(0.8,-2.2)\psline(0.8,-2)(1,-2.2)\psline(1,-2)(1.2,-2.2)\psline(1.2,-2)(1.4,-2.2)
  \psline(1.4,-2)(1.6,-2.2)\psline(1.6,-2)(1.8,-2.2)\psline(1.8,-2)(2,-2.2)\psline(2,-2)(2.2,-2.2)\psline(2.2,-2)(2.4,-2.2)\psline(2.4,-2)(2.6,-2.2)
  \psline(2.6,-2)(2.8,-2.2)\psline(2.8,-2)(3,-2.2)\psline(3,-2)(3.2,-2.2)\psline(3.2,-2)(3.4,-2.2)\psline(3.4,-2)(3.6,-2.2)\psline(3.6,-2)(3.8,-2.2)
  \psline(3.8,-2)(4,-2.2)\psline(4,-2)(4.2,-2.2)
  \uput[0](-1,-2.8){Image plane}
% the electron beam
  \psline(-0.5,8)(-0.5,-2)\psline{->}(-0.5,7)(-0.5,6.6)
  \psline(-0.3,8)(-0.3,-2)\psline{->}(-0.3,7)(-0.3,6.6)
  \psline(0.3,8)(0.3,-2)\psline{->}(0.3,7)(0.3,6.6)
  \psline(0.6,8)(0.6,-2)\psline{->}(0.6,7)(0.6,6.6)
  \psline(1,8)(1,-2)\psline{->}(1,7)(1,6.6)
  \psline(2.5,8)(2.5,-2)\psline{->}(2.5,7)(2.5,6.6)
  \psline(3.5,8)(3.5,-2)\psline{->}(3.5,7)(3.5,6.6)
%  \uput[90](-0.2,6.6){$\cdots$}
  \psset{dotstyle=*,dotangle=0,dotsize=1pt}
  \psdots(-0.15,7)(0,7)(0.15,7)
  \psdots(1.6,7)(1.75,7)(1.9,7)
  \psdots(2.85,7)(3,7)(3.15,7)
  \uput[0](-1.0,8.5){Wave source}
\end{pspicture}}
\subfigure[transmission imaging with divergent
beam]{\psset{xunit=0.5cm,yunit=0.5cm,linewidth=0.19mm}
\begin{pspicture}(-5,-3)(6,10)
% the objects
  \psset{linecolor=black}
  \qdisk(0,0){22pt}
  \psset{linecolor=white}
  \qdisk(0.5,0.5){6pt}
  \uput[0](-1.7,-1.7){$o_1$}
  \psset{linecolor=black}
  \qdisk(3,2){12pt}
  \uput[0](3.6,2){$o_2$}
  \psset{linecolor=gray}
  \qdisk(0,3){7pt}
  \uput[0](-1.6,3.3){$o_3$}
% the image plane
  \psset{linecolor=black}
  \psline(-3,-2)(5.4,-2)
  \psline(-2.8,-2)(-2.6,-2.2)\psline(-2.6,-2)(-2.4,-2.2)\psline(-2.4,-2)(-2.2,-2.2)\psline(-2.2,-2)(-2.0,-2.2)\psline(-2.0,-2)(-1.8,-2.2)
  \psline(-1.8,-2)(-1.6,-2.2)\psline(-1.6,-2)(-1.4,-2.2)\psline(-1.4,-2)(-1.2,-2.2)\psline(-1.2,-2)(-1,-2.2)\psline(-1,-2)(-0.8,-2.2)
  \psline(-0.8,-2)(-0.6,-2.2)\psline(-0.6,-2)(-0.4,-2.2)\psline(-0.4,-2)(-0.2,-2.2)\psline(-0.2,-2)(0,-2.2)\psline(0,-2)(0.2,-2.2)
  \psline(0.2,-2)(0.4,-2.2)\psline(0.4,-2)(0.6,-2.2)\psline(0.6,-2)(0.8,-2.2)\psline(0.8,-2)(1,-2.2)\psline(1,-2)(1.2,-2.2)\psline(1.2,-2)(1.4,-2.2)
  \psline(1.4,-2)(1.6,-2.2)\psline(1.6,-2)(1.8,-2.2)\psline(1.8,-2)(2,-2.2)\psline(2,-2)(2.2,-2.2)\psline(2.2,-2)(2.4,-2.2)\psline(2.4,-2)(2.6,-2.2)
  \psline(2.6,-2)(2.8,-2.2)\psline(2.8,-2)(3,-2.2)\psline(3,-2)(3.2,-2.2)\psline(3.2,-2)(3.4,-2.2)\psline(3.4,-2)(3.6,-2.2)\psline(3.6,-2)(3.8,-2.2)
  \psline(3.8,-2)(4,-2.2)\psline(4,-2)(4.2,-2.2)\psline(4.2,-2)(4.4,-2.2)\psline(4.4,-2)(4.6,-2.2)\psline(4.6,-2)(4.8,-2.2)\psline(4.8,-2)(5.0,-2.2)
  \psline(5.0,-2)(5.2,-2.2)
  \uput[0](-1,-2.8){Image plane}
% the electron beam
  \psline(1.2,8)(1.2,-2)\psline{->}(1.2,5)(1.2,4.6)
%  \psline(1.2,8)(2.2,-2)\psline{->}(1.498,5.01)(1.535,4.71)
  \psline(1.2,8)(2.8,-2)\psline{->}(1.674,5.04)(1.71,4.74)
  \psline(1.2,8)(3.5,-2)\psline{->}(1.87,5.075)(1.925,4.84)
  \psline(1.2,8)(4.5,-2)\psline{->}(2.14,5.15)(2.206,4.95)
  \psline(1.2,8)(0.2,-2)\psline{->}(0.902,5.01)(0.865,4.71)
  \psline(1.2,8)(-0.4,-2)\psline{->}(0.726,5.04)(0.69,4.74)
  \psline(1.2,8)(-1.1,-2)\psline{->}(0.53,5.075)(0.475,4.84)
  \psline(1.2,8)(-2.1,-2)\psline{->}(0.26,5.15)(0.194,4.95)
%  \uput[90](-0.2,6.6){$\cdots$}
  \psset{dotstyle=*,dotangle=0,dotsize=1pt}
  \psdots(1.3,5.036)(1.425,5.036)(1.55,5.036)
%  \psdots(-0.15,7)(0,7)(0.15,7)
%  \psdots(1.6,7)(1.75,7)(1.9,7)
%  \psdots(2.85,7)(3,7)(3.15,7)
  \psset{dotstyle=*,dotangle=0,dotsize=3pt}
  \psdots(1.2,8)
  \uput[0](-1.0,8.5){Wave source}
\end{pspicture}}
\caption{\label{fig-RefTranImaging} A simple illustration of
reflection and transmission imaging}
\end{center}
\end{figure}

Images usually contain various degradations such as noises due to some reasons such as the non-perfectness of the imaging procedure and network transmission. For instances, images by reflection imaging often contain some Gaussian noises and blur effects, while images by transmission imaging are often contaminated by Poisson noises. As mentioned above, for transmission imaging, we have to reconstruct the objects from the 2D images, and the noise in 2D projection images will affect the accuracy of the reconstruction methods.  

The strategies for noise reduction in transmission imaging can be divided into three groups: pre-reconstruction denoising, post-reconstruction denoising, and regularized iterative reconstruction methods with many forward and backward projection steps. There are many pre-reconstruction denoising methods developed which operate on the raw projection data (transmission imaging) before image reconstruction~\cite{Schaap08,Borsdorf08,Wang2006,Riviere06,Shtok11}.  Post-reconstruction processing includes the methods for improving the image quality without affecting spatial resolution. However the artifacts in the reconstructed images are always recognized as structures in the scanned object and will be enhanced. Regularized iterative reconstruction methods have demonstrated superior performance in undersampled tomography imaging. These methods have demonstrated their tremendous power in image reconstruction with only a few projections~\cite{Brune09,Sidky06,Sidky12,Yan13}. However, these methods are seldom used for commercial purpose. For example, the traditional filtered back-projection is still mainly used for image reconstruction by commercial CT scanners because of  several reasons including the speed and image quality~\cite{Pan09}. Many new iterative algorithms for CT reconstruction introduced by major CT manufacturers are still using pre-reconstruction processing, combing with post-reconstruction processing with only one step of backward projection, e.g. iterative reconstruction in image space (IRIS) and sinogram affirmed iterative reconstruction (SAFIRE) by Siemens Medical Solutions,  adaptive iterative dose reduction (AIDR 3D) by Toshiba Medical Systems, iDose by Philips Healthcare~\cite{Willemink13}. In all these algorithms, the pre-reconstruction processing is very important in noise reduction, and understanding the properties of transmission imaging is helpful to noise reduction in projection data. 

Image processing methods often assume some prior knowledge about the image data. These prior knowledge describes the features of the images without any degradation, e.g., how to model the clean images. As well known, images by reflection imaging can be usually modeled as discontinuous functions and even piecewise constant functions in most cases. See \cite{Wu13} for an analysis from physical principle for the case of parallel light imaging (see Fig.~\ref{fig-RefTranImaging} (a)), which can also be applied to the case of divergent light imaging (see Fig. \ref{fig-RefTranImaging}(b)) and other reflection imaging cases. Consequently most of images by reflection imaging have sparse gradients. This is a very important property, based on which many image processing and segmentation techniques, models, and algorithms have been proposed in the literature, such as the popular total variation (TV) regularization \cite{Rudin92}.

In the following we will focus on transmission imaging to understand the properties of the transmission imaging. There are typically two types of wave beam geometries in transmission imaging due to the application backgrounds and the ability of wave generators. See Fig.~\ref{fig-RefTranImaging}(c)(d). Transmission imaging with parallel beam geometry, such as the cryo-EM technique \cite{Frank96}, is widely applied in biological and medicine sciences to detect molecular structures. Transmission imaging with divergent beam geometry is extensively applied in medical diagnosis, such as the X-ray CT technique \cite{Kak88,Natterer01}.

For the transmission imaging with parallel beam geometry, it has been shown in \cite{Wu13} that almost all images can be modeled as continuous functions. Let us explain this in more details. In transmission imaging with parallel beam geometry, people take images (also called projections in the literature) from many different projection directions in order to reconstruct the density functions of the imaged objects. Each projection direction corresponds to one image. See Fig. \ref{fig-RefTranImaging}(c). For a fixed projection direction, the source radial a parallel wave beam and the wave beam transmits the 3D objects (such as some biological specimen) with a portion of the wave arriving at the image plane. The information recorded in the image plane is then used to infer the structure of the scene. The interaction between the wave and the objects in the scene depends on some certain density function of the objects. The image plane records the line integrals of the density function along the lines of the wave beam. In our case of $\mathbf{R}^3$, the projection directions can be regarded as points on the 2 dimensional unit sphere $S^2$. It has been proved that for almost every projection direction, the generated image is a continuous function, even if the density function of the imaged objects are discontinuous (discontinuous density functions are very common). The set of projection directions generating discontinuous images has measure zero on the unit sphere.

In the transmission imaging with divergent beam geometry as shown in Fig. \ref{fig-RefTranImaging}(d), people take images from many points in $\mathbf{R}^3$. Each point in $\mathbf{R}^3$ outside of the support of the 3D objects (such as human body to be scanned) corresponds an image. For a fixed point, the source radial a divergent wave beam and the wave beam transmits the 3D objects with a portion of the wave arriving at the image plane. Again, the information recorded in the image plane is then used to infer the structure of the scene. The interaction between the wave and the objects in the scene depends on some certain kind of density function of the objects. The image plane records the line integrals of the density function along the lines of the wave beam. Totally we can take as many images as points in $\mathbf{R}^3$ outside of the support of the 3D objects. One of the purpose of this paper is to show that almost all images are continuous functions, even if the density function of the imaged objects are discontinuous (discontinuous density functions are very common). The set of points generating discontinuous images has measure zero in $\mathbf{R}^3$.

An essential mathematical tool to describe transmission imaging is Radon transform~\cite{Radon86}. As far as we know, theoretical results on Radon Transform in the literature focus on the analysis of the imaging procedure as a mapping operator~\cite{Helgason99,Kak88,Natterer01}, e.g., the invertability of the operator. In addition, most analysis assume that the density function of the object to be imaged is a continuous or even Schwartz function all over the Euclidean space~\cite{Helgason99,Natterer01,Radon86}. Our analysis in this paper are quite different from those in the literature in two ways. Firstly, we consider the features of images (2D projections) instead of the imaging procedure. Although the central topic in transmission imaging is the reconstruction of the 3D objects from their 2D projection images, restoration of these image data (before 3D reconstruction) is also important in improving the accuracy of the reconstruction methods due to the involvement of noise and other degradations during the imaging procedure. In addition, it is easier to model the noise in projection images before reconstruction. A typical problem is how to remove the Poisson noise (which is the dominant noise in transmission imaging); see~\cite{Brune09,Chan07,Le07,Figueiredo09,Panin99,Setzer09b,Wu11,Zanella09}, etc. Studying the image features helps to choose and design better image processing models and algorithms, as mentioned before. Secondly, our analysis assume discontinuous density functions of objects, which are very common in practical applications.

In particular, we contribute the following in this paper. Firstly, we prove the continuity property of images by transmission imaging with parallel beam geometry under much weaker assumptions than those in \cite{Wu13}. The analysis in \cite{Wu13} excludes the case of cylindrical 3D objects. Our analysis here includes almost all types of 3D objects. Secondly, we prove similar continuity property of images by transmission imaging with divergent beam geometry, thus completing this kind of analysis for two typical beam geometries in transmission imaging. Thirdly, we compare two current popular image regularization techniques for images by transmission imaging, verifying our analysis and providing some information for choosing wavelet frame methods instead of the TV method for processing this kind of images.

The paper is organized as follows. In section 2, we will present our main result, i.e., the continuity analysis of images by transmission imaging. Section 3 provides some numerical results to verify our theoretical analysis. Section 4 concludes the paper.

%As a consequence of our results, most current digital image
%processing techniques which are based on the discontinuities (edges)
%of the image, may not be best for images generated via transmission
%imaging. The popular TV regularization for Poisson noise removal
%\cite{Brune09,Chan07,Le07,Figueiredo09,Panin99,Setzer09b,Wu11,Zanella09}
%can be greatly improved.

\section{Theoretical Analysis}

\subsection{Notations and the main theorems}\label{Notation}
Let $p$ be a point in $\mathbb{R}^3$, and $W$ be a vector in $\mathbb{R}^3$. We denote by $L_{_{(p,W)}}$ the half line in the direction $W$ starting from $p$, more precisely,
\begin{align*}
L_{_{(p,W)}}= \{p+tW|~ t\geq 0 \}.
\end{align*}
These straight lines will be the light beams we are going to study. 

Let $\Sigma$ be a bounded smooth surface in $\mathbb{R}^3$, especially $\Sigma\subset D$, where $D$ is a bounded convex domain in $\mathbb{R}^3$. Let $B$ be the boundary of $D$, in this paper we also assume $B$ to be a smooth surface. We are going to study the three type of beams in this paper,
\begin{enumerate}
\item The diverse beams with source located on $B$;
\item The diverse beams with source located in $\mathbb{R}^3$;
\item The parallel beams with directions on the unit sphere $S^2$.
\end{enumerate}

When we are using these beams to scan the surface $\Sigma$, it is possible to have ``singular sources". In fact if for a source point $q$, there exists a direction $W$, such that the intersubsection of the half line $L_{_{(q, W)}}$ and the surface $\Sigma$ includes a interval on $L_{_{(q, W)}}$, then we call this point $q$ to be a singular source. More precisely we give the following definition. 
\begin{definition}
Let $q$ be a point in $\mathbb{R}^3$ and $W$ be a direction in $\mathbb{R}^3$. If there is point $p$ in $\Sigma$ such that $p\in L_{_{(q, W)}}$ and $p+tW\in \Sigma$ if $t\in (0,\epsilon)$ for some positive $\epsilon$. Then $q$ is a singular source, $W$ is as singular direction, $L_{_{(q, W)}}$ is a singular beam and $p$ is a singular point of the beam $L_{_{(q, W)}}$. We also say the beam $L_{_{(q, W)}}$ is singular at point $p$.
\end{definition}
Especially, we specify three singular sets according to the three types of beams.
\begin{enumerate}
\item All the singular sources on $B$, 

$\mathcal{Z}=\{q\in B:\, \exists W\in \mathbb{R}^3$ such that $L_{_{(q, W)}}$ is a singular beam $\}$. 

This is the singular sources set for the diverse beams with source located on $B$.
\item All the singular sources in $\mathbb{R}^{3}$, 

$\tilde{\mathcal{Z}}=\{q\in \mathbb{R}^3:\, \exists W\in \mathbb{R}^3$ such that $L_{_{(q, W)}}$ is a singular beam $\}$. 

This is the singular sources set for the diverse beams with source located in $\mathbb{R}^3$. Especially $\mathcal{Z}=\tilde{\mathcal{Z}}\cap B$.
\item All the singular directions, 

$\mathcal{D}=\{W\in S^2:\, \exists q\in\mathbb{R}^3$ such that $L_{_{(q, W)}}$ is a singular beam $\}$. 

This is the singular directions set for the parallel beams with directions on $S^2$.
\end{enumerate}

These singular sources sets and singular directions set are determined by the surface $\Sigma$. We define $\mathcal{X}$ to be all the singular points on $\Sigma$. Precisely speaking, that is
\begin{align*}
\mathcal{X}=\{p\in \Sigma|~ \exists W\in T_p\Sigma ~\text{such}~\text{that}~ L_{_{(p, W)}}~\text{is}~\text{singular}~\text{at}~p\}.
\end{align*} 
One may notice that a singular point is also a singular source. The property of $\mathcal{X}$ is very important in the study of $\mathcal{Z}$, $\tilde{\mathcal{Z}}$ and $\mathcal{D}$. 

\begin{lemma}\label{singular points}
For any point $q\in \tilde{\mathcal{Z}}$, there is $p\in\mathcal{X}$, $W\in T_p\Sigma$, such that $L_{_{(p, W)}}$ is singular at point $p$ and $q\in L_{_{(p, W)}}$. And For any unit direction $W\in \mathcal{D}$, there is a point $p\in \mathcal{X}$, such that $L_{_{(p, W)}}$ is singular at point $p$.
\end{lemma}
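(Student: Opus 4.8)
The plan is to derive both claims from one elementary observation about grazing beams, after which everything else is just unwinding the definitions; I expect the only non-bookkeeping step to be a standard tangency fact for curves in a submanifold. The observation is this. Suppose $L_{(q,V)}$ is singular at a point $p'$, so that there is $\epsilon>0$ with $p'\in\Sigma$ and $p'+tV\in\Sigma$ for all $t\in(0,\epsilon)$. Then the affine curve $\gamma(t)=p'+tV$ maps $(0,\epsilon)$ into $\Sigma$ and is $C^\infty$; since $\Sigma$ is a smooth embedded surface, $V=\gamma'(\tau)\in T_{\gamma(\tau)}\Sigma$ for every $\tau\in(0,\epsilon)$. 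Fix such a $\tau$ and set $p=p'+\tau V$. Reparametrising, $p+tV=p'+(\tau+t)V\in\Sigma$ for $t\in(0,\epsilon-\tau)$ and $p+t(-V)=p'+(\tau-t)V\in\Sigma$ for $t\in(0,\tau)$; combined with $\pm V\in T_p\Sigma$ and $p\in\Sigma$, this says that both $L_{(p,V)}$ and $L_{(p,-V)}$ are singular at $p$, so in particular $p\in\mathcal{X}$.

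The parallel case (second assertion) then follows immediately: given a unit direction $W\in\mathcal{D}$, pick a source $q$ and a point $p'$ at which $L_{(q,W)}$ is singular, apply the observation with $V=W$, and take the resulting point $p=p'+\tau W$; then $p\in\mathcal{X}$ and $L_{(p,W)}$ is singular at $p$. Nothing more is needed here because the beam direction is common to the witness and to the conclusion.

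The divergent case (first assertion) requires one extra move, since the source $q$ typically lies strictly behind its grazing point on the half-line, so $q\notin L_{(p',V)}$. I would fix this by reversing orientation along the grazing segment. Given $q\in\tilde{\mathcal{Z}}$, choose a direction $V$ and a point $p'$ with $L_{(q,V)}$ singular at $p'$, and write $p'=q+t_0V$ with $t_0\ge 0$. Applying the observation gives $p=p'+\tau V\in\mathcal{X}$ with $-V\in T_p\Sigma$ and $L_{(p,-V)}$ singular at $p$; moreover $q=p-(\tau+t_0)V=p+(\tau+t_0)(-V)$ with $\tau+t_0\ge 0$, so $q\in L_{(p,-V)}$. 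Setting $W=-V$ completes the proof. (One may assume $V$ is a unit vector, rescaling the parameter $t$ if necessary.)

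The main obstacle is really just the tangency claim $V\in T_{\gamma(\tau)}\Sigma$ used in the observation; this is the standard fact that the velocity of a $C^1$ curve whose image lies in an embedded submanifold belongs to the tangent space (write $\Sigma$ locally as a regular level set $F=0$ and differentiate $F\circ\gamma\equiv 0$), and it is the one place the smoothness hypothesis on $\Sigma$ enters. It remains valid verbatim if $\Sigma$ is allowed to have a boundary, since $T_p\Sigma$ is still the full tangent plane there. Everything else is a direct check against the definitions of singular beam, $\mathcal{X}$, $\mathcal{D}$, and $\tilde{\mathcal{Z}}$.
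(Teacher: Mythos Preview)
Your proof is correct and is essentially the paper's argument: for the divergent case both of you move to an interior point of the grazing segment (the paper simply picks the midpoint $p_1+(\epsilon/2)V$) and reverse direction to $W=-V$, and for the parallel case the grazing point itself already witnesses $p\in\mathcal{X}$. Your write-up is in fact more careful than the paper's in spelling out the tangency step $V\in T_p\Sigma$, which the paper leaves implicit.
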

\begin{proof}
The proof is very strait forward. For the first part, if $q\in \tilde{\mathcal{Z}}$ and $L_{_{(q, V)}}$ is singular at point $p_1\in \Sigma$, then there exists $\epsilon>0$ such that $p_1+tV\in\Sigma$ for any $t\in (0, \epsilon)$. Let $p=p_1+\epsilon/2 V$ and $W=-V$, then $L_{_{(p, W)}}$ is singular at point $p$ and $q\in L_{_{(p, W)}}$. For the second part, if $W\in \mathcal{D}$, then there is a $p\in\Sigma$, such that $L_{_{(p, W)}}$ is singular at $p$. Therefore $p\in \mathcal{X}$.
\end{proof}

We note that the singular points set $\mathcal{X}$ can be open on $\Sigma$. For example if $\Sigma$ is a ruled surface, then every point on $\Sigma$ is a singular point, i.e. $\mathcal{X}=\Sigma$. But the singular sources sets $\mathcal{Z}$, $\tilde{\mathcal{Z}}$ and the singular directions set $\mathcal{D}$ are more restrictive. More precisely, we state the main theorem of this paper.
 
\begin{theorem}\label{main}
For each type of beams, the singular sources set or singular directions set have measure zero as a subset of the corresponding ambient set. More precisely
\begin{enumerate}
\item $\mathcal{Z}$ has measure zero in $B$;
\item $\tilde{\mathcal{Z}}$ has measure zero in $\mathbb{R}^3$;
\item $\mathcal{D}$ has measure zero in $S^2$.
\end{enumerate}
\end{theorem}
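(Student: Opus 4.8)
The plan is to reduce all three statements to a single fact about the geometry of $\Sigma$: \emph{the union of every straight segment contained in $\Sigma$, extended to a full line, has Lebesgue measure zero in $\mathbb{R}^3$}. Using Lemma~\ref{singular points}, if $q\in\tilde{\mathcal Z}$ then $q$ lies on a beam $L_{(p,W)}$ that is singular at some $p\in\mathcal X$, i.e. $p+tW\in\Sigma$ for $t\in[0,\epsilon)$; replacing $p$ by $p+\tfrac{\epsilon}{2}W$ shows that $q$ lies on a line $\ell$ for which $\ell\cap\Sigma$ contains a nondegenerate segment. Hence $\tilde{\mathcal Z}\subseteq E:=\bigcup_{\ell\in\mathcal L}\ell$, where $\mathcal L$ denotes the set of all lines in $\mathbb{R}^3$ containing a nondegenerate segment of $\Sigma$; by the same lemma $\mathcal D$ is contained in the set of directions of the lines in $\mathcal L$, and since $\mathcal Z=\tilde{\mathcal Z}\cap B$ we have $\mathcal Z\subseteq E\cap B$. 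So it suffices to understand the family $\mathcal L$.

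Next I would split $\Sigma$ according to the second fundamental form $\mathrm{II}$. Let $\Sigma_{\mathrm{fl}}$ be the open set of points possessing a neighbourhood in $\Sigma$ on which $\mathrm{II}\equiv 0$; on each connected component the Gauss map is then constant, so the component lies in an affine plane, and since $\Sigma$ is second countable there are only countably many such components, lying in planes $\Pi_1,\Pi_2,\dots$. Every line of $\mathcal L$ whose segment meets $\Sigma_{\mathrm{fl}}$ is contained in one of the $\Pi_j$, because a segment lying in $\Sigma$ near a planar point lies in that plane and a line with a segment in a plane lies in that plane. On the other hand, if a line $\ell\in\mathcal L$ has a segment meeting $\Sigma\setminus\Sigma_{\mathrm{fl}}$ at a point $x$ with $\mathrm{II}_x\ne 0$, then, writing $\Sigma$ locally as a graph over $T_x\Sigma$, the graphing function restricted to the segment is affine, so its second derivative vanishes along the segment; equivalently the direction of $\ell$ is an asymptotic direction of $\Sigma$ at each point of the segment, and at a point with $\mathrm{II}\ne 0$ there are at most two such directions. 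Lifting each maximal segment to the unit tangent bundle $T_1\Sigma$ places it inside $\mathcal A=\{(x,W)\in T_1\Sigma:\mathrm{II}_x(W,W)=0\}$, which has finite fibres over $\{\mathrm{II}\ne 0\}$ and is there two-dimensional; since distinct maximal segments meet in at most one point, away from $\Sigma_{\mathrm{fl}}$ the family $\mathcal L$ is at most a one-parameter family of lines.

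Then I would assemble the three conclusions. The flat part of $E$ is contained in $\bigcup_j\Pi_j$, a countable union of planes; hence it has measure zero in $\mathbb{R}^3$, its intersection with $B$ lies in $\bigcup_j(\Pi_j\cap B)$, a countable union of curves in the surface $B$, so it has two-dimensional measure zero in $B$, and the directions it contributes lie in $\bigcup_j(\Pi_j\cap S^2)$, a countable union of great circles, of measure zero in $S^2$. The non-flat part of $E$ is swept out by an at most one-parameter family of lines, hence is the image of an at most two-dimensional set under a smooth map into $\mathbb{R}^3$ and so is Lebesgue null; each of its lines meets $B$ in at most two points because $D$ is convex, so this part meets $B$ in an at most one-dimensional set; and the directions it contributes form an at most one-dimensional subset of $S^2$. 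Adding the two parts gives $|\tilde{\mathcal Z}|=0$ in $\mathbb{R}^3$, $\mathcal Z$ of measure zero in $B$, and $\mathcal D$ of measure zero in $S^2$.

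The delicate point, and the step I expect to be the real work, is making the ``at most one-parameter'' assertion precise: one must exhibit an honest rectifiable, one-dimensional parametrization of the maximal segments through non-flat points so that Sard's theorem (or the elementary fact that the smooth image of a set of Hausdorff dimension $<n$ is Lebesgue null in $\mathbb{R}^n$) applies. This requires controlling $\mathcal A$ and the segments inside it near the parabolic locus $\{K=0,\ \mathrm{II}\ne 0\}$ and near the boundary of $\Sigma_{\mathrm{fl}}$, where the asymptotic line field may fail to be smooth or to be a genuine line field. A workable route is to cover $\Sigma$ by countably many coordinate charts and, on each chart, use that along a segment every directional derivative of the graphing function in the segment direction vanishes to confine the lifted segments to the zero set of a system that is nondegenerate off a lower-dimensional exceptional set, and then apply Sard chart by chart; the flat case is, by comparison, immediate once one notes that a surface locally planar on an open set is planar on each component of that set.
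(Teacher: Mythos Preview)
Your dichotomy has a real gap: a segment of $\Sigma$ can lie entirely in $\Sigma_0\setminus\Sigma_{\mathrm{fl}}$, i.e.\ have $\mathrm{II}=0$ at each of its points while no neighbourhood of any of those points is planar. Such a line falls into neither of your two cases, and $\Sigma_0\setminus\Sigma_{\mathrm{fl}}$ need not be lower-dimensional. For instance, if $\Sigma$ is the graph $z=f(x)$ with $f$ smooth and $f''$ vanishing precisely on a nowhere-dense set $C\subset\mathbb{R}$ of positive Lebesgue measure, then $\Sigma_{\mathrm{fl}}=\emptyset$, while for every $x_0\in C$ the line $\{(x_0,y,f(x_0)):y\in\mathbb{R}\}$ is contained in $\Sigma$ and lies wholly in $\Sigma_0\setminus\Sigma_{\mathrm{fl}}$. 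Your closing plan to confine lifted segments to a zero set ``nondegenerate off a lower-dimensional exceptional set'' therefore cannot work as stated: the exceptional set is exactly $\{\mathrm{II}=0\}$, and it can carry positive area in $\Sigma$.

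The paper closes this gap by treating the whole pointwise zero set $\Sigma_0=\{\mathrm{II}=0\}$ (not just its interior) through the tangent bundle. One defines $\tilde g\colon T\Sigma\to\mathbb{R}^3$ by $\tilde g(p,W)=p+W$, together with analogous maps into $B$ and $S^2$; a direct computation shows that whenever $\mathrm{II}_p=0$ the image of the differential at $(p,W)$ is orthogonal to the surface normal $n_p$, so every such $(p,W)$ is a critical point regardless of whether $p$ sits in an open planar piece. Sard's theorem then makes the union of \emph{all} tangent lines through points of $\Sigma_0$ null in $\mathbb{R}^3$ (and likewise in $B$ and $S^2$), with no structural hypothesis on $\Sigma_0$ whatsoever. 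For the remaining stratum $\Sigma_1\cup\Sigma_-$ the paper does essentially what you sketch for the non-flat case, but rigorously: on the open set $\{k_1\neq k_2\}$ the asymptotic directions $W_{\alpha\beta}$ are smooth vector fields, and along any segment contained in $\Sigma$ the maps $p\mapsto L_{(p,W_{\alpha\beta}(p))}\cap B$, $p\mapsto W_{\alpha\beta}(p)/|W_{\alpha\beta}(p)|$, and $(p,t)\mapsto p+tW_{\alpha\beta}(p)$ are constant in the segment variable and hence critical there. This supplies the concrete smooth maps your ``one-parameter family'' heuristic needs before Sard can be invoked.
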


The proof of our main theorem in subsection \ref{Proof} will be based on the Sard's theorem on page 16 of \cite{milnor},
\begin{theorem}[Sard's theorem]\label{Sard}
Let $f$ be a smooth map between smooth manifolds $M$ and $N$, and $X$ be the set of critical points of $f$. Then the image of $X$ under $f$ is a set of measure zero in $N$.
\end{theorem}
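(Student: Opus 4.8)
The plan is to reduce to the Euclidean case and then argue by induction on the dimension $m$ of the source manifold. Since $M$ and $N$ are second countable, I would first cover $M$ by countably many coordinate charts and $N$ by countably many charts; because a countable union of null sets is null, and a smooth (hence locally Lipschitz) map carries null sets to null sets, it suffices to prove the statement for a smooth map $f:U\to\mathbb{R}^n$ with $U\subseteq\mathbb{R}^m$ open. Writing $C\subseteq U$ for the critical set, I would introduce the decreasing filtration $C\supseteq C_1\supseteq C_2\supseteq\cdots$, where $C_i$ consists of the points at which every partial derivative of $f$ of order $\le i$ vanishes, and decompose
\[
C=(C\setminus C_1)\cup\Big(\bigcup_{i=1}^{k-1}(C_i\setminus C_{i+1})\Big)\cup C_k
\]
for a large $k$ chosen below, then show $f$ sends each piece to a measure-zero set.

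For the first piece $C\setminus C_1$ (relevant only when $n\ge 2$, since for $n=1$ one has $C=C_1$), near any such point some first partial, say $\partial f_1/\partial x_1$, is nonzero. Straightening with the local diffeomorphism $h(x)=(f_1(x),x_2,\dots,x_m)$ turns $f$ into a map $g=f\circ h^{-1}$ preserving the first coordinate, so $g$ carries each slice $\{t\}\times\mathbb{R}^{m-1}$ into $\{t\}\times\mathbb{R}^{n-1}$; since the Jacobian of $g$ is block triangular, a point is critical for $g$ exactly when it is critical for the corresponding slice map $g^t$. The inductive hypothesis applied to each $g^t$ (a smooth map of an $(m-1)$-manifold into $\mathbb{R}^{n-1}$) makes every vertical slice of the critical-value set null, and a Fubini-type lemma for measure-zero sets then forces the whole critical-value set to be null. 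For the intermediate pieces $C_i\setminus C_{i+1}$, at such a point some $(i{+}1)$-st order partial of a component is nonzero while all lower-order ones vanish; hence some $i$-th order partial $w$ vanishes there with nonvanishing gradient, so $\{w=0\}$ is locally a smooth hypersurface $M'$ containing $C_i$, and since $C_i\subseteq C_1$ forces $df=0$ on $C_i$, every point of $C_i$ is critical for $f|_{M'}$; the inductive hypothesis applied to $f|_{M'}:M'\to\mathbb{R}^n$ finishes this piece.

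The last piece $C_k$ is handled by a direct Taylor estimate with no induction. On a cube of side $\delta$ in $U$, the vanishing of all derivatives up to order $k$ along $C_k$ gives $|f(x+h)-f(x)|\le c|h|^{k+1}$ for $x\in C_k$; subdividing the cube into $r^m$ subcubes of side $\delta/r$, each subcube meeting $C_k$ has image contained in a box of volume $O(r^{-n(k+1)})$, so the image of $C_k$ in the cube has total volume $O(r^{\,m-n(k+1)})$. Choosing $k$ with $k+1>m/n$ makes this tend to $0$ as $r\to\infty$, whence $f(C_k)$ is null. Combining the three estimates over the countable cover of $U$ and summing completes the induction, the base case $m=0$ being immediate since a $0$-dimensional source has countable image.

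I expect the first piece to be the main obstacle: it carries the induction and relies on the Fubini-type lemma asserting that a measurable subset of $\mathbb{R}\times\mathbb{R}^{n-1}$ all of whose vertical slices are null is itself null. Establishing that lemma, checking the measurability of the critical-value sets (e.g.\ by exhausting the source with compacta so that the relevant images are compact, hence Borel), and carefully verifying that criticality is preserved under the coordinate straightening and under restriction to the hypersurface $M'$ is where the real work lies; by contrast the final Taylor estimate for $C_k$ is routine.
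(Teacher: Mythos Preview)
Your proposal is correct and is essentially the classical Milnor argument. Note, however, that the paper does not supply its own proof of Sard's theorem: it merely states the result and cites page~16 of Milnor's \emph{Topology from the Differentiable Viewpoint}, using it as a black box in the proof of Theorem~\ref{mainproof}. Your write-up is precisely a sketch of that cited proof---the reduction to $f:U\subseteq\mathbb{R}^m\to\mathbb{R}^n$ via countable charts, the filtration $C\supseteq C_1\supseteq\cdots$, the three-piece decomposition, the coordinate-straightening plus Fubini argument for $C\setminus C_1$, the hypersurface restriction for $C_i\setminus C_{i+1}$, and the Taylor/cube-counting estimate for $C_k$---so there is nothing to compare beyond saying you have reproduced the referenced proof rather than anything the paper itself does.
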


To explain the critical point of a smooth map between two manifolds. We first recall that the tangent bundle $TM$ of a manifold $M$ consists the pairs $(p, W)$, where $p$ is in $M$ and $W$ is a tangent vector of $M$ at point $p$. Then $TM$ is a $2m$ dimensional manifold, where $m=\dim M$. For example if $M=\Sigma$, and $(U,\{u, v\})$ is a local coordinate chart of $\Sigma$ then the local coordinate chart for the tangent bundle $TU$ is
\begin{align*}
(u,v,a,b)\mapsto \left(p(u,v), a\frac{\partial}{\partial u}+b\frac{\partial}{\partial v}\right)
\end{align*}
where $p(u,v)$ is the point of coordinate $(u,v)$, and $a\frac{\partial}{\partial u}+b\frac{\partial}{\partial v}$ is a tangent vector in $T_p\Sigma$. In this paper, because all the surfaces are embedded in $\mathbb{R}^3$, we can also consider the tangent vectors as directions in $\mathbb{R}^3$.\\ 

A point $p\in M$ is a critical point of map $f:\,M\to N$ if the tangent map at point $p$, $f_*|_p: T_{p}M \to T_{f(p)}N$ is not onto. The image of the critical points are called critical values. In the next subsection, we are going to construct several smooth maps, such that the singular sources and singular directions are critical values of the corresponding maps.

\subsection{Construction of smooth maps for Sard's Theorem}\label{construction}

In this subsection, we characterize the singular points set $\mathcal{X}$ using the second fundamental form. Then we construct several smooth maps, of which the singular sources and singular directions are critical values. We start with a simple observation,
\begin{lemma}\label{Second form}
For any point $p\in \mathcal{X}$, there is at least one vector $W\in T_p\Sigma$ such that the directional curvature of $\Sigma$ at point $p$ on direction $W$ is zero.
\end{lemma}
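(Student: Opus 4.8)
The plan is to exhibit, for a given $p\in\mathcal{X}$, one concrete tangent direction that is \emph{asymptotic}, i.e.\ on which the second fundamental form of $\Sigma$ vanishes. By the definition of $\mathcal{X}$ there exist a nonzero $W\in T_p\Sigma$ and an $\epsilon>0$ such that the straight segment $\gamma(t):=p+tW$ lies in $\Sigma$ for every $t\in[0,\epsilon)$ (note $W\neq 0$ is forced: if $W=0$ then $L_{_{(p,W)}}=\{p\}$ contains no interval, so it could not be singular). Normalizing, we may assume $|W|=1$. Then $\gamma$ is a smooth curve on $\Sigma$ with $\gamma(0)=p$, $\gamma'(0)=W$, and, crucially, $\gamma''(t)\equiv 0$ because $\gamma$ is a line.

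Next I would invoke the standard identification of the second fundamental form with the normal component of acceleration of curves lying on $\Sigma$. Let $N$ be a smooth unit normal field of $\Sigma$ near $p$. For any curve $c$ on $\Sigma$ with $c(0)=p$, differentiating the identity $\langle c'(t),N(c(t))\rangle=0$ at $t=0$ gives $\langle c''(0),N(p)\rangle=-\langle c'(0),dN_p(c'(0))\rangle=\mathrm{II}_p(c'(0),c'(0))$; hence the directional (normal) curvature of $\Sigma$ at $p$ in the direction $c'(0)$ equals $\mathrm{II}_p(c'(0),c'(0))/\langle c'(0),c'(0)\rangle$ and depends only on that direction (Meusnier). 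Applying this to $c=\gamma$ and using $\gamma''(0)=0$ yields $\mathrm{II}_p(W,W)=\langle 0,N(p)\rangle=0$, so the directional curvature of $\Sigma$ at $p$ in the direction $W$ is zero, which is the assertion.

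There is no genuinely hard step here: the lemma is just the classical observation that a line segment contained in a surface is an asymptotic curve, so its direction is an asymptotic direction at each of its points. The only care needed is bookkeeping—confirming that $W\neq0$ is built into the notion of a singular beam, that $p+tW$ really stays on $\Sigma$ on a one-sided neighborhood of $0$ (so that $\gamma$ is an admissible curve on the surface), and that the ``directional curvature'' in the statement is precisely the normal curvature $\mathrm{II}_p(W,W)/\langle W,W\rangle$ rather than, say, a principal curvature. Once these points are settled, the computation $\gamma''\equiv 0$ closes the argument at once, and this description of $\mathcal{X}$ through a degeneracy of the quadratic form $\mathrm{II}_p$ is exactly the input needed for constructing the smooth maps of subsection~\ref{construction} to which Sard's theorem (Theorem~\ref{Sard}) will be applied.
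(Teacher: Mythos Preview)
Your proposal is correct and follows essentially the same approach as the paper: both arguments take the tangent direction $W$ coming from the definition of $\mathcal{X}$, note that the straight segment $t\mapsto p+tW$ lies on $\Sigma$, and conclude that the normal (directional) curvature in that direction vanishes because a line has zero curvature. Your version simply spells out the identity $\mathrm{II}_p(W,W)=\langle \gamma''(0),N(p)\rangle$ more explicitly than the paper does.
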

\begin{proof}
For any singular point $p\in \mathcal{X}$, there exists $\epsilon>0$ and a direction $W$ such that $p+tW\in \Sigma$ for $t\in (0,\epsilon)$. Then we have that the directional curvature of $\Sigma$ at $p$ on direction $W$ is the same as the curvature of the straight line, which is zero.
\end{proof}

The second fundamental form II is a symmetric quadratic form on the surface $\Sigma$. If $(u, v)$ is a local coordinate of $\Sigma$, then we can express the second fundamental form as
\begin{align*}
\text{II}=Ldu^2+2Mdudv+Ndv^2,
\end{align*}
or, equivalently, in a matrix form
\begin{align*}
\text{II}=\left[\begin{array}[c]{cc}
L & M\\
M & N
\end{array}
\right].
\end{align*}
Also for each point $p\in \Sigma$ and direction $W\in T_p\Sigma$, we have that the directional curvature of $\Sigma$ at $p$ on $W$ is 
$\text{II}(W, W)/ \langle W, W\rangle$. In particular the two real eigenvalues $k_1$ and $k_2$ of $\text{II}$ are the principle curvatures, $K=k_1k_2$ is the Gauss curvature and $H=k_1+k_2$ is the mean curvature. Gauss curvature and mean curvature are smooth functions on the surface $\Sigma$.

According to the second fundamental form, we divide the surface $\Sigma$ in to $\Sigma=\Sigma_+\cup \Sigma_0\cup \Sigma_1 \cup \Sigma_-$, in which
\begin{enumerate}
\item The points with positive Guass curvature $\Sigma_+=\{p\in \Sigma|~ \det\text{II}(p)>0\}$. For each point $p\in \Sigma_+$ and any $W\neq 0\in T_p\Sigma$, the directional curvature $\text{II}(W,W)/\langle W, W\rangle  \neq 0$;
\item The points with zero Guass curvature and zero mean curvature, $\Sigma_0=\{p\in \Sigma|~ \text{II}(p)=0\}$. For each point $p\in \Sigma_0$ and any vector $W\neq 0\in T_p\Sigma$, the directional curvature $\text{II}(W,W)/\langle W, W\rangle = 0$;
\item The point with zero Guass curvature and non-zero mean curvature $\Sigma_1=\{p\in \Sigma|~ \text{II}(p)\ \text{has}\ \text{rank}\ 1\}$. For each point $p\in \Sigma_1$ there is a unique unit vector $W\in T_p\Sigma$, up to sign, such that the directional curvature $\text{II}(W,W)=\text{II}(W,W)/\langle W, W\rangle  = 0$;
\item The points with negative Guass curvature, $\Sigma_-=\{p\in \Sigma|~ \det\text{II}(p)<0\}$. For each point $p\in \Sigma_1$ there are two linearly independent unit vectors $W_1, W_2\in T_p\Sigma$, such that the directional curvature $\text{II}(W_1,W_1)=\text{II}(W_1,W_1)/\langle W_1, W_1\rangle=\text{II}(W_2,W_2)/\langle W_2, W_2\rangle=\text{II}(W_2,W_2)= 0$.
\end{enumerate}

Lemma \ref{Second form} implies that the singular points set has no intersubsection with $\Sigma_+$, on each point of which the direction curvature is always positive. So we can divide $\mathcal{X}$ to a union of three parts $\mathcal{X}=(\mathcal{X}\cap\Sigma_0)\cup(\mathcal{X}\cap(\Sigma_1\cup\Sigma_-))$. Now we are ready to define two different types of smooth maps according to these two parts of $\mathcal{X}$.

\subsubsection{Smooth maps for zero principle curvatures}
Let $T\Sigma$ be the tangent bundle of $\Sigma$. We recall that $T\Sigma$ is a $4$-dimensional manifold. We define the following maps
\begin{enumerate}
\item $g:\,T\Sigma\to B$ such that $g(p,W)=L_{_{(p,W)}}\cap B$;
\item $\tilde{g}:\,T\Sigma\to \mathbb{R}^3$ such that $g(p,W)=p+W$;
\item $g^D:\,T\Sigma\to S^2$ such that $g(p, W)= W/|W|$.
\end{enumerate}

Then we have
\begin{proposition}\label{g}
The maps $g$, $\tilde{g}$ and $g^D$ are smooth maps. For any $p\in \Sigma_0\cap\mathcal{X}$ and $W\in T_p\Sigma$, the point $(p, W)\in T\Sigma$ is a critical point of all the three maps $g$, $\tilde{g}$ and $g^D$.
\end{proposition}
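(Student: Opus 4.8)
The plan is to get smoothness directly from the defining formulas and to get the critical-point property from one computation with the second fundamental form. For $\tilde g$ nothing is needed: under the embedding $T\Sigma\hookrightarrow\mathbb{R}^3\times\mathbb{R}^3$ it is the restriction of the addition map $(x,y)\mapsto x+y$, and $g^D$ is likewise the restriction of $(x,y)\mapsto y/|y|$, which is smooth off the zero section. For $g$ I would invoke the implicit function theorem. Choose a smooth $\phi\colon\mathbb{R}^3\to\mathbb{R}$ with $D=\{\phi<0\}$, $B=\{\phi=0\}$ and $\nabla\phi\ne0$ on $B$. For $p\in\Sigma\subset D$ and $W\ne0$, convexity of $D$ makes $L_{_{(p,W)}}$ meet $B$ in exactly one point $q=p+t_0W$ with $t_0>0$, and the derivative of $t\mapsto\phi(p+tW)$ at $t_0$ equals $\langle\nabla\phi(q),W\rangle$. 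The point of this step is to see this derivative does not vanish: if it did, then $W\in T_qB$, so the whole line through $p$ in the direction $W$ would lie in the affine tangent plane $q+T_qB$, which is a supporting hyperplane of $\overline D$ and so cannot contain the interior point $p$. Hence the derivative is nonzero, $t_0=t_0(p,W)$ is a smooth function, and $g(p,W)=p+t_0(p,W)\,W$ is smooth.

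For the critical-point claim, fix $p\in\Sigma_0\cap\mathcal X$ and $W\in T_p\Sigma$ with $W\ne0$ (in fact only $p\in\Sigma_0$ will be used), pick a local parametrization $\mathbf r(u,v)$ of $\Sigma$ near $p$, and use the induced chart $(u,v,a,b)\mapsto(\mathbf r(u,v),\,a\mathbf r_u+b\mathbf r_v)$ of $T\Sigma$. The vector component $V(u,v,a,b)=a\mathbf r_u+b\mathbf r_v$ has partials $\partial_aV=\mathbf r_u$, $\partial_bV=\mathbf r_v$, $\partial_uV=a\mathbf r_{uu}+b\mathbf r_{uv}$, $\partial_vV=a\mathbf r_{uv}+b\mathbf r_{vv}$. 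Writing $\mathbf n$ for the unit normal, the normal parts of $\mathbf r_{uu},\mathbf r_{uv},\mathbf r_{vv}$ are $L\mathbf n,M\mathbf n,N\mathbf n$, so the normal parts of $\partial_uV$ and $\partial_vV$ are $(aL+bM)\mathbf n$ and $(aM+bN)\mathbf n$. The single fact that does all the work is that $p\in\Sigma_0$ forces $L=M=N=0$ there, so at $(p,W)$ every partial derivative of $V$ — hence the whole image of $dV$ — lies in $T_p\Sigma=\mathrm{span}\{\mathbf r_u,\mathbf r_v\}$.

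The three conclusions then follow by comparing dimensions with this $2$-plane. For $\tilde g=\mathbf r\circ\pi+V$ (with $\pi$ the bundle projection), $\mathrm{im}\,d\tilde g|_{(p,W)}\subseteq\mathrm{span}\{\mathbf r_u,\mathbf r_v\}+\mathrm{im}\,dV=T_p\Sigma$, a plane in the $3$-dimensional $T_{\tilde g(p,W)}\mathbb{R}^3$. For $g^D=V/|V|$, the differential at $(p,W)$ is $|W|^{-1}$ times the orthogonal projection of $dV$ onto $W^\perp$, and since $\mathrm{im}\,dV\subseteq T_p\Sigma$ and $W\in T_p\Sigma$, its image lies in the line $T_p\Sigma\cap W^\perp$, which is proper in $T_{g^D(p,W)}S^2\cong W^\perp$. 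For $g=\mathbf r\circ\pi+t\,V$, at $(p,W)$ the differential is the sum of a map into $T_p\Sigma$ (the base point part), a multiple of $W\in T_p\Sigma$ (the $dt$ part), and $t_0\,dV$ with image in $T_p\Sigma$, so $\mathrm{im}\,dg|_{(p,W)}\subseteq T_p\Sigma$; but $g$ takes values in $B$, so this image is also in $T_qB$, $q=g(p,W)$. The remaining geometric input — the only place besides smoothness where convexity is used — is that the two planes $T_p\Sigma$ and $T_qB$ are distinct: if they were equal, then $q-p\in T_p\Sigma=T_qB$ would put the interior point $p$ on the supporting hyperplane $q+T_qB$ of $\overline D$, impossible. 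Hence $T_p\Sigma\cap T_qB$ is at most a line, $dg|_{(p,W)}$ is not onto the $2$-plane $T_qB$, and $(p,W)$ is critical for $g$ too.
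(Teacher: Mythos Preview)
Your proof is correct and follows essentially the same route as the paper's: smoothness of $g$ via the implicit function theorem together with convexity of $D$, and the critical-point claim by showing that the vanishing of $L,M,N$ at $p\in\Sigma_0$ forces the image of the differential to lie in $T_p\Sigma$, then using convexity again to see $T_p\Sigma\neq T_qB$. Your phrasing in terms of the image being contained in the $2$-plane $T_p\Sigma$ is a slightly more invariant repackaging of the paper's computation that all partials are orthogonal to $n_p$, but the substance is identical.
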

\begin{proof}
We fix one pint $p_0\in \Sigma_0\cap\mathcal{X}$. For the computation of the tangent map, let $\{U, (u, v)\}$ be a local coordinate chart around $p_0$, and $(u,v,a,b)$ gives a local coordinate system of $TU$ by
\begin{align*}
(u,v,a,b)\mapsto \left(p(u,v), a\frac{\partial p(u,v)}{\partial u}+b\frac{\partial p(u,v)}{\partial v}\right),
\end{align*}
in which $p(u,v)$ is a local parametrizaion of $\Sigma$ with $p_0=p(0,0)$ and $a\frac{\partial p}{\partial u}+b\frac{\partial p}{\partial v}$ is a vector in $T_p\Sigma$. 

We first look at $g:\, T\Sigma\to B$. According to the local coordinate on $TU$, we have
\begin{align*}
g(u,v,a,b)=p(u,v)+r(u,v,a,b)\cdot\left(a\frac{\partial p(u,v)}{\partial u}+b\frac{\partial p(u,v)}{\partial v}\right)
\end{align*}
for a function $r$. In order to prove the smoothness of $g$, we only need to show that $r$ is a smooth function. Actually the smooth surface $B$ is locally defined as zero set of a smooth function $F(x,y,z)=0$. So we have the function $r$ is actually the solution of the following equation,
$$F\left(p(u,v)+r\cdot(a\frac{\partial p(u,v)}{\partial u}+b\frac{\partial p(u,v)}{\partial v})\right)=0.$$
From our assumption that the domain $D$ is convex, we know that the vector $a\frac{\partial p(u,v)}{\partial u}+b\frac{\partial p(u,v)}{\partial v}$ is not tangent to $B$ at point $g(u,v,a,b)$, therefore the partial derivative $\frac{\partial F}{\partial r}\neq 0$. Then using the inverse function theorem, we have that the solution $r(u,v,a,b)$ is a smooth function on $T\Sigma$. 

We are ready to compute the tangent map of $g$ at $(u,v,a,b)$, in fact
\begin{align*}
g_u&=p_u+r_u(ap_u+bp_v)+r(ap_{uu}+bp_{uv});\\
g_v&=p_v+r_v(ap_u+bp_v)+r(ap_{vu}+bp_{vv});\\
g_a&=r_a(ap_u+bp_v)+rp_{u};\\
g_b&=r_b(ap_u+bp_v)+rp_{v};\\
\end{align*}
Let $n_p$ be the normal vector of $\Sigma$ at point $p(u,v)$, then 
\begin{align*}
\langle n_p,g_u\rangle |_{p_0}&=r(a\langle n_p,p_{uu}\rangle+b\langle n_p,p_{uv}\rangle)|_{p_0}=r(aL+bM)|_{p_0}=0;\\
\langle n_p,g_v\rangle |_{p_0}&=r(a\langle n_p,p_{vu}\rangle+b\langle n_p,p_{vv}\rangle)|_{p_0}=r(aM+bN)|_{p_0}=0;\\
\langle n_p,g_a\rangle |_{p_0}&=0;\\
\langle n_p,g_b\rangle |_{p_0}&=0.
\end{align*}

This implies that the image of tangent map $g_*$ at point $(p_0, W)$ is perpendicular to $n_{p_0}$. We claim that $n_{p_0}$ is not parallel to the normal vector $n_{g(p_0,W)}$ of $B$ at point $g(p_0,W)$. Then there exists a vector $V$ in $T_{g(p_0,W)}B$ such that $V$ is not perpendicular to $n_{p_0}$, therefore the tangent map $g_*$ at the point $(p_0, W)$ is not onto. We prove the claim by contradiction, suppose $n_{p_0}$ is parallel to $n_{g(p_{0},W)}$, then the half line $L_{_{(p_0,W)}}$ will be tangent to the surface $B$ at point $g(p_0, W)$, but this implies that $L_{_{(p_0,W)}}$ stays outside of the convex domain $D$, which contradicts with the fact that $p_0\in \Sigma\subset D$. We finished the proof of that $\Sigma_0\cap\mathcal{X}$ are critical points of the map $g$.

Secondly, we consider the map $\tilde{g}:\,T\Sigma\to \mathbb{R}^3$. Using the above notion of the local coordinate system of $T\Sigma$, we have 
\begin{align*}
\tilde{g}(u,v,a,b)=p(u,v)+a\frac{\partial p}{\partial u}+b\frac{\partial p}{\partial v},
\end{align*}
and the tangent maps are
\begin{align*}
\tilde{g}_u&=p_u+ap_{uu}+bp_{uv};\\
\tilde{g}_v&=p_v+ap_{vu}+bp_{vv};\\
\tilde{g}_a&=p_{u};\\
\tilde{g}_b&=p_{v}.
\end{align*}
Then again we get 
\begin{align*}
\langle n_p,\tilde{g}_u\rangle|_{p_0}=\langle n_p,\tilde{g}_v\rangle|_{p_0}=\langle n_p,\tilde{g}_a\rangle|_{p_0}=\langle n_p,\tilde{g}_b\rangle|_{p_0}=0.
\end{align*}
This directly implies that the tangent map $\tilde{g}_*$ to  $T\mathbb{R}^3$ is not onto.

At last, let us look at the map $g^D:\, T\Sigma\to S^2$. Using the above notion of the local coordinate system of $T\Sigma$, we have 
\begin{align*}
g^D(u,v,a,b)=W/|W|,
\end{align*}
where $W=a\frac{\partial p}{\partial u}+b\frac{\partial p}{\partial v}$. Then we can compute for the tangent map
\begin{align*}
{g^D}_u&=\frac{|W|\cdot W_u-|W|_u\cdot W}{|W|^2};\\
{g^D}_v&=\frac{|W|\cdot W_v-|W|_v\cdot W}{|W|^2};\\
{g^D}_a&=\frac{|W|\cdot W_a-|W|_a\cdot W}{|W|^2};\\
{g^D}_b&=\frac{|W|\cdot W_b-|W|_b\cdot W}{|W|^2}.
\end{align*}
Then again the vanishing second fundamental form implies that
\begin{align*}
\langle n_p,{g^D}_u\rangle |_{p_0}=\langle n_p,{g^D}_v\rangle |_{p_0}=\langle n_p,{g^D}_a\rangle |_{p_0}=\langle n_p,{g^D}_b\rangle |_{p_0}=0.
\end{align*}
And it is clear this time that $n_p$ is not parallel to $W$, therefore there is a vector $V\in T_{W/|W|}S^2$ such that, $\langle V, n_p\rangle\neq 0$. This implies that the tangent map $(g^D)_*$ at point (p,W) is not onto.
\end{proof}

\subsubsection{Smooth maps for different principle curvatures}
Let us consider the part of $\Sigma$ with different principle curvatures, precisely speaking
\begin{align*}
\Sigma_n=\{ p\in \Sigma:\,H_p^2-4K_p\neq 0\}.
\end{align*} 
In particular $\Sigma_-\cup \Sigma_1\subset \Sigma_n$. Because the Gauss curvature $K$ and the mean curvature $H$ are both smooth functions on $\Sigma$, we have that $\Sigma_n$ is an open subset of $\Sigma$. Therefore we have a countable open cover $\Sigma_n=\cup_{i\in \mathbb{N}} U_i$ such that on each $U_i$, there are two unit vector fields $V_1$ and $V_2$ corresponding to $k_1$ and $k_2$, i.e. $|V_1|=|V_2|=1$, $\text{II}(V_1, V_1)=k_1$, $\text{II}(V_2, V_2)=k_2$ and $\text{II}(V_1, V_2)=0$. Then we define on $U_i$ that $W_{\alpha\beta}=(-1)^\alpha\sqrt{|k_2|}V_1+(-1)^\beta\sqrt{|k_1|}V_2$ for $\alpha, \beta\in \{0,1\}$. 

\begin{lemma}
For $p\in (\Sigma_-\cup \Sigma_1)\cap U_i$, directional curvature of $\Sigma$ at point $p$ on direction $W_{\alpha\beta}$ equals zero, for each $\alpha, \beta\in \{0, 1\}$.
\end{lemma}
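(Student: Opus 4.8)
The plan is to compute $\text{II}(W_{\alpha\beta},W_{\alpha\beta})$ and $\langle W_{\alpha\beta},W_{\alpha\beta}\rangle$ directly, exploiting the bilinearity and symmetry of the second fundamental form together with the fact that $\{V_1,V_2\}$ is an orthonormal principal frame on $U_i$. Since on $U_i$ we have $\text{II}(V_1,V_1)=k_1$, $\text{II}(V_2,V_2)=k_2$, $\text{II}(V_1,V_2)=0$, $\langle V_i,V_j\rangle=\delta_{ij}$, and $W_{\alpha\beta}=(-1)^\alpha\sqrt{|k_2|}\,V_1+(-1)^\beta\sqrt{|k_1|}\,V_2$, expanding gives
\begin{align*}
\text{II}(W_{\alpha\beta},W_{\alpha\beta})&=|k_2|\,\text{II}(V_1,V_1)+2(-1)^{\alpha+\beta}\sqrt{|k_1||k_2|}\,\text{II}(V_1,V_2)+|k_1|\,\text{II}(V_2,V_2)=|k_2|\,k_1+|k_1|\,k_2,\\
\langle W_{\alpha\beta},W_{\alpha\beta}\rangle&=|k_2|+|k_1|,
\end{align*}
the cross terms vanishing because $V_1$ and $V_2$ are orthonormal principal directions.

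Next I would check that $|k_2|k_1+|k_1|k_2=0$ on $\Sigma_-\cup\Sigma_1$. On $\Sigma_1$ the form $\text{II}(p)$ has rank $1$, so exactly one of $k_1,k_2$ is zero and both summands vanish. On $\Sigma_-$ we have $K_p=k_1k_2<0$, so $k_1,k_2$ are nonzero of opposite sign; a short two-case inspection ($k_1>0>k_2$, then $k_1<0<k_2$) gives $|k_2|k_1+|k_1|k_2=-k_1k_2+k_1k_2=0$ each time.

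Finally, to pass from $\text{II}(W_{\alpha\beta},W_{\alpha\beta})=0$ to vanishing of the directional curvature $\text{II}(W_{\alpha\beta},W_{\alpha\beta})/\langle W_{\alpha\beta},W_{\alpha\beta}\rangle$, I need the denominator $|k_1|+|k_2|$ to be positive, equivalently $W_{\alpha\beta}\neq 0$. On $\Sigma_-$ both principal curvatures are nonzero, and on $\Sigma_1$ one of them is nonzero; in either case $|k_1|+|k_2|>0$, so the quotient is $0$ for every $\alpha,\beta\in\{0,1\}$.

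I do not expect a genuine obstacle here: the statement is essentially a one-line computation in the principal frame. The only point requiring a little care is the sign bookkeeping of $|k_j|$ on $\Sigma_-$ and the degenerate case on $\Sigma_1$ — this is precisely the reason $W_{\alpha\beta}$ is defined using $\sqrt{|k_2|}$ and $\sqrt{|k_1|}$ rather than $\sqrt{k_2}$ and $\sqrt{k_1}$, so that it is well defined and nonzero irrespective of the signs of the principal curvatures.
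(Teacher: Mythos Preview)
Your proposal is correct and follows essentially the same route as the paper: both expand $\text{II}(W_{\alpha\beta},W_{\alpha\beta})$ in the principal frame and use the sign pattern of $k_1,k_2$ on $\Sigma_-$ and $\Sigma_1$ to see the result vanishes. Your version is in fact slightly more careful, since you also verify $\langle W_{\alpha\beta},W_{\alpha\beta}\rangle=|k_1|+|k_2|>0$ so that the quotient defining the directional curvature is well defined, a point the paper leaves implicit.
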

\begin{proof}
If $p\in \Sigma_-\cap U_i$, then $K=k_1k_2<0$. We have 
$$\text{II}(W_{\alpha\beta}, W_{\alpha\beta})=|k_2|\text{II}(V_1, V_1)+|k_1|\text{II}(V_2, V_2)=|k_1|k_2+k_1|k_2|=0.$$ 
So the directional curvature on $W_{\alpha\beta}$ is $\text{II}(W_{\alpha\beta}, W_{\alpha\beta})/|W_{\alpha\beta}|^2=0$. In this case $W_{00}=-W_{11}$ and $W_{01}=-W_{10}$ but $W_{00}$ and $W_{01}$ are linearly independent.

If $p\in \Sigma_1\cap U_i$, then $K=k_1k_2=0$. With out loss of generality we assume $k_1\neq 0$ and $k_2=0$, then $W_{\alpha 0}=\sqrt{|k_1|}V_2$ and $W_{\alpha 1}=-\sqrt{|k_1|}V_2$ for each $\alpha\in\{0, 1\}$. Then clearly the directional curvature on $W_{\alpha\beta}$ equals zero. In this case $W_{\alpha 0}=-W_{\alpha 1}$ for $\alpha= 1,2$.
\end{proof}

Now we are ready to construct the following maps 
\begin{enumerate}
\item On $U_i$, let $f_{\alpha\beta}:\, U_i\to B$ such that $f_{\alpha\beta}(p)=L_{_{(p,W_{\alpha\beta})}}\cap B$.
\item On $U_i\times \mathbb{R}$, let $\tilde{f}_{\alpha\beta}:\, U_i\times \mathbb{R}\to \mathbb{R}^3$ such that $\tilde{f}_{\alpha\beta}(p, t)=p+t W_{\alpha\beta}$.
\item On $U_i$, let $f^D_{\alpha\beta}:\, U_i\to S^2$ such that $f^D_{\alpha\beta}(p)=W_{\alpha\beta}/|W_{\alpha\beta}|$.
\end{enumerate}

We note that these maps are not globally defined on $\Sigma_n$, since the principle curvatures $k_1$ and $k_2$ are not globally defined functions on $\Sigma$ and the choice of $V_1$ and $V_2$ are not unique either. Topologically speaking the collection of the pairs $(p, W_{\alpha\beta})$ gives a $4$-sheets cover space $\tilde{\Sigma_n}$ of the manifold $\Sigma_n$. Then the $4$ locally defined maps $f_{\alpha\beta}$ can be realized as one globally defined map on $\tilde{\Sigma_n}$. For the reader's convenience we decide to avoid the using of too much algebraic topology, and choose the local definition as above. 

For these maps we have the following proposition
\begin{proposition}\label{f}
If $p\in \mathcal{X}\cap U_i$, then there exist $\alpha$ and $\beta$ in $\{0, 1\}$, such that 
\begin{enumerate}
\item The point $f_{\alpha\beta}(p)$ is a critical value of $f_{\alpha\beta}$;
\item The point $\tilde{f}_{\alpha\beta}(p, t)$ is a critical value of $\tilde{f}_{\alpha\beta}$;
\item The point $f_{\alpha\beta}^D(p)$ is a critical value of $f^D_{\alpha\beta}$.
\end{enumerate}
\end{proposition}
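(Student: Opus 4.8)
The plan is to reduce all three statements to one geometric fact about the field $W_{\alpha\beta}$ restricted to the singular beam through $p$. Since $U_i\subset\Sigma_n$ and $\Sigma_n\cap\Sigma_0=\emptyset$ (a point with $\mathrm{II}=0$ has $H^2-4K=0$), and since $\mathcal{X}$ misses $\Sigma_+$ by Lemma~\ref{Second form}, any $p\in\mathcal{X}\cap U_i$ lies in $\Sigma_1\cup\Sigma_-$. By the definition of $\mathcal{X}$ there is a unit direction $e\in T_p\Sigma$ and an $\epsilon>0$ with $p+\tau e\in\Sigma$ for $\tau\in(0,\epsilon)$; set $\ell(\tau)=p+\tau e$ and shrink $\epsilon$ so that $\ell([0,\epsilon))\subset U_i$. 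Each $\ell(\tau)$ is again a singular point with $e$ a singular direction (use the sub-beam issuing from $\ell(\tau)$), hence $\ell([0,\epsilon))\subset\mathcal{X}\cap U_i\subset\Sigma_1\cup\Sigma_-$, and along $\ell$ the direction $e$ spans a zero-curvature line. By the classification recalled just above, the zero-curvature lines at a point of $\Sigma_1\cup\Sigma_-$ are exactly the (one or two) lines spanned by the $W_{\alpha\beta}$, and the four $W_{\alpha\beta}$ realize both signs of each such spanning vector; hence I may fix $(\alpha,\beta)\in\{0,1\}^2$ with $W_{\alpha\beta}(p)$ equal to a positive multiple of $e$. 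This single $(\alpha,\beta)$ is the one claimed in the proposition.

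The heart of the argument is a persistence statement: after shrinking $\epsilon$ once more, $W_{\alpha\beta}(\ell(\tau))$ is a positive multiple of $e$ for every $\tau\in[0,\epsilon)$. To prove it, write $e=s_1(\tau)V_1(\ell(\tau))+s_2(\tau)V_2(\ell(\tau))$ in the principal frame (smooth on $U_i$); the coefficients $s_1,s_2$ are smooth, and the vanishing of the normal curvature in the direction $e$ forces $k_1(\ell(\tau))s_1(\tau)^2+k_2(\ell(\tau))s_2(\tau)^2=0$ along $\ell$. Comparing with $W_{\alpha\beta}=(-1)^{\alpha}\sqrt{|k_2|}\,V_1+(-1)^{\beta}\sqrt{|k_1|}\,V_2$, one sees that whether $e$ is parallel to $W_{\alpha\beta}(\ell(\tau))$, and with which orientation, is governed by the sign of $s_1(\tau)/s_2(\tau)$ (or by $s_1(\tau)=0$), which is locally constant for small $\tau>0$ by smoothness and matches the chosen orientation at $\tau=0$ by continuity. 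I expect this persistence step to be the main obstacle: on the part of $\ell$ lying in $\Sigma_1$ the two zero-curvature line fields of $\Sigma_-$ merge into one, so the ``which line, which sign'' bookkeeping degenerates there and must be controlled through the coefficients $s_1,s_2$ rather than through the line fields directly; one also has to check that $W_{\alpha\beta}$ is smooth enough for the maps below to be differentiable, which holds on $\Sigma_n$ away from the locus where a principal curvature vanishes and is otherwise handled as in the proof of Proposition~\ref{g}.

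Granting persistence, the three conclusions are immediate. Smoothness of $f_{\alpha\beta}$, $\tilde f_{\alpha\beta}$, $f^D_{\alpha\beta}$ follows as for $g$ in Proposition~\ref{g}: $W_{\alpha\beta}$ is nonvanishing on $\Sigma_n$, and convexity of $D$ forces $W_{\alpha\beta}(p)$ to be transverse to $B$ at the exit point, so the implicit function theorem supplies a smooth exit parameter. Now for $\tau\in[0,\epsilon)$ the ray $L_{_{(\ell(\tau),W_{\alpha\beta}(\ell(\tau)))}}$ is a sub-ray of $L_{_{(p,e)}}$, so $f_{\alpha\beta}(\ell(\tau))=L_{_{(p,e)}}\cap B$ does not depend on $\tau$; since $\tau\mapsto\ell(\tau)$ is a curve in $U_i$ through $p$ with velocity a positive multiple of $W_{\alpha\beta}(p)$, this gives $(df_{\alpha\beta})_p(W_{\alpha\beta}(p))=0$, so the differential of $f_{\alpha\beta}$, a linear map between the $2$-dimensional spaces $T_pU_i$ and $T_{f_{\alpha\beta}(p)}B$ with nonzero kernel, is not onto; hence $p$ is a critical point and $f_{\alpha\beta}(p)$ a critical value. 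Likewise $f^D_{\alpha\beta}(\ell(\tau))=W_{\alpha\beta}(\ell(\tau))/|W_{\alpha\beta}(\ell(\tau))|=e$ for all such $\tau$, so $(df^D_{\alpha\beta})_p(W_{\alpha\beta}(p))=0$ and the same dimension count yields statement (3). Finally, for $\tilde f_{\alpha\beta}$ at any $(p,t)$ we have $(d\tilde f_{\alpha\beta})_{(p,t)}(W_{\alpha\beta}(p),0)=W_{\alpha\beta}(p)+t\,D_{W_{\alpha\beta}(p)}W_{\alpha\beta}$, and by persistence the directional derivative $D_{W_{\alpha\beta}(p)}W_{\alpha\beta}$ computed along $\ell$ is again parallel to $e$; hence this image vector is parallel to $W_{\alpha\beta}(p)=(d\tilde f_{\alpha\beta})_{(p,t)}(0,1)$, so $(d\tilde f_{\alpha\beta})_{(p,t)}$ is a map between $3$-dimensional spaces with nonzero kernel, therefore not onto, and $\tilde f_{\alpha\beta}(p,t)$ is a critical value for every $t$.
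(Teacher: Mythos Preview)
Your approach is essentially the paper's: fix $(\alpha,\beta)$ so that $W_{\alpha\beta}(p)$ is a positive multiple of the singular direction $e$, argue that this alignment persists along $\ell(\tau)=p+\tau e$, and then read off that $f_{\alpha\beta}$ and $f^D_{\alpha\beta}$ are constant along $\ell$ while $\tilde f_{\alpha\beta}$ has a rank defect. Your final paragraph matches the paper's computations almost line for line (the paper writes $W_{\alpha\beta}(\ell(\tau))=\lambda(\tau)e$ and exhibits the kernel vector $\lambda(0)e-(1+t\lambda'(0))\partial_t$ for $\tilde f$, which is your parallelism observation in coordinates).

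The one place where your argument is thinner than the paper's is exactly where you anticipate it. When $p\in\Sigma_1$ (say $k_2(p)=0$) one has $s_1(0)=0$, and the assertion that the sign of $s_1(\tau)/s_2(\tau)$ is ``locally constant for small $\tau>0$ by smoothness and matches the chosen orientation at $\tau=0$ by continuity'' does not close the argument: a smooth $s_1$ with $s_1(0)=0$ may change sign on every interval $(0,\epsilon')$, so no single $\alpha$ need persist on an interval containing $0$. The paper does not try to sharpen the persistence claim; instead it makes a case split that exploits the fact that only a critical \emph{value} is required. If some $\ell(\tau_0)\in\Sigma_-$, it runs the $\Sigma_-$ argument with $\ell(\tau_0)$ as base point and observes that $f_{\alpha\beta}(\ell(\tau_0))=L_{(p,e)}\cap B=f_{\alpha\beta}(p)$ (and analogously for $\tilde f$ and $f^D$, since $\beta$ is pinned by the nonvanishing $s_2$); if instead $\ell([0,\epsilon))\subset\Sigma_1$, then $s_1\equiv 0$ along $\ell$ and persistence is trivial for either $\alpha$. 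Your remark about the non-smoothness of $\sqrt{|k_i|}$ along $\Sigma_1$ is a genuine point that the paper glosses over; the same reduction to nearby $\Sigma_-$ base points is the natural way to confine the Sard argument to the open set where $W_{\alpha\beta}$ is actually smooth.
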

\begin{proof}
Because $p$ is a singular point, there exists a direction $W\in T_p\Sigma$ and $\epsilon>0$, such that $p+sW\in\Sigma$ for $s\in (0,\epsilon)$. Therefore the directional curvature of $\Sigma$ at point $p$ on the direction $W$ is zero, then there exist $\alpha, \beta \in \{0, 1\}$ such that $W_{\alpha\beta}/|W_{\alpha\beta}|= W/|W|$. Without loss of generality we assume that $W_{00}/|W_{00}|= W/|W|$. 

On the other hand, because $\Sigma_0\cap \Sigma_n=\emptyset$ and $\Sigma_+\cap\mathcal{X}=\emptyset$, the singular point $p\in \mathcal{X}\cap(\Sigma_1\cup \Sigma_-)$. We divide the problem into two cases, 
\begin{enumerate}
\item[Case 1] If $p\in \Sigma_-$, then $W_{00}=-W_{11}$ is not parallel to $W_{10}$ and $W_{01}$, which implies that $W_{\alpha\beta}/|W_{\alpha\beta}|\neq W/|W|$ in a neighbourhood of point $p$ for $(\alpha, \beta)\neq (0, 0)$. But on any point $p+sW$ with $s\in (0, \epsilon)$, we have that the directional curvature at point $p+sW$ on direction $W$ is still zero, i.e. $\text{II}_{p+sW}(W,W)=0$. Therefore $W_{00}/|W_{00}|=W/|W|$ at point $p'=p+sW$ for sufficient small $s>0$. Then we get that
\begin{align*}
f_{00}(p+sW)&=L_{_{(p+sW, W_{00})}}\cap B=L_{_{(p+sW, W)}}\cap B\\
&=L_{_{(p, W)}}\cap B=L_{_{(p, W_{00})}}\cap B= f_{00}(p),
\end{align*}
for sufficiently small $s>0$. Then we get that $\frac{\partial}{\partial s}f_{00}(p+sW)=0$, which means that the tangent map $(f_{00})_*(p)=0$. But $\dim \Sigma=\dim B=2$, so $(f_{00})_*$ is not onto at point $p$, which means $p$ is a critical point of $f_{00}$ and $f_{00}(p)$ is a critical value. 

Similarly we have that 
\begin{align*}
f^D_{00}(p+sW)=W_{00}/|W_{00}|=f^D_{00}(p),
\end{align*}
for sufficiently small $s$. Again we get $(f_{00}^D)_*(p)=0$, therefore $p$ is a critical point of $f_{00}^D$ and $f_{00}^D(p)$ is a critical value of $f_{00}^D$. 

For the map $\tilde{f}_{00}$, we have
\begin{align*}
\tilde{f}_{00}(p+sW, t)=p+sW+tW_{00}=p+sW+t\lambda(s)W,
\end{align*}
where $\lambda(s)$ is a positive function of $s$. The derivatives are $\frac{\partial}{\partial s}\tilde{f}_{00}(p+sW, t)=(1+t\lambda'(s))W$ and $\frac{\partial}{\partial t}\tilde{f}_{00}(p+sW, t)=\lambda(s)W$. Therefore we get, at point $(p, t)$ where $s=0$, the tangent map,
\begin{align*}
&(\tilde{f}_{{00}})_* (\lambda(0)W-(1+t\lambda'(0))\frac{\partial}{\partial t})\\
=&\lambda(0)(1+t\lambda'(0))W-(1+t\lambda'(0))\lambda(0)W=0.
\end{align*}
But clearly $\lambda(0)W\neq 0\in T_p\Sigma$ and $(1+t\lambda'(0))\frac{\partial}{\partial t}\in T\mathbb{R}$ are not equal to each other, so we get $(\tilde{f}_{{00}})_*$ at point $(p, t)$ is not injective. Again the fact $\dim \Sigma\times \mathbb{R}=\dim \mathbb{R}^3=3$ implies that $(p, t)$ is a critical pint of $\tilde{f}_{00}$ and $\tilde{f}_{00}(p, t)$ is a critical value for any $t\in \mathbb{R}$.

\item[Case 2] If $p\in \Sigma_1$, then $p+sW\in\Sigma$ and $p+sW$ is also a singular point for each $s\in (0,\epsilon)$. If there is a $s\in (0, \epsilon)$ such that $p'=p+sW\in \Sigma_-$, then we get back to the previous case. If $p+sW\in \Sigma_1$ for all $s\in (0, \epsilon)$, then we can assume that $W_{00}=W_{10}$ are in the same direction of $W$. Then we get back to the same computation as the previous case.
\end{enumerate}
The proof is completed.
\end{proof} 

\subsection{Proof of the main theorems}\label{Proof}
In this subsection we give the proof of the main theorem. 

\begin{theorem}\label{mainproof}
For each type of beams, the singular sources set or singular directions set has measure zero as a subset of the corresponding ambient set. More precisely
\begin{enumerate}
\item $\mathcal{Z}$ has measure zero in $B$;
\item $\tilde{\mathcal{Z}}$ has measure zero in $\mathbb{R}^3$;
\item $\mathcal{D}$ has measure zero in $S^2$.
\end{enumerate}
\end{theorem}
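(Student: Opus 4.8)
The strategy is to combine the two propositions from Subsection~\ref{construction} with Sard's theorem (Theorem~\ref{Sard}). By Lemma~\ref{singular points}, every singular source $q\in\tilde{\mathcal Z}$ lies on a beam $L_{_{(p,W)}}$ that is singular at some $p\in\mathcal X$ with $W\in T_p\Sigma$, and every singular direction $W\in\mathcal D$ is the direction of a beam singular at some $p\in\mathcal X$. So it suffices to cover the images of $\mathcal X$ under the natural "endpoint", "translation", and "direction" maps by countably many measure-zero sets. The decomposition $\mathcal X=(\mathcal X\cap\Sigma_0)\cup(\mathcal X\cap(\Sigma_1\cup\Sigma_-))$ established after Lemma~\ref{Second form} (using that $\mathcal X\cap\Sigma_+=\emptyset$) splits the argument into the two cases handled by Propositions~\ref{g} and~\ref{f} respectively.

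First I would treat the part coming from $\Sigma_0\cap\mathcal X$. By Proposition~\ref{g}, every point $(p,W)\in T\Sigma$ with $p\in\Sigma_0\cap\mathcal X$ is a critical point of each of the smooth maps $g,\tilde g,g^D$; hence the images $g(\{(p,W):p\in\Sigma_0\cap\mathcal X\})$, and similarly for $\tilde g$ and $g^D$, are contained in the sets of critical values of $g,\tilde g,g^D$, which have measure zero in $B$, $\mathbb{R}^3$, and $S^2$ by Sard's theorem. Each such image contains precisely the contributions to $\mathcal Z$ (resp.\ $\tilde{\mathcal Z}$, $\mathcal D$) coming from beams singular at a point of $\Sigma_0\cap\mathcal X$: indeed, by Lemma~\ref{singular points}, if $q\in\tilde{\mathcal Z}$ arises from a beam singular at a point of $\Sigma_0$, then after reversing direction and sliding the base point to the far end of the singular interval we may write $q=p+W=\tilde g(p,W)$ for a suitable $(p,W)$ with $p\in\Sigma_0\cap\mathcal X$ (and $q=g(p,W)$ when $q\in B$, $q=g^D(p,W)$ for the direction statement).

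Next I would treat the part coming from $(\Sigma_1\cup\Sigma_-)\cap\mathcal X\subset\Sigma_n=\bigcup_{i\in\mathbb N}U_i$. On each $U_i$ we have the four maps $f_{\alpha\beta},\tilde f_{\alpha\beta},f^D_{\alpha\beta}$, $\alpha,\beta\in\{0,1\}$, and by Proposition~\ref{f} every $p\in\mathcal X\cap U_i$ produces critical values $f_{\alpha\beta}(p)$, $\tilde f_{\alpha\beta}(p,t)$ (for all $t$), $f^D_{\alpha\beta}(p)$ for some choice of $(\alpha,\beta)$. Applying Sard's theorem to each of the countably many maps $f_{\alpha\beta}:U_i\to B$, $\tilde f_{\alpha\beta}:U_i\times\mathbb R\to\mathbb R^3$, $f^D_{\alpha\beta}:U_i\to S^2$, the corresponding critical-value sets have measure zero, and a countable union of measure-zero sets has measure zero. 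Again Lemma~\ref{singular points} guarantees that every $q\in\tilde{\mathcal Z}$ (resp.\ $W\in\mathcal D$) arising from a beam singular at a point $p\in(\Sigma_1\cup\Sigma_-)\cap\mathcal X$ is one of these critical values: the singular beam at $p$ has direction $W$ with $W/|W|=W_{\alpha\beta}/|W_{\alpha\beta}|$ for the relevant $(\alpha,\beta)$, so $q$ lies on $L_{_{(p,W_{\alpha\beta})}}$ and thus $q=\tilde f_{\alpha\beta}(p,t)$ for some $t\in\mathbb R$ (or $q=f_{\alpha\beta}(p)$, $W=f^D_{\alpha\beta}(p)$).

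Finally I would assemble the pieces: $\tilde{\mathcal Z}$ is contained in the union of the critical-value set of $\tilde g$ and the countably many critical-value sets of the $\tilde f_{\alpha\beta}$ over all $i$, hence has measure zero in $\mathbb R^3$; then $\mathcal Z=\tilde{\mathcal Z}\cap B$ has measure zero in $B$ (this needs a one-line check that the $B$-valued maps $g,f_{\alpha\beta}$ already capture the $B$-contributions, or simply that a set of $3$-dimensional measure zero intersected with the smooth surface $B$ — via the maps $g,f_{\alpha\beta}$ landing in $B$ — gives a $2$-dimensional null set, which is exactly why we built $g$ and $f_{\alpha\beta}$ separately); and $\mathcal D$ is contained in the critical-value set of $g^D$ together with the countably many critical-value sets of the $f^D_{\alpha\beta}$, hence has measure zero in $S^2$. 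The only subtle point — and the one I would be most careful about — is making sure the reduction via Lemma~\ref{singular points} really lands every singular source/direction in the domain where the relevant proposition applies, i.e.\ that the base point $p$ can always be chosen in $\mathcal X$ with the singular direction tangent to $\Sigma$ at $p$; this is exactly the content of Lemma~\ref{singular points} and Lemma~\ref{Second form}, so the argument closes.
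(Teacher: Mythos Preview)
Your proposal is correct and follows essentially the same route as the paper: reduce via Lemma~\ref{singular points} to singular points $p\in\mathcal X$, split $\mathcal X$ according to the second fundamental form into $\Sigma_0\cap\mathcal X$ and $(\Sigma_1\cup\Sigma_-)\cap\mathcal X$, apply Propositions~\ref{g} and~\ref{f} respectively, and conclude by Sard's theorem and countable additivity of null sets. One presentational caution: in your assembly step, do not phrase the argument for $\mathcal Z$ as ``$\tilde{\mathcal Z}$ has $3$-dimensional measure zero, hence $\mathcal Z=\tilde{\mathcal Z}\cap B$ has $2$-dimensional measure zero'', since that implication is false in general; as you yourself note parenthetically, the correct argument is the direct one using the $B$-valued maps $g$ and $f_{\alpha\beta}$, which is exactly what the paper does for each of the three ambient spaces in parallel.
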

\begin{proof}
For each point $q\in \mathcal{Z}$, there is a point $p\in \mathcal{X}$ and direction $W\in T_p\Sigma$ such that $L_{_{(p, W)}}$ is singular at $p$ and $q=L_{_{(p, W)}}\cap B$.
\begin{enumerate}
\item If $p\in \Sigma_0\cap\mathcal{X}$, then Proposition \ref{g} implies that $(p, W)$ is a critical point of $g$. Therefore $q$ is a critical value of $g$.
\item If $p\in (\Sigma_1\cup\Sigma_-)\cap \mathcal{X}$, then Proposition \ref{f} implies that there is a $i\in \mathbb{N}$ and $\alpha, \beta\in \{0, 1\}$ such that $p\in U_i$ and $q$ is a critical value of $f_{\alpha\beta}:\, U_i\to B$.
\end{enumerate}
Then we have that $\mathcal{Z}$ is covered by the union of critical values of the countable many maps $g:\, \Sigma\to B$ and $f_{\alpha\beta}:\, U_i\to B$ for each $i\in \mathbb{N}$ and $\alpha, \beta\in \{0, 1\}$. Then using Sard's theorem, we get $\mathcal{Z}$ is covered by a countable union of zero measure sets, which is also a zero measure set.\\

For each point $q\in \tilde{\mathcal{Z}}$, there is a point $p\in \mathcal{X}$, $t\in \mathbb{R}$ and direction $W\in T_p\Sigma$ such that $L_{_{(p, W)}}$ is singular at $p$ and $q=p+W$.
\begin{enumerate}
\item If $p\in \Sigma_0\cap\mathcal{X}$, then Proposition \ref{g} implies that $(p, W)$ is a critical point of $g$. Therefore $q$ is a critical value of $\tilde{g}$.
\item If $p\in (\Sigma_1\cup\Sigma_-)\cap \mathcal{X}$, then Proposition \ref{f} implies that there is a $i\in \mathbb{N}$ and $\alpha, \beta\in \{0, 1\}$ such that $p\in U_i$ and $q$ is a critical value of $\tilde{f}_{\alpha\beta}:\, U_i\times \mathbb{R}\to \mathbb{R}^3$.
\end{enumerate}
Then we have that $\tilde{\mathcal{Z}}$ is covered by the union of critical values of the countable many maps $\tilde{g}:\, \Sigma\to \mathbb{R}^3$ and $\tilde{f}_{\alpha\beta}:\, U_i\times \mathbb{R}\to \mathbb{R}^3$ for each $i\in \mathbb{N}$ and $\alpha, \beta\in \{0, 1\}$. Then using Sard's theorem, we get $\tilde{\mathcal{Z}}$ is covered by a countable union of zero measure sets, which is also a zero measure set.\\

For each point $W\in \mathcal{D}$, there is a point $p\in \mathcal{X}$ such that $L_{_{(p, W)}}$ is singular at $p$.
\begin{enumerate}
\item If $p\in \Sigma_0\cap\mathcal{X}$, then Proposition \ref{g} implies that $(p, W)$ is a critical point of $g^D$. Therefore $W$ is a critical value of $g^D$.
\item If $p\in (\Sigma_1\cup\Sigma_-)\cap \mathcal{X}$, then Proposition \ref{f} implies that there is a $i\in \mathbb{N}$ and $\alpha, \beta\in \{0, 1\}$ such that $p\in U_i$ and $W$ is a critical value of $f^D_{\alpha\beta}:\, U_i\to S^2$.
\end{enumerate}
Then we have that $\mathcal{D}$ is covered by the union of critical values of the countable many maps $g^D:\, \Sigma\to B$ and $f^D_{\alpha\beta}:\, U_i\to B$ for each $i\in \mathbb{N}$ and $\alpha, \beta\in \{0, 1\}$. Then using Sard's theorem, we get $\mathcal{D}$ is covered by a countable union of zero measure sets, which is also a zero measure set.
\end{proof}

\subsection{Continuity of the image}

In this subsection we give the mathematical definition of image function, and prove that the image functions are continuous almost surely. Let $\Omega$ be an open domain in $\mathbb{R}^3$ while $\Sigma$ be the boundary of $\Omega$ and $\rho:\,\Omega\cup \Sigma\to \mathbb{R}_+$ be a continuous density function on the closeusre $\bar{\Omega}=\Omega\cup \Sigma$ . We further assume that for each straight line $L$ in $\mathbb{R}^3$, the intersubsection $L\cap \Sigma$ is a countable union of points and intervals on $L$.

\begin{definition}
For a parallel beam with direction $W$, let $H$ be the plane perpendicular to $W$ which passes through origin. Then the image function $\mathcal{I}_W:\, H\rightarrow \mathbb{R}$ is defined as 
\begin{align}\label{paraimage}
\mathcal{I}_W(q)=m_\rho(L_{(q,W)}\cap \Omega)
\end{align}
for each point $q\in H$, where $m(\cdot)$ is the Lebesgue integral of $\rho$ along $L_{(q,W)}\cap \Omega$, i.e.
\begin{align}
m_\rho(L_{(q,W)}\cap \Omega)=\int\limits_{L_{(q,W)}\cap \Omega} \rho\, d\mu.
\end{align}
\end{definition}

For a diverse beam, we will define two different image functions, one is the spherical image function, and the other is the image function on flat image plane. 
 
\begin{definition}
For a diverse beam starting from source point $q$, the spherical image function $\mathcal{SI}_q:\, S^2\rightarrow \mathbb{R}$ is defined as 
\begin{align}\label{paraimage2}
\mathcal{SI}_q(W)=m_\rho(L_{(q,W)}\cap \Omega)
\end{align}
for each $W\in S^2$. If there is a flat image plane $H$, which intersects all the light beams from $q$ through $\Omega$ and $q\notin H$, then image function $\mathcal{HI}_q:\, H\rightarrow \mathbb{R}$ is defined as 
\begin{align}\label{paraimage3}
\mathcal{HI}_q(p)=m_\rho(L_{(q,\, p-q)}\cap\Omega)
\end{align}
for each $p\in H$.
\end{definition}

The main theorem in this subsection is
\begin{theorem}\label{continuousproperty}
For almost every point $q$ in $\mathbb{R}^3$, and almost every direction $W\in S^2$, the image functions $\mathcal{SI}_q$, $\mathcal{HI}_q$ and $\mathcal{I}_W$ are continuous functions. 
\end{theorem}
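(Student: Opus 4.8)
Here is the line of attack. The three image functions are all of the form $q\mapsto m_\rho(L_{(q,W)}\cap\Omega)$ with one of the two arguments fixed, so the plan is to prove one joint-continuity statement for the ray integral and then feed in the measure-zero singular sets of Theorem~\ref{mainproof}. Since $\bar\Omega$ is compact, $\rho$ is bounded, say $0\le\rho\le M$, and by Tietze's extension theorem it extends to a continuous $\bar\rho$ on $\mathbb{R}^3$ with $|\bar\rho|\le M$. Pick $R$ with $\Omega\subset\{x:|x|<R\}$. Parametrising $L_{(q,W)}$ by $t\ge 0$ (with $|W|=1$) one then has, for any $T>|q|+R$,
\begin{align*}
m_\rho\big(L_{(q,W)}\cap\Omega\big)=\int_0^{T}\bar\rho(q+tW)\,\mathbf{1}_\Omega(q+tW)\,dt,
\end{align*}
and $T$ can be taken locally constant in $(q,W)$ because $q+tW\notin\{|x|<R\}\supset\Omega$ once $t>|q|+R$. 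Note also $m_\rho(L_{(q,p-q)}\cap\Omega)=m_\rho(L_{(q,(p-q)/|p-q|)}\cap\Omega)$, so $\mathcal{HI}_q(p)=\mathcal{SI}_q\big((p-q)/|p-q|\big)$ and $p\mapsto(p-q)/|p-q|$ is continuous on $H$ (since $q\notin H$); hence it suffices to handle $\mathcal{SI}_q$ and $\mathcal{I}_W$.

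The key lemma I would isolate is: if $L_{(q_0,W_0)}\cap\Sigma$ has one-dimensional Lebesgue measure zero, then $(q,W)\mapsto m_\rho(L_{(q,W)}\cap\Omega)$ is continuous at $(q_0,W_0)$. To prove it, write the difference of integrands at $(q,W)$ and $(q_0,W_0)$ as
\begin{align*}
\big[\bar\rho(q+tW)-\bar\rho(q_0+tW_0)\big]\,\mathbf{1}_\Omega(q+tW)+\bar\rho(q_0+tW_0)\,\big[\mathbf{1}_\Omega(q+tW)-\mathbf{1}_\Omega(q_0+tW_0)\big].
\end{align*}
On $[0,T]$ one has $|q+tW-q_0-tW_0|\le|q-q_0|+T|W-W_0|$, so by uniform continuity of $\bar\rho$ on bounded sets the first term goes to $0$ uniformly in $t$. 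For the second term, fix $t$ with $q_0+tW_0\notin\Sigma$: then $q_0+tW_0$ lies either in the open set $\Omega$ or in the open set $\mathbb{R}^3\setminus\bar\Omega$, so $\mathbf{1}_\Omega(q+tW)\to\mathbf{1}_\Omega(q_0+tW_0)$ as $(q,W)\to(q_0,W_0)$. The exceptional set $\{t\in[0,T]:q_0+tW_0\in\Sigma\}$ sits inside $L_{(q_0,W_0)}\cap\Sigma$ and is therefore null by hypothesis, while the whole integrand is dominated by $M$; applying dominated convergence along an arbitrary sequence $(q_n,W_n)\to(q_0,W_0)$ gives $\int_0^T|\cdots|\,dt\to0$, i.e. the claimed continuity.

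It remains to connect the hypothesis of the key lemma to the singular sets. By the standing assumption $L\cap\Sigma$ is a countable union of points and intervals for every line $L$; if $L_{(q_0,W_0)}\cap\Sigma$ contained an interval, taking $p$ to be the endpoint of that interval nearest $q_0$ would exhibit $L_{(q_0,W_0)}$ as a singular beam. So $L_{(q_0,W_0)}\cap\Sigma$ is null whenever $L_{(q_0,W_0)}$ is not singular. Hence, for $q\notin\tilde{\mathcal{Z}}$ the hypothesis of the key lemma holds at $(q,W)$ for every $W\in S^2$, so $\mathcal{SI}_q$ — and therefore $\mathcal{HI}_q$ — is continuous; and for $W\notin\mathcal{D}$ it holds at $(q,W)$ for every $q\in H$, so $\mathcal{I}_W$ is continuous. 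Theorem~\ref{mainproof} says $\tilde{\mathcal{Z}}$ is null in $\mathbb{R}^3$ and $\mathcal{D}$ is null in $S^2$, which is exactly the ``almost every'' in the statement.

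The main obstacle is the second step, namely controlling the variation of the indicator $\mathbf{1}_\Omega$ along the moving ray; this is where the measure-zero hypothesis is essential, and it is precisely what being a non-singular source (resp. direction) provides via the structural assumption on $L\cap\Sigma$. The rest — the locally uniform bound $T$, the reduction of topological continuity to sequential continuity, and the splitting of the integrand — is routine bookkeeping.
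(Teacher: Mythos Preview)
Your argument is correct and complete. The route, however, differs from the paper's. Where you extend $\rho$ by Tietze, split the integrand into a uniformly continuous factor and an indicator, and invoke dominated convergence along sequences, the paper instead introduces the pair $\Phi=\rho\,\mathbf{1}_\Omega$ (lower semicontinuous) and $\overline{\Phi}=\rho\,\mathbf{1}_{\bar\Omega}$ (upper semicontinuous), cites a lemma that integrating a bounded compactly supported lower/upper semicontinuous function in one variable yields a lower/upper semicontinuous function of the remaining variables, and then observes that for a non-singular direction or source the two resulting integrals coincide because $\int\Phi^\Sigma=0$ along the ray. Both approaches rest on exactly the same two ingredients---the structural assumption that $L\cap\Sigma$ is a countable union of points and intervals, and Theorem~\ref{mainproof}---and both handle $\mathcal{HI}_q$ by composing $\mathcal{SI}_q$ with the continuous map $p\mapsto(p-q)/|p-q|$. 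Your DCT argument is arguably more elementary and yields joint continuity in $(q,W)$ in one stroke; the paper's semicontinuity sandwich is slightly more conceptual, separating cleanly the ``overshoot'' and ``undershoot'' caused by the boundary, but requires the auxiliary lemma on integrals of semicontinuous functions. One small cosmetic point: taking $p$ to be the \emph{nearest endpoint} of an interval in $L\cap\Sigma$ may fail if that endpoint is not in $\Sigma$; taking any interior point of the interval avoids the issue and matches the definition of a singular beam directly.
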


In fact we only need to show that if a point $q\in \mathbb{R}^3$ is not a singular source then the image functions $\mathcal{SI}_q$ and $\mathcal{HI}_q$ of the diverse beam from $q$ is continuous. Similarly we will also show that if a direction $W\in S^2$ is not a singular direction then the image function $\mathcal{I}_W$ of the parallel beam with direction $W$ is continuous. The proof of the theorem \ref{continuousproperty} is based on the following lemma in real analysis.

\begin{lemma}\label{integrationlemma}
If $f(x,y):\, A\times A'\to \mathbb{R}$ is a compactly supported (lower, upper) continuous function, and $f(x,y)$ is bounded, then the integration
\begin{align}
g(y)=\int_{A} f(x,y) dx
\end{align}
is a (lower, upper) continuous function from $A'$ to $\mathbb{R}$.
\end{lemma}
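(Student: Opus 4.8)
The plan is to read ``(lower, upper) continuous'' as ``(lower, upper) semicontinuous'' and to handle the three cases (continuous, lower semicontinuous, upper semicontinuous) separately, the semicontinuous cases resting on Fatou's lemma and the continuous case on uniform continuity. First I would fix notation: let $K\subset A$ be the image of $\mathrm{supp}\, f$ under the projection $A\times A'\to A$. Since $\mathrm{supp}\, f$ is compact, $K$ is compact (hence of finite Lebesgue measure), and if $x\notin K$ then $f(x,y)=0$ for \emph{every} $y\in A'$; thus $g(y)=\int_K f(x,y)\,dx$. Because $f$ is bounded, say $|f|\le M$, and the slice $x\mapsto f(x,y)$ is measurable (in each of the three cases $f$ is Borel), $g$ is well defined and finite on all of $A'$. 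I would also note that $A'$ is a metric space in every application, so semicontinuity may be checked along sequences.

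For the lower semicontinuous case, fix $y\in A'$ and a sequence $y_n\to y$. Lower semicontinuity of $f$ gives $\liminf_n f(x,y_n)\ge f(x,y)$ for every $x$. Set $h_n(x)=f(x,y_n)+M\mathbf{1}_K(x)$; then $h_n\ge 0$ (using $|f|\le M$ on $K$ and $f=0$ off $K$) and $\int_A M\mathbf{1}_K = M|K|<\infty$, so Fatou's lemma yields
\[
\liminf_n \int_A h_n\,dx \ \ge\ \int_A \liminf_n h_n\,dx \ \ge\ \int_A\bigl(f(x,y)+M\mathbf{1}_K(x)\bigr)\,dx.
\]
Cancelling the finite common term $M|K|$ gives $\liminf_n g(y_n)\ge g(y)$, so $g$ is lower semicontinuous at $y$. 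The upper semicontinuous case follows by applying this to $-f$ (still bounded, still compactly supported, and lower semicontinuous), or equivalently by invoking the reverse Fatou lemma with the integrable majorant $M\mathbf{1}_K$. For the continuous case I would either combine the two semicontinuity statements, or argue directly: a continuous function with compact support is uniformly continuous, so given $\varepsilon>0$ there is $\delta>0$ with $|f(x,y)-f(x,y')|<\varepsilon$ whenever $y'$ is within $\delta$ of $y$, uniformly in $x$; then $|g(y)-g(y')|\le \int_K |f(x,y)-f(x,y')|\,dx\le \varepsilon\,|K|$, so $g$ is (uniformly) continuous.

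The only step needing genuine care, and the reason the hypotheses ``bounded'' and ``compactly supported'' both appear, is the construction of a single integrable function dominating the whole family $\{f(\cdot,y_n)\}_n$ from below (resp.\ above) at once, so that Fatou's lemma actually applies: compact support furnishes the finite-measure set $K$, boundedness furnishes the constant $M$, and $M\mathbf{1}_K$ does the job for all $n$ simultaneously. Everything else is routine.
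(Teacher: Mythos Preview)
The paper does not actually prove this lemma; it is stated as a standard fact from real analysis and then invoked in the proof of the continuity theorem immediately following it. So there is no ``paper's own proof'' to compare against.

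Your argument is correct and is the natural one. The essential observation is precisely what you isolate in your last paragraph: compact support projects to a finite-measure set $K\subset A$ carrying every slice $f(\cdot,y)$, and boundedness supplies a constant $M$ so that $M\mathbf{1}_K$ is a single integrable function dominating $|f(\cdot,y_n)|$ for all $n$ at once. With that in hand, Fatou's lemma (applied to the shifted nonnegative functions $f(\cdot,y_n)+M\mathbf{1}_K$) transfers lower semicontinuity through the integral, and the upper semicontinuous case follows by negation. The continuous case can be obtained either by combining the two semicontinuity conclusions or directly via uniform continuity on the compact support, as you indicate. Your reading of ``(lower, upper) continuous'' as ``(lower, upper) semicontinuous'' is the correct one, and matches how the paper uses the lemma (the functions $\Phi$ and $\overline{\Phi}$ are respectively lower and upper semicontinuous).
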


\begin{proof}[Proof of Theorem \ref{continuousproperty}]
Firstly we define the following functions
\begin{itemize}
\item $\Phi :\, \mathbb{R}^3\to \mathbb{R}$ is the function which equals $\rho$ on $\Omega$ and $0$ elsewhere; 
\item $\overline{\Phi}:\, \mathbb{R}^3\to \mathbb{R}$ is the function which equals $\rho$ on $\bar{\Omega}$ and $0$ elsewhere;
\item $\Phi^\Sigma:\, \mathbb{R}^3\to \mathbb{R}$ is the function which equals $\rho$ on $\Sigma$ and $0$ elsewhere. 
\end{itemize} 
Clearly they satisfy $\overline{\Phi}=\Phi+\Phi^\Sigma$. Because $\Omega$ is an open domain, we have that 
\begin{itemize}
\item $\Phi$ is a compact supported boudned and lower continuous function;
\item $\overline{\Phi}$ is a compact supported boudned and upper continuous function.
\end{itemize}

On the other hand, for any direction $W\in S^2$, there is a coordinate system $(x,y,z)$ for $\mathbb{R}^3$ such that $W=(0,0,1)$ and the perpendicular plane $H=W^{\perp}$  is the $x-O-y$ plane. Therefore the image function is
\begin{align}
\mathcal{I}_W(x,y)=\int_{\mathbb{R}}\Phi(x,y,z)dz,
\end{align}
and the correspondence functions are
\begin{align}
\mathcal{I}^\Sigma_W(x,y)=\int_{\mathbb{R}}\Phi^\Sigma(x,y,z)dz,
\end{align}
\begin{align}
\overline{\mathcal{I}}_W(x,y)=\int_{\mathbb{R}}\overline{\Phi}(x,y,z)dz.
\end{align}
Then Lemma \ref{integrationlemma} implies that $\mathcal{I}_W(x,y)$ is a lower continuous function and $\overline{\mathcal{I}}_W(x,y)$ is an upper continuous function. 

If $W$ is not a singular direction, then $\mathcal{I}_W^\Sigma(x,y)=0$ and that $\mathcal{I}_W(x,y)=\bar{\mathcal{I}}_W(x,y)$ is both lower and upper continuous. In this case, the image function $\mathcal{I}_W(x,y)$ is a continuous function. 

For any point $q\in \mathbb{R}^3$, let $(r, \theta, \phi)$  be the polar coordinate system centered at point $q$. Then the image function is
\begin{align}
\mathcal{SI}_q(\theta,\phi)=\int_{0}^\infty\Phi(r,\theta,\phi)dr,
\end{align}
and the correspondence functions are
\begin{align}
\mathcal{SI}^\Sigma_q(\theta,\phi)=\int_{0}^{\infty}\Phi^\Sigma(r, \theta,\phi)dr,
\end{align}
\begin{align}
\overline{\mathcal{SI}}_q(\theta,\phi)=\int_{0}^{\infty}\overline{\Phi}(r, \theta,\phi)dr.
\end{align}
Then Lemma \ref{integrationlemma} implies that $\mathcal{SI}_q(\theta,\phi)$ is a lower continuous function and $\overline{\mathcal{SI}}_q(\theta,\phi)$ is an upper continuous function. 

If $q$ is not a singular source, then $\mathcal{SI}_q^\Sigma(\theta,\phi)=0$ and that $\mathcal{SI}_q(\theta,\phi)=\bar{\mathcal{SI}}_q(\theta,\phi)$ is both lower and upper continuous. In this case, the image function $\mathcal{SI}_q(\theta,\phi)$ is a continuous function. 

For the image function $\mathcal{HI}_q:\,H\rightarrow \mathbb{R}$, consider the map $T:\, H\rightarrow S^2$ with $T(p)=L(o,\,p-q)\cap S^2$ for any $p\in H$, and $o$ as the center of $S^2$. Because $q\notin H$, the transformation $T:\,H\rightarrow S^2$ is continuous and one-to-one. Therefore the image function 
\begin{align*}
\mathcal{HI}_q=\mathcal{SI}_q\circ T
\end{align*}
is also continuous as a composition of two continuous maps.

At last, Theorem \ref{mainproof} shows that the singular directions and singular sources are zero measure sets in $S^2$ and $\mathbb{R}^3$ respectively. Therefore, the image functions are almost surely continuous. 
\end{proof}

\section{Numerical Experiments}
In this section we compare several image denoising methods for Poisson noise removal to verify our analysis above. Currently there are many successful image denoising methods, such as the total variation (TV) model~\cite{Rudin92,Brune09,Chan07,Le07,Figueiredo09,Panin99,Setzer09b,Wu11,Zanella09}, the wavelet frame thresholding and regularization methods~\cite{Donoho95,Cai09,JS1,CDOS12}, anisotropic diffusion~\cite{Perona90,Weickert98}, and nonlocal mean method~\cite{Buades05}. We choose only to compare the TV model and wavelet frame regularization method to demonstrate our analysis in previous section, since these two methods use $\ell_1$ minimization of different regularizers and the regularizers reveal the smoothness of the
underlying data.

%The TV model was firstly introduced in \cite{Rudin92}. Due to its good edge preservation property, it has been extensively applied in various digital image processing problems; see \cite{Chan05} and references therein. Many scholars studied very recently the application of TV for Poisson noise removal~\cite{Brune09,Chan07,Le07,Figueiredo09,Panin99,Setzer09b,Wu11,Zanella09}.

Compared to the TV model, which favors only piecewise constant images, wavelet frame regularization is more flexible and can model smooth images very well.
%For detailed and comprehensive introduction to the MRA-based wavelet frame theory, fast wavelet frame transform and its applications in imaging processing, we refer the reader to consult \cite{Dong10, Shen10}.
As shown in \cite{CDOS12}, different wavelet frame based approaches can be used to approximate the TV method with different orders of differential operators, which also justifies that wavelet frame based approach can be applied more successfully in this smooth image situation.
% Moreover, Haar wavelet model is shown to be equivalent to the TV model in one dimensional case
Actually in one-dimensional case, the TV model is equivalent to the simplest frame based model, i.e., the Haar framelet model, which approximates the first order differential operators \cite{SWBMW}.

We now compare these two methods for images generated by transmission imaging. In practice an image function is recorded by sensing elements on the imaging plane. The sensing elements sample the image function. However, the samples can approximate the image function very well as long as the resolution is high enough. Without loss of generality, we assume that the resolution is $M\times N$ and thus an image is an $M\times N$ real valued 2-dimensional array.

Suppose $u=\{u_{i,j}\}_{1\leq i\leq M,1\leq j\leq N}$ is an image. Its discrete gradient is given by
$$(\nabla u)_{i,j} =((\mathring{D}_x^+u)_{i,j},(\mathring{D}_y^+u)_{i,j}),$$
%with
%$$\begin{array}{c}
%(\mathring{D}_x^+u)_{i,j}=\left\{\begin{array}{ll}
%u_{i,j+1}-u_{i,j}, & 1\leq j < N\\ u_{i,1}-u_{i,N}, & j=N
%\end{array}\right.
%\mbox{ for } i=1, \ldots, M,  \\
%(\mathring{D}_y^+u)_{i,j}=\left\{\begin{array}{ll}
%u_{i+1,j}-u_{i,j}, & 1\leq i < M\\ u_{1,j}-u_{M,j}, & i=M
%\end{array}\right.
%\mbox{ for } j=1, \ldots, N.
%\end{array}$$
where $\mathring{D}_x^+$ and $\mathring{D}_y^+$ forward difference
operators with periodic boundary condition ($u$ is periodically
extended); see, e.g., \cite{Wu10}. Consequently fast Fourier
transform can be adopted in our algorithm. The TV model for Poisson
noise removal is as follows:
\begin{equation}
\label{eq-TVKL-model} \min\limits_{u}
E_{\mathrm{TVKL}}(u)\equiv\alpha\mathrm{TV}(u)+\sum\limits_{i,j,(Ku)_{i,j}>0}\left((Ku)_{i,j}-f_{i,j}\log (Ku)_{i,j}\right),
\end{equation}
where $\mathrm{TV}(u)=\sum\limits_{i,j}|(\nabla u)_{i,j}|$.

To present wavelet frame regularization model, we need to first introduce the discrete framelet transform. A discrete framelet transform applies some discrete convolutions to a signal. The convolution kernels are actually some filters including a low pass filter and some high pass filters. The low pass filter is the refinement mask of a refinable function, while the high pass filters are determined by the masks of framelets represented by the refinable function. Starting from a refinable function, one may construct an MRA-based tight frame system using the unitary extension principle (UEP) \cite{RS1}. Also, a usual way to construct multivariate framelets is using tensor-products of univariate framelets. For detailed and comprehensive introduction to the MRA-based wavelet frame theory, fast wavelet frame transform, please refer to \cite{Dong10}. In the following we list three simple but very useful univariate framelets which are used in the experiments.
\begin{itemize}
\item Haar framelets. The function $h(x)=1$ for $x\in[0,1]$ and $0$ otherwise has a refinement mask $a_0=\frac{1}{2}[1,1]$. The high pass filter is defined as $a_1=\frac{1}{2}[1,-1]$.
% Then $\{a_0,a_1\}$ satisfy the UEP. The mask $a_1$ can generate a tight frame system.
\item Piecewise linear framelets. The piecewise linear B-spline
$B_2(x)=\max(1-|x|,0)$ has a refinement mask
$a_0=\frac{1}{4}[1,2,1]$. The high pass filters are defined as
$a_1=\frac{\sqrt{2}}{4}[1,0,-1]$ and $a_2=\frac{1}{4}[-1,2,-1]$.
% Then $\{a_0,a_1,a_2\}$ satisfy the
%UEP. The masks $\{a_1,a_2\}$ can generate a tight frame system.
\item Piecewise cubic framelets. The centered piecewise cubic B-spline $B_4$ has a refinement mask $a_0=\frac{1}{16}[1,4,6,4,1]$. The high pass filters $a_1,a_2,a_3,a_4$ are defined as follows
$$
\begin{array}{ll}
a_1=\frac{1}{8}[-1,-2,0,2,1], &
a_2=\frac{\sqrt{6}}{16}[-1,0,-2,0,1],\\
a_3=\frac{1}{8}[-1,2,0,-2,1], & a_4=\frac{1}{16}[1,-4,6,-4,1].
\end{array}
$$
%Then $\{a_0,a_1,a_2,a_3,a_4\}$ satisfy the UEP. The masks
%$\{a_1,a_2,a_3,a_4\}$ can generate a tight frame system.
\end{itemize}

\iffalse
Their 2-dimensional extensions by tensor-products give 2D framelets.
Given a set of univariate masks $\{a_i:0\leq i\leq r\}$,
% satisfying the UEP,
we define the following 2D masks
$$
a_{i_1,i_2}[k_1,k_2]=a_{i_1}[k_1]a_{i_2}[k_2], 0\leq i_1,i_2\leq r,
(k_1,k_2)\in \mathbb{Z}^2.
$$
It can be easily verified that the masks $\{a_{i_1,i_2}:0\leq
i_1,i_2\leq r\}$ satisfy the UEP if the univariate masks
$\{a_i:0\leq i\leq r\}$ are constructed from UEP.
%and thus $\{a_{i_1,i_2}:0\leq i_1,i_2\leq r,
%(i_1,i_2)\neq(0,0)\}$ can generate a tight frame system.

The discrete 2D framelet transform of 2-dimensional array $u$ is
defined as
$$
\mathrm{W}u=\{\mathrm{W}^l_{i_1,i_2}u:0\leq l\leq L-1,0\leq
i_1,i_2\leq r\},
$$
where $L$ is the levels of decomposition; $\mathrm{W}^l_{i_1,i_2}u$
are the frame coefficients of $u$ at level $l$ and band
$({i_1,i_2})$. $\mathrm{W}^l_{i_1,i_2}u$ is the following
convolution
$$
\mathrm{W}^l_{i_1,i_2}u=a^l_{i_1,i_2}[-\cdot]\circledast u
$$
where $\circledast$ means a convolusion with a certain boundary
condition, such as periodic boundary condition; and the convolution
kernel $a^l_{i_1,i_2}$ is defined as
$$
a^l_{i_1,i_2}=\tilde{a}^l_{i_1,i_2}\circledast\tilde{a}^{l-1}_{0,0}\circledast\cdots\circledast\tilde{a}^{0}_{0,0},
$$
with
$$
\tilde{a}^l_{i_1,i_2}[k_1,k_2]=\left\{
\begin{array}{lr}
a_{i_1,i_2}[2^{-l}k_1,2^{-l}k_2], &
[k_1,k_2]\in2^{l}\mathbb{Z}^2;\\
0,& [k_1,k_2]\in2^{l}\mathbb{Z}^2;
\end{array}
\right.
$$
There are fast algorithms for discrete framelet transform and
inverse transform. Please refer to \cite{Dong10} for more details.
\fi

The model with wavelet frame regularization for Poisson noise removal is as follows
\begin{eqnarray}
\label{eq-WaveletFrameKL-model} && \min\limits_{u}
 E_{\mathrm{WFKL}}(u)\equiv\|\diag(\lambda)
\mathrm{W}u\|_1+\sum\limits_{i,j,(Ku)_{i,j}>0}\left((Ku)_{i,j}-f_{i,j}\log (Ku)_{i,j}\right),
\end{eqnarray}
where $\mathrm{W}u$ is the wavelet frame transform of $u$ and $\lambda$ is a positive vector.

Problems \eqref{eq-TVKL-model} and \eqref{eq-WaveletFrameKL-model} are both $\ell_1$ minimization problems. Recently many efficient methods have been developed to solve this kind of problems; see, e.g.,~\cite{Chambolle04,Chan99,Carter01,Cai09,Esser09,Goldstein08,Li09,Osher10,Tai09,Setzer09,Wang08,Wu10,Wu11,Yin08,Yin09,Zhu08} and references therein. In our implementation we applied operator splitting and augmented Lagrangian method with single inner iteration \cite{Glowinski89} to solve these problems.

We now provide some numerical experiments for verifying the analysis in previous section. The experiments were performed under Ubuntu and Matlab R2011b (version 7.13.0) on a workstation with Intel Xeon (Core 6) E5645 2.40GHz and 50Gb memory. We used $\frac{\|u^k - u^{k-1}\|_2}{\|u^{k}\|_2} \leq 5 \times 10^{-5}$  as the stopping criteria. The levels for all the three wavelet transforms were all set to be $1$. The parameters in the model $\alpha, \lambda$ were rigorously tuned up for all images to achieve the optimal performance. In each example, the density functions were reconstructed using the same method, respectively. In the experiments, the results from TV regularization, Haar wavelet system, piecewise linear B-Spline wavelet system and piecewise cubic B-spline wavelet system were compared in signal noise ratio (SNR) values and Frobenius norms.

The SNR value is defined as
\[
\mbox{SNR}: = 10 \log_{10} \frac{\|u_0-\bar{u_0}\|^2}{\|u-u_0\|^2}
\]
where $u_0$ is the original (clean) signal; $\bar{u_0}$ is the mean value of $u_0$; and $u$ is the noisy or restored signal. The errors are computed as the Frobenius norms on the differences between reconstructed density functions and the ground truth density function.

\begin{eg}
{This is an example for the 2D Shepp-Logan Phantom density function of size $256 \times 256$  in Matlab. The projection and reconstruction are done by Matlab built-in function ``fanbeam" and ``ifanbeam". There are $360$ projections with $509$ detector values for each projection. For each projection, we add Poisson noise by using Matlab ``imnoise" with a scaling to the projection image by a factor $10^{-12}$, e.g., ``imnoise($f*$factor, `Poisson')/factor'' where $f$ is a clear projection image. These projections corrupted with Poisson noises are then processed by the TV and framelet based denoising models $(\ref{eq-TVKL-model})$ and $(\ref{eq-WaveletFrameKL-model})$. We mention that here we did not compute the results of the Haar framelet based model, because the Haar framelet based model with level 1 is equivalent to the TV model in 1D case, as mentioned before. Fig.~\ref{figure1} shows the denoised results of two projections. It is clear that the wavelet frame systems with piecewise linear B-spline and cubic B-spline framelets generate better results with higher SNR values than the TV model. Also, the frame system with piecewise cubic B-spline framelets yields better results with higher SNR values than the frame system with piecewise linear B-spline framelets.  The clean projections, the noisy projections, and the various denoised projections are then used to reconstruct the 2D Phantom; See Fig. \ref{figure2}. It can be seen that better density functions with small errors have been reconstructed from the denoised projections by the frame based model. The frame based denoising model outperforms greatly the TV model for this kind of images and their denoised projection data generate better density function reconstructions. The smoother the framelet is, the better the results are.}
% two randomly chosen projection images
\begin{figure}[ht]
\begin{center}
\subfigure[Projection 100]{ \includegraphics[width=0.6\textwidth]{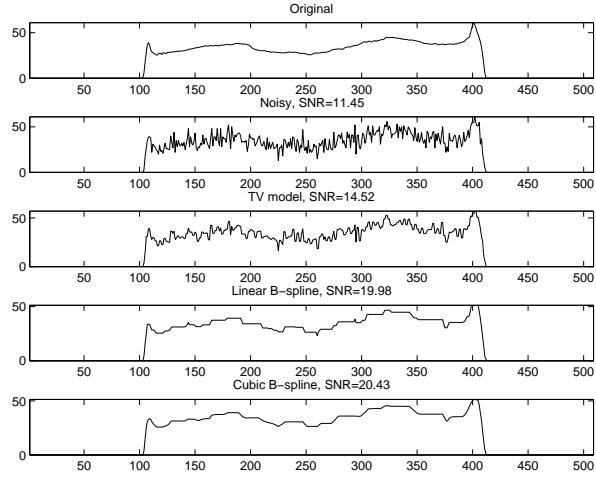} }
\subfigure[Projection 200 ]{ \includegraphics[width=0.6\textwidth]{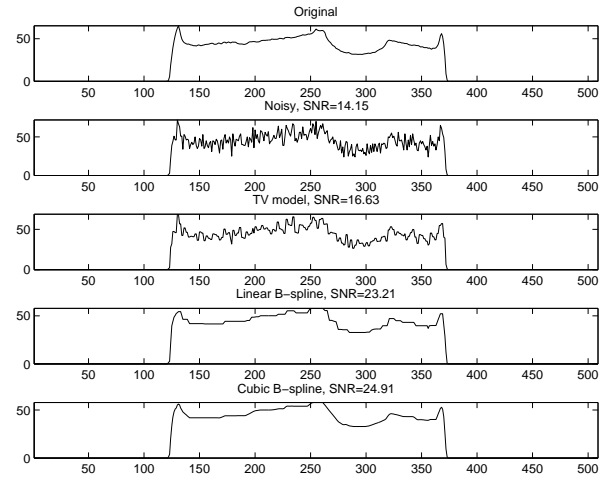}}
\caption{\small The comparisons between the TV and wavelet frame based models for denoising two projections of the 2D Phantom object. (a) The 100'th projection: the original projection; the corrupted projection with Poisson noise; the denoised results by the TV model, the piecewise linear B-spline framelet and piecewise cubic B-spline framelet models. (b) The 200'th projection: the original projection; the corrupted projection with Poisson noise; the denoised results  by the TV model, the piecewise linear B-spline framelet and piecewise cubic B-spline framelet models.}
\label{figure1}
\end{center}
\end{figure}
\vspace{.2cm}
% the reconstruced images
\begin{figure}
\centering
\subfigure[Original]{ \includegraphics[width=0.3\textwidth]{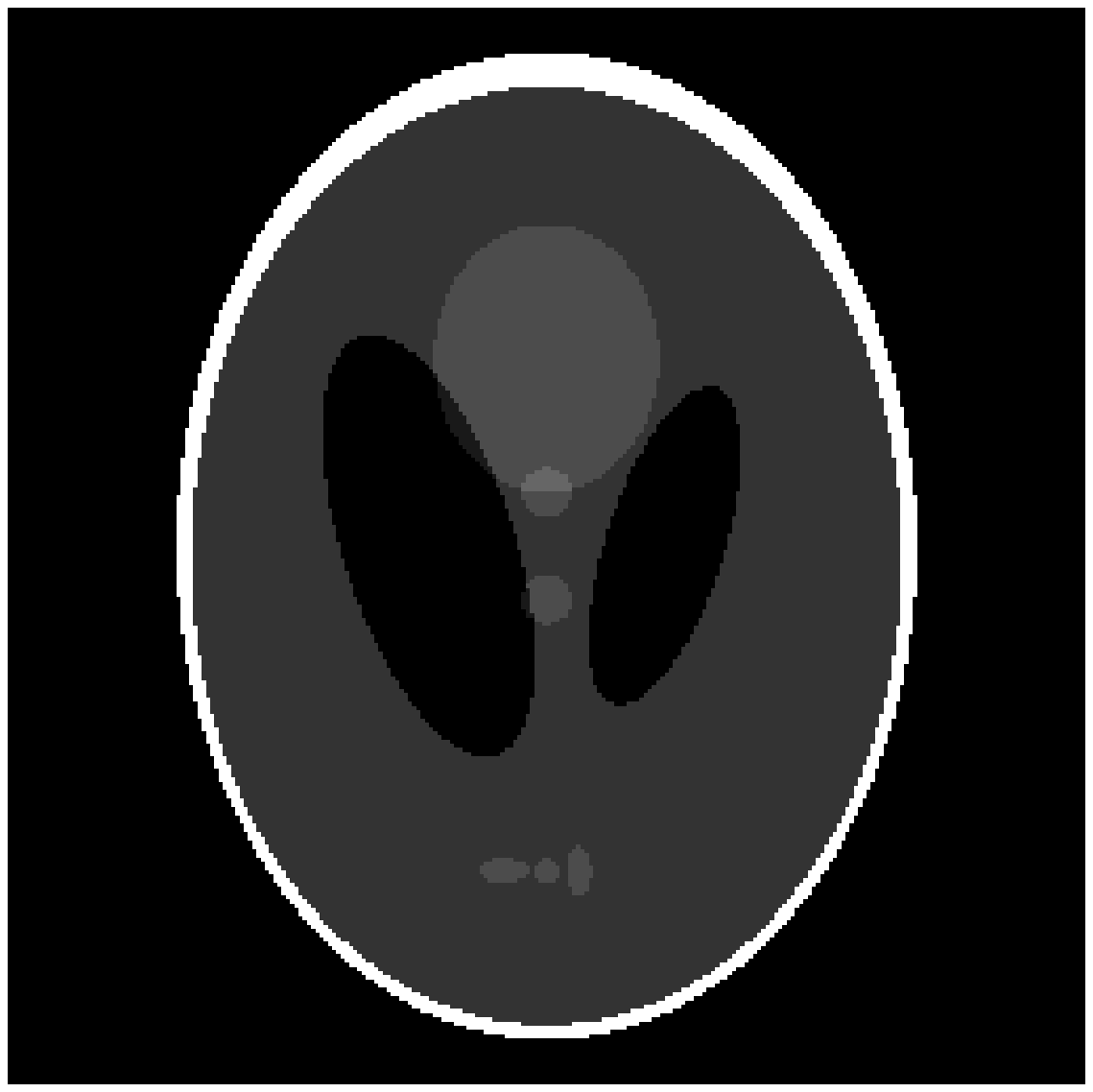} }
\subfigure[Reconstructed]{ \includegraphics[width=0.3\textwidth]{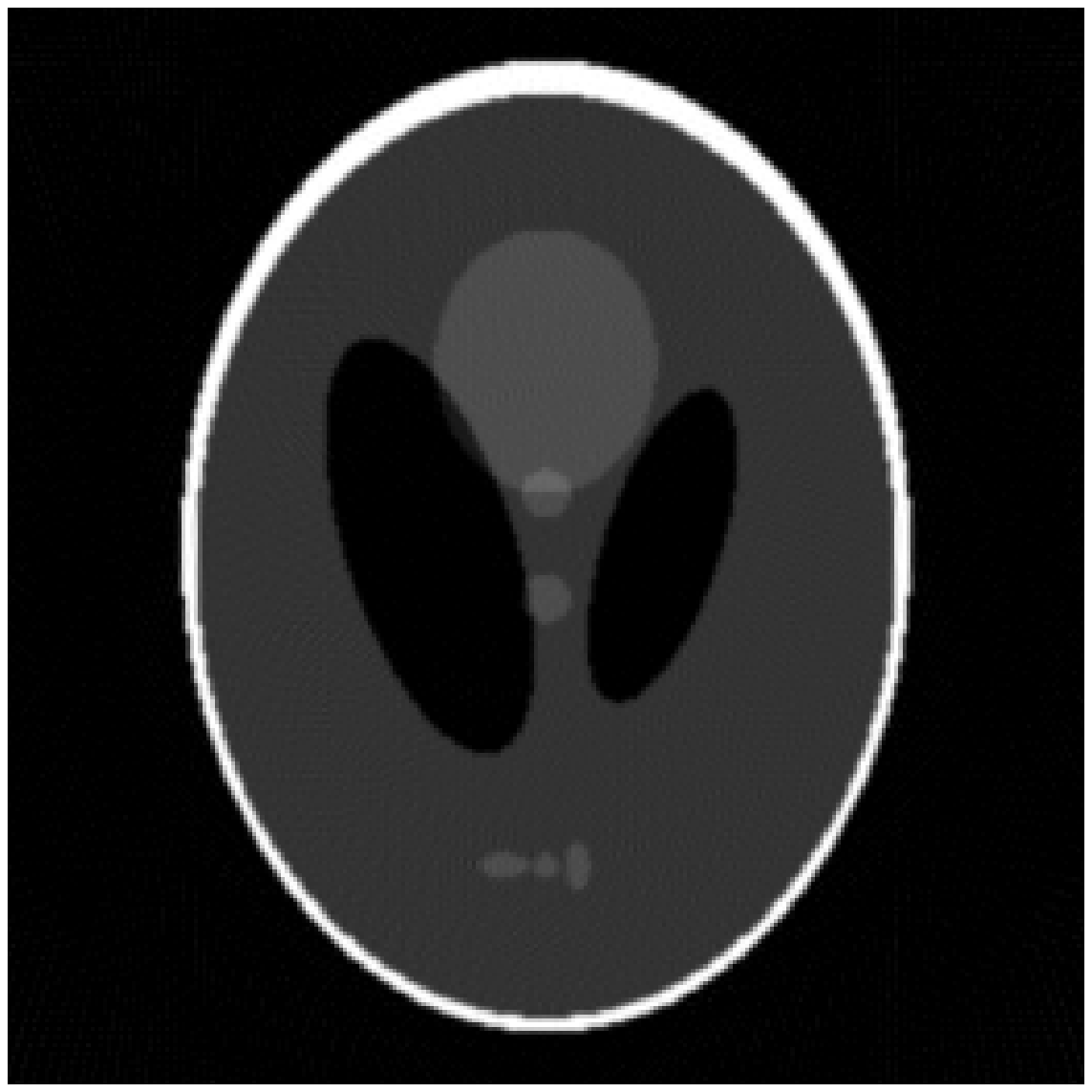} }
\subfigure[Noisy, error=61.53]{ \includegraphics[width=0.3\textwidth]{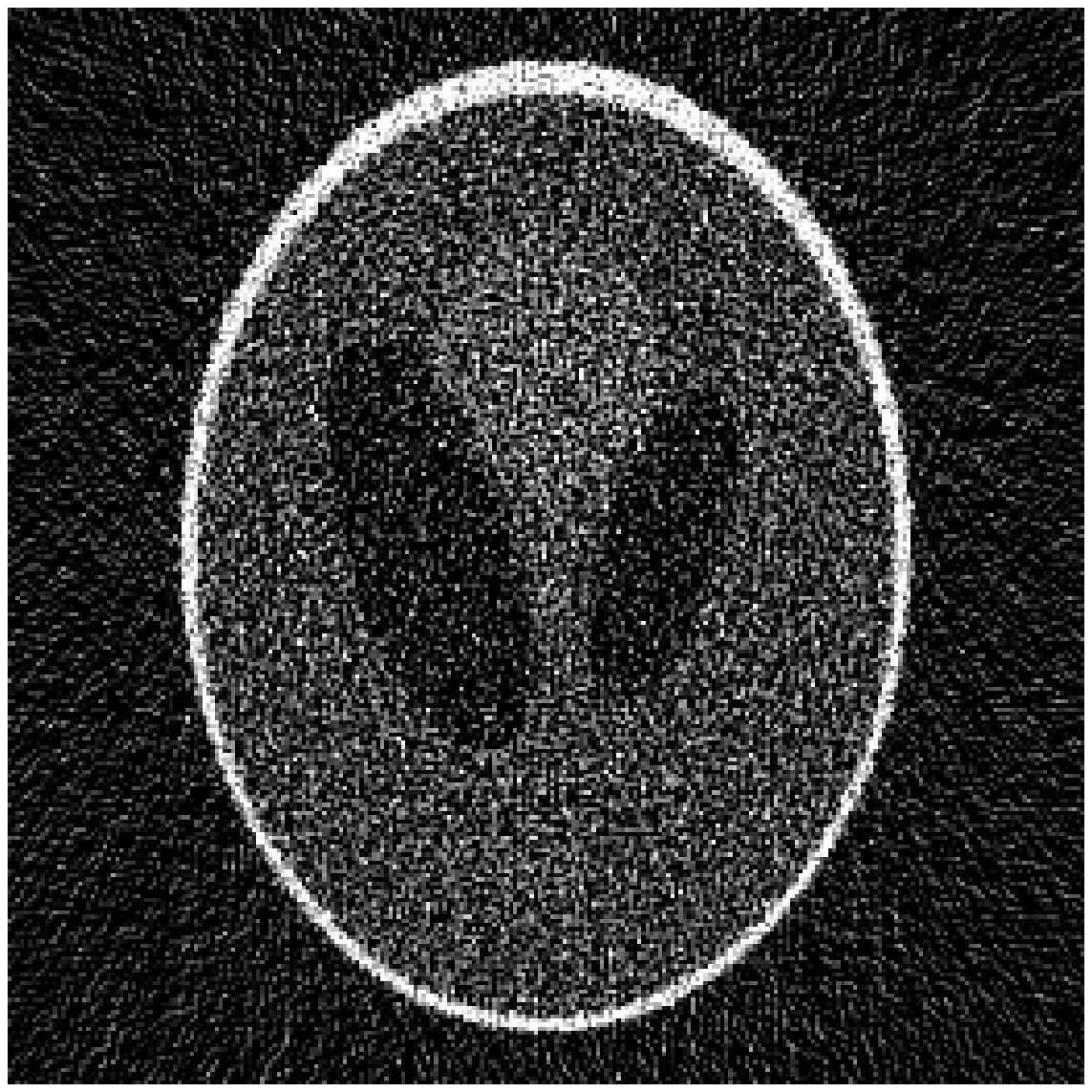} }
\subfigure[TV, error=39.12]{ \includegraphics[width=0.3\textwidth]{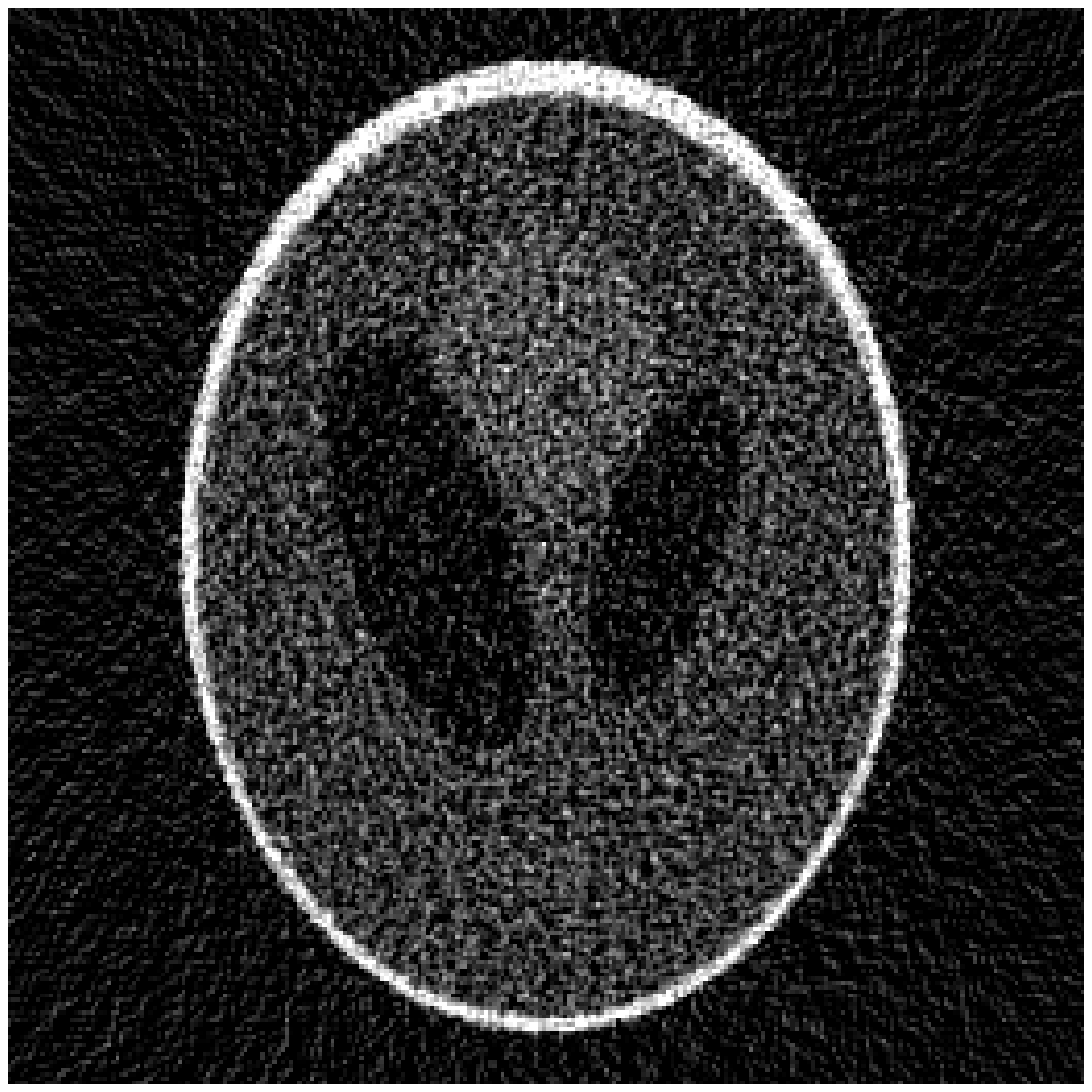} }
\subfigure[Linear, error=13.56]{ \includegraphics[width=0.3\textwidth]{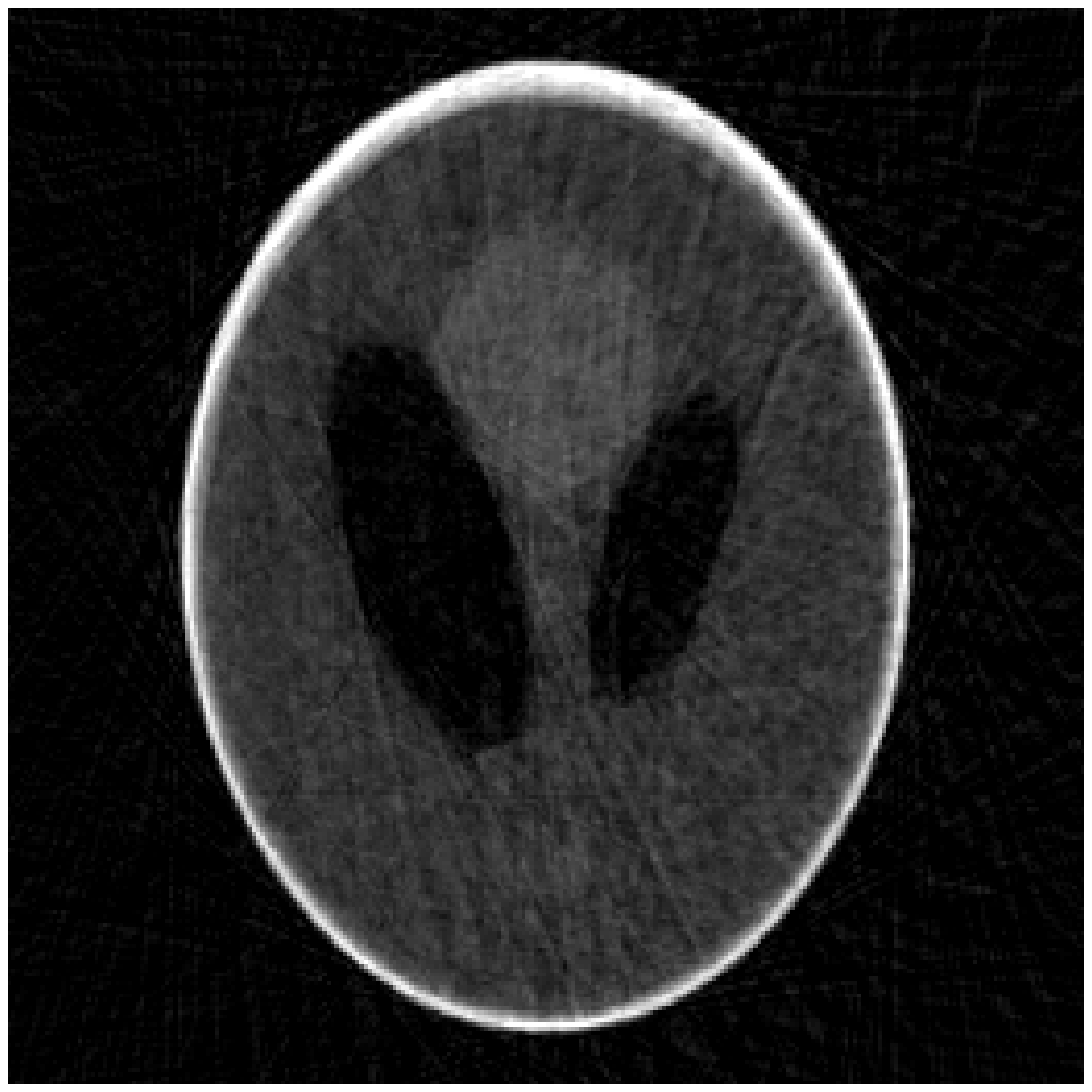} }
\subfigure[Cubic, error=11.32]{ \includegraphics[width=0.3\textwidth]{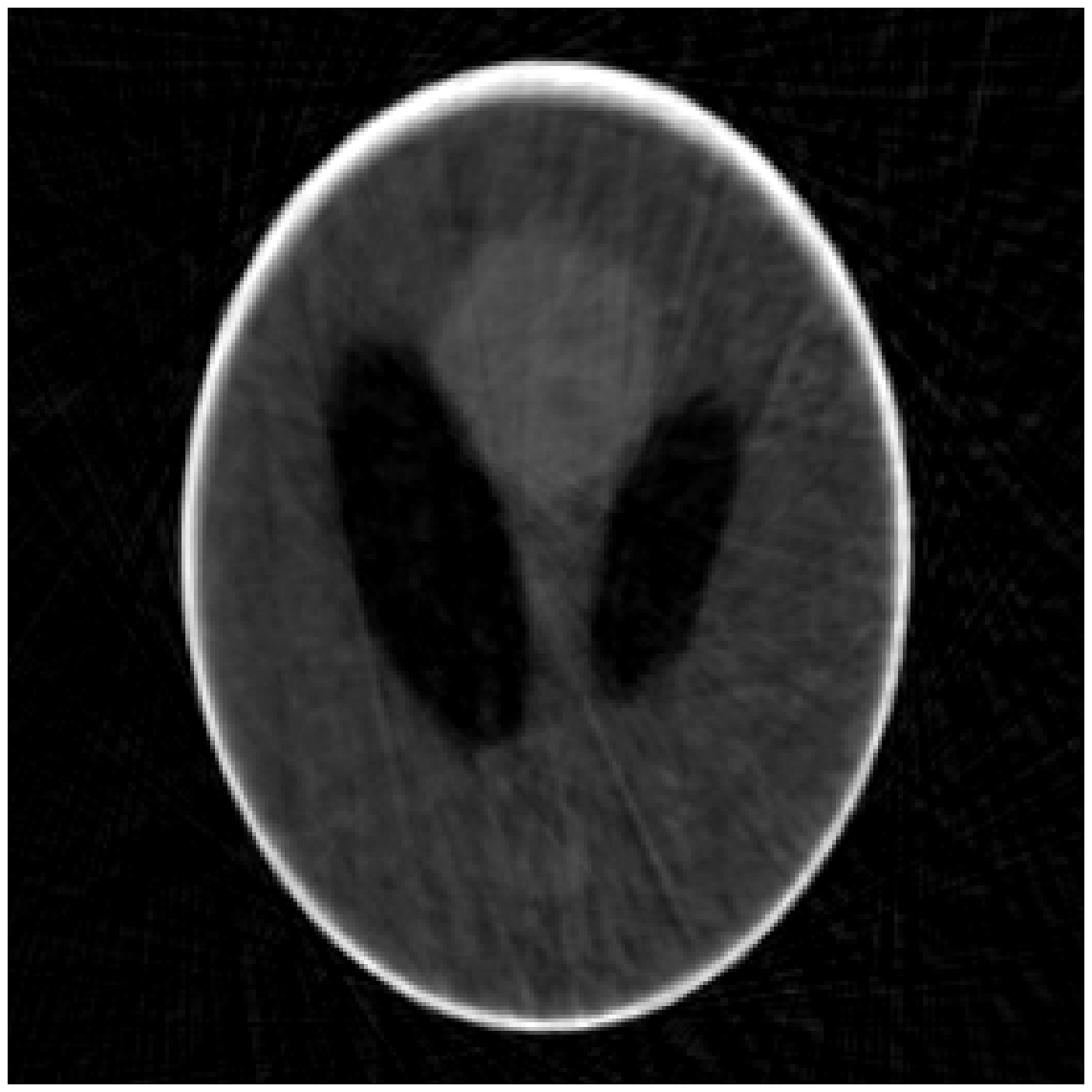} }
\caption{\small The comparison of the reconstructed Phantom using various versions of projections. The first row shows the original Phantom; the reconstructed Phantom from the clean projections; the reconstructed Phantom from the projections contaminated by Poisson noise. The second row shows the reconstructed Phantoms from denoised projections using the TV model, the piecewise linear B-spline model and the piecewise cubic B-spline model. All the reconstructions were generated by Matlab build-in function ``ifanbeam".} \label{figure2}
\end{figure}
\end{eg}

\begin{eg}
{In this example we show a numerical experiment for the 3D Shepp-Logan phantom data with size $128 \times 128 \times 128$. The projection and reconstruction are done by the 3D Cone beam CT projection  backprojection FDK Matlab code \cite{Kim13}. We obtain 84 projections with each having the size $600 \times 500$. For each projection, we add Poisson noise using Matlab built-in function ``imnoise" with a scaling factor $10^{-6}$. These projections are then processed by the TV and frame based denoising models $\eqref{eq-TVKL-model}$ and $\eqref{eq-WaveletFrameKL-model}$. The noisy projections and various denoised versions are used to reconstruct the density function of the 3D Phantom. The reconstruction is done by backprojection method with filter ``hamming". Fig. \ref{phantom128_proj} shows one randomly chosen projection denoised by the TV model, the Haar wavelet system, piecewise linear B-spline system and piecewise cubic B-spline system. The piecewise linear and piecewise cubic B-spline systems return both smoother denoised results with higher SNR values than the Haar system and TV model. Fig. \ref{phantom128_rec} shows the 64'th slice of the reconstructed density function. The SNR values of the denoised projections and the reconstruction errors demonstrate the advantage of the frame based model, especially the model with smoother framelets.}

\begin{figure}
\begin{center}
\subfigure[Original]{ \includegraphics[width=0.3\textwidth]{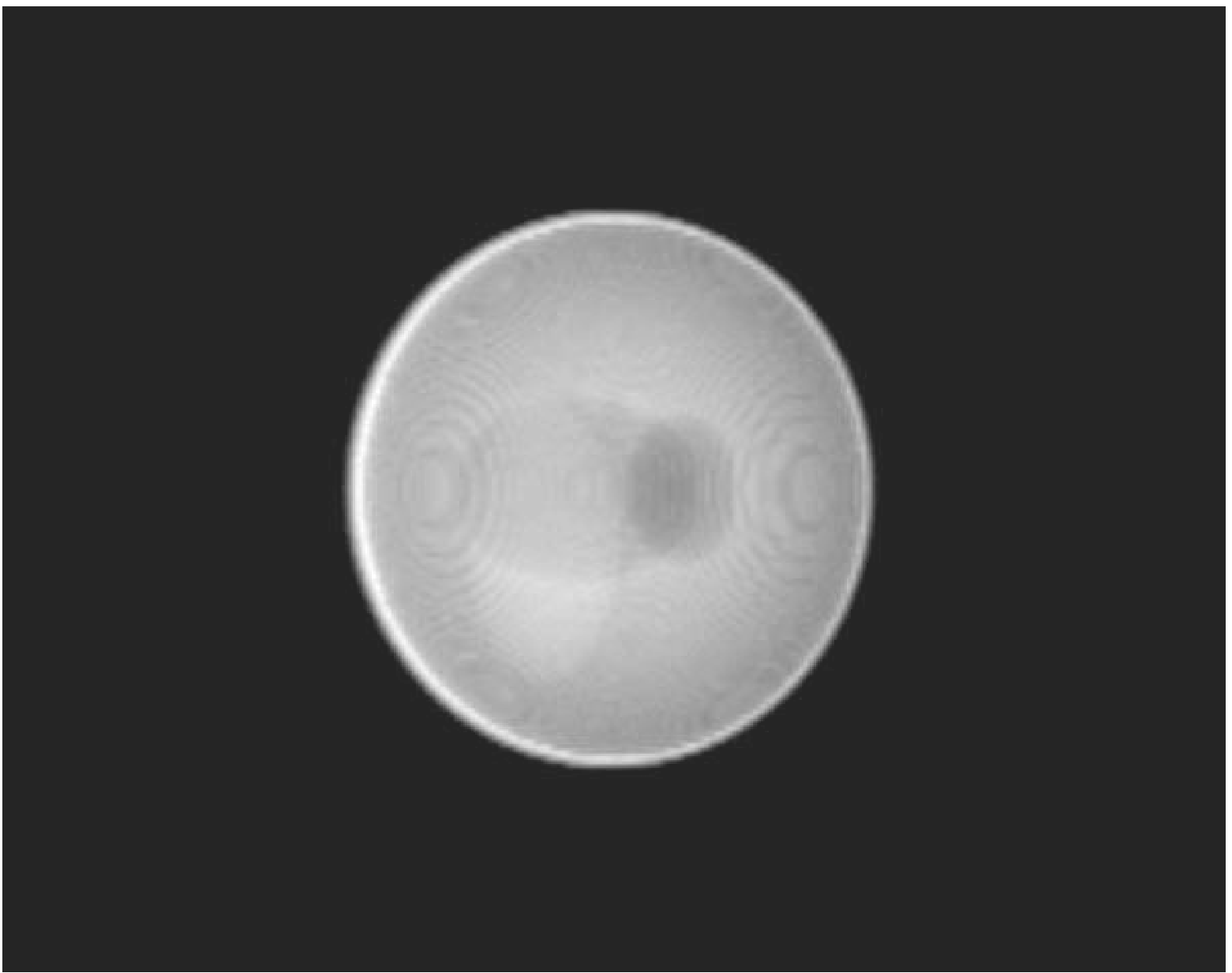} }
\subfigure[Noisy, SNR=9.25]{ \includegraphics[width=0.3\textwidth]{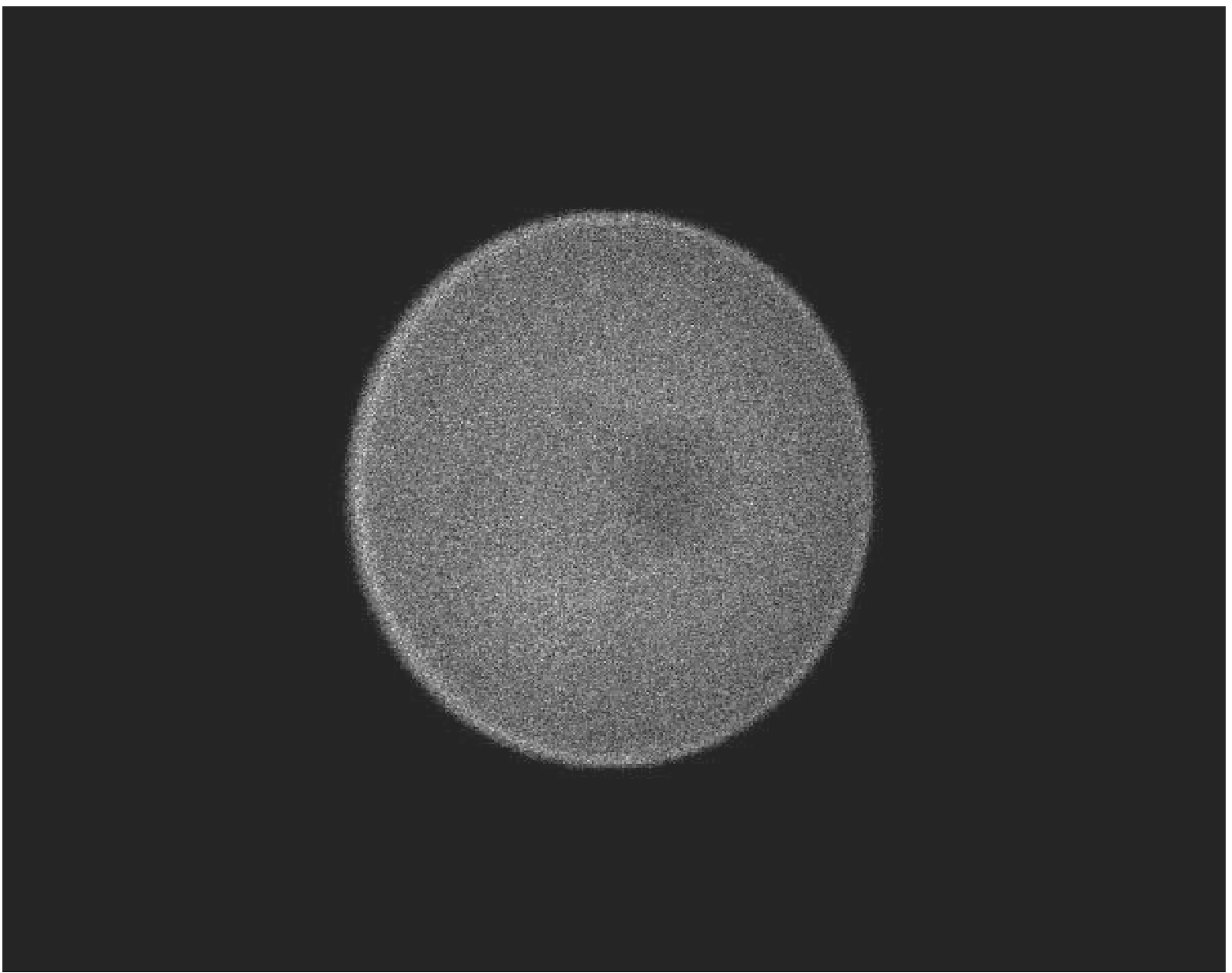} }
\subfigure[TV, SNR=24.62]{ \includegraphics[width=0.3\textwidth]{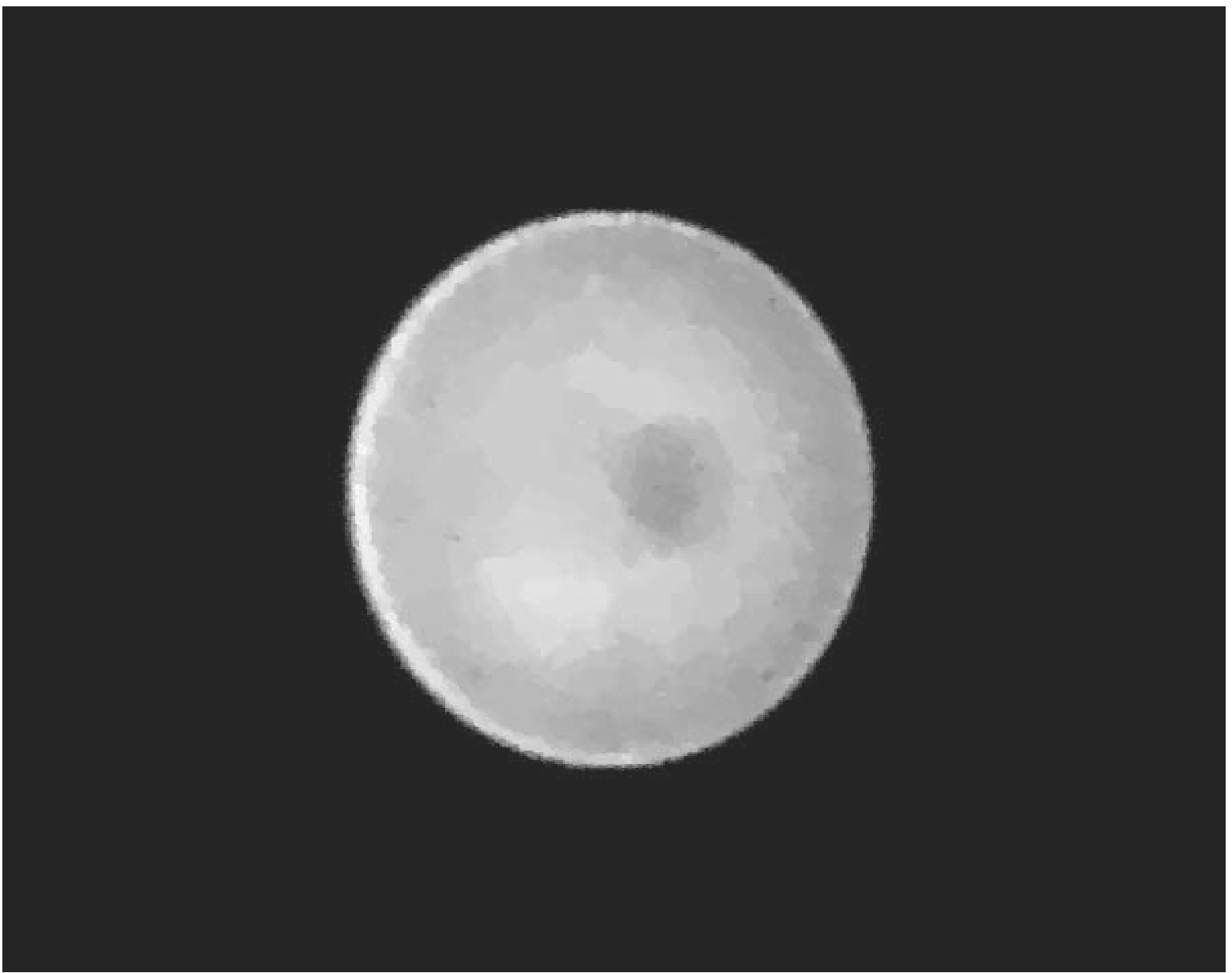} }
\subfigure[Haar, SNR=24.63]{ \includegraphics[width=0.3\textwidth]{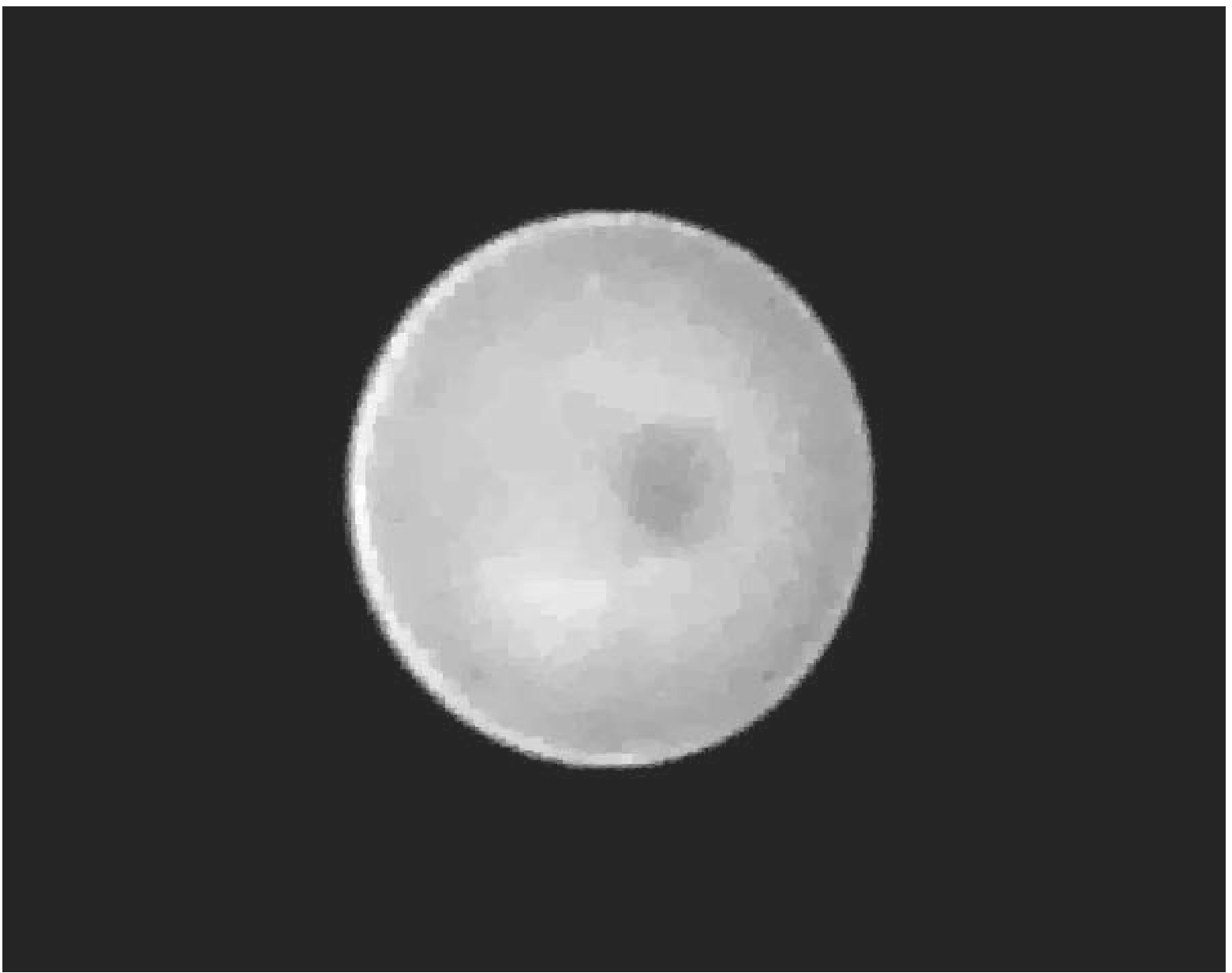} }
\subfigure[Linear, SNR=25.74]{ \includegraphics[width=0.3\textwidth]{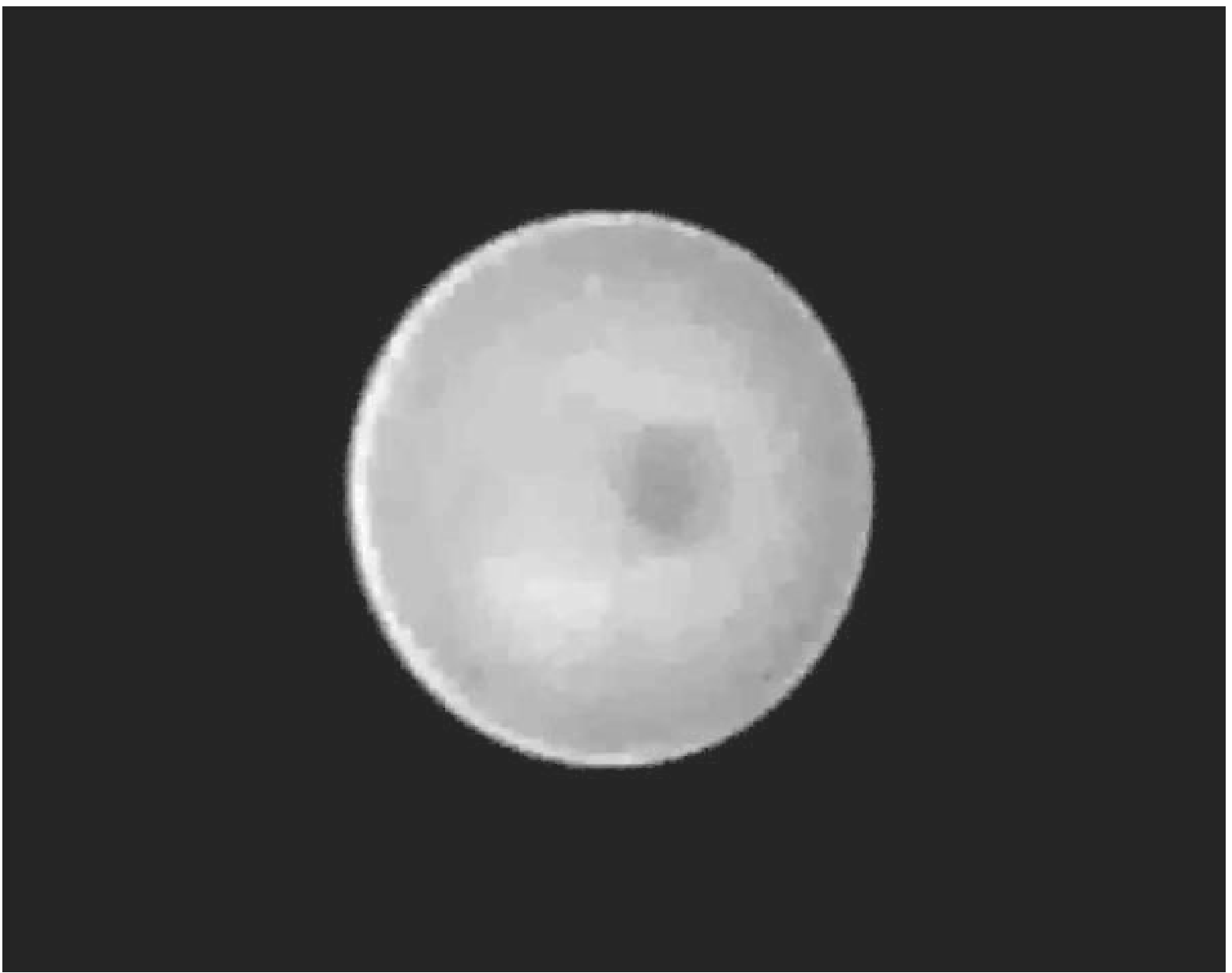} }
\subfigure[Cubic, SNR=26.50]{ \includegraphics[width=0.3\textwidth]{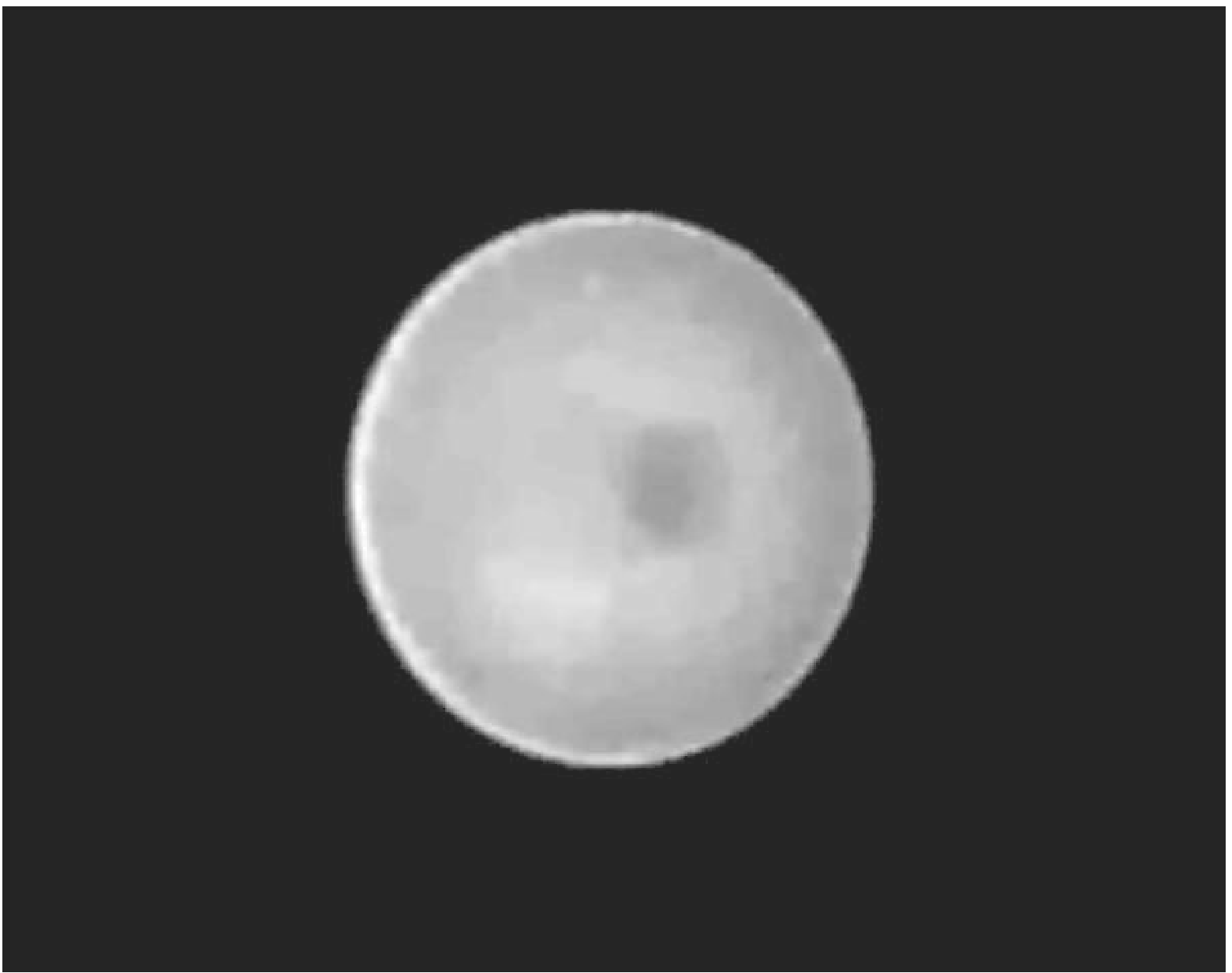} }
\end{center}
\caption{The comparison between the TV and wavelet frame based models for denoising a projection of the 3D Phantom. The first row shows the ground truth projection image, the noisy image and the denoised image by the TV model. The second row shows the denoised projection images by the Haar wavelet system, piecewise linear B-spline system and piecewise cubic B-spline system, respectively. }
\label{phantom128_proj}
\end{figure}

\begin{figure}
\begin{center}
\subfigure[Original]{ \includegraphics[width=0.3\textwidth]{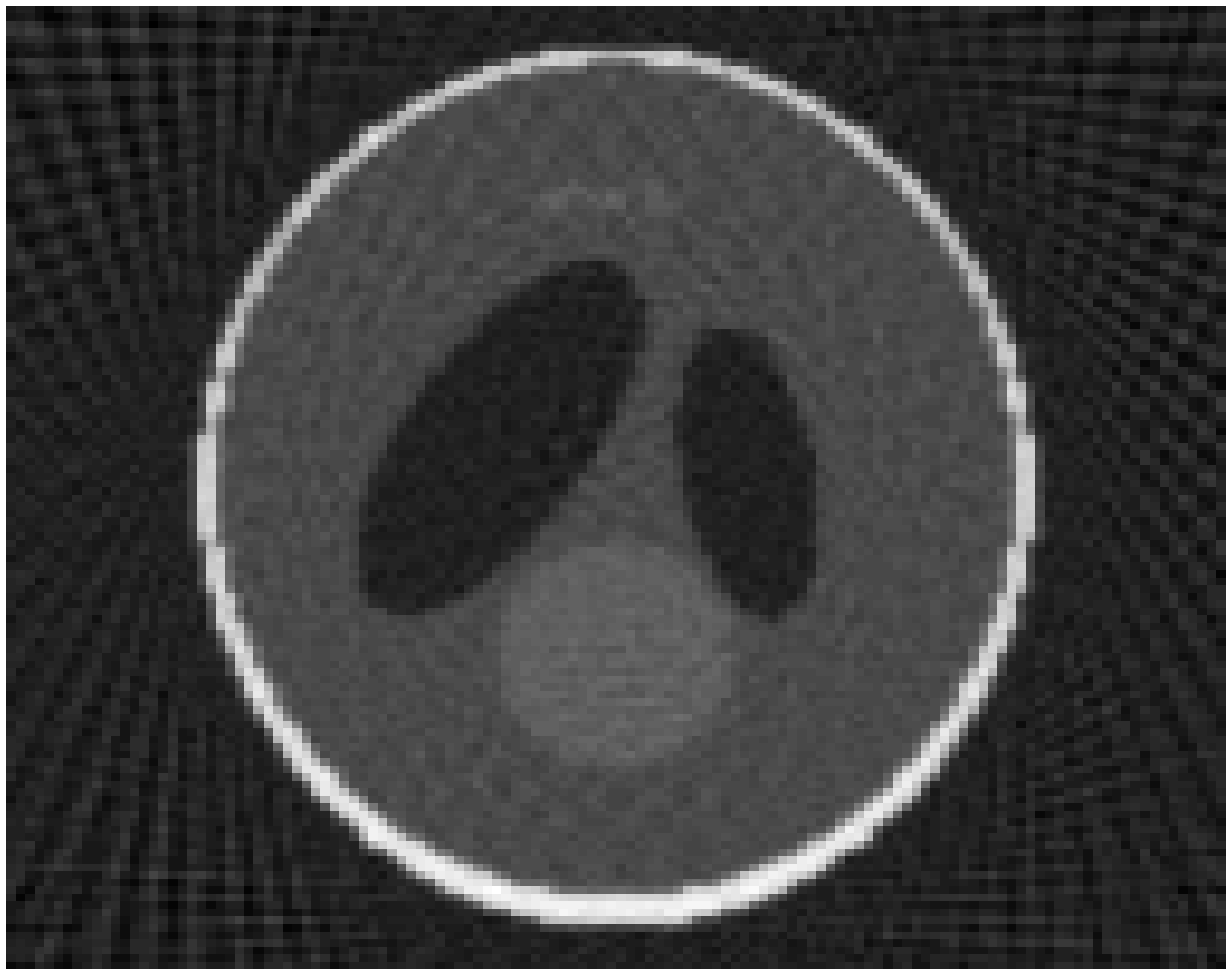} }
\subfigure[Noisy, error=346.33]{ \includegraphics[width=0.3\textwidth]{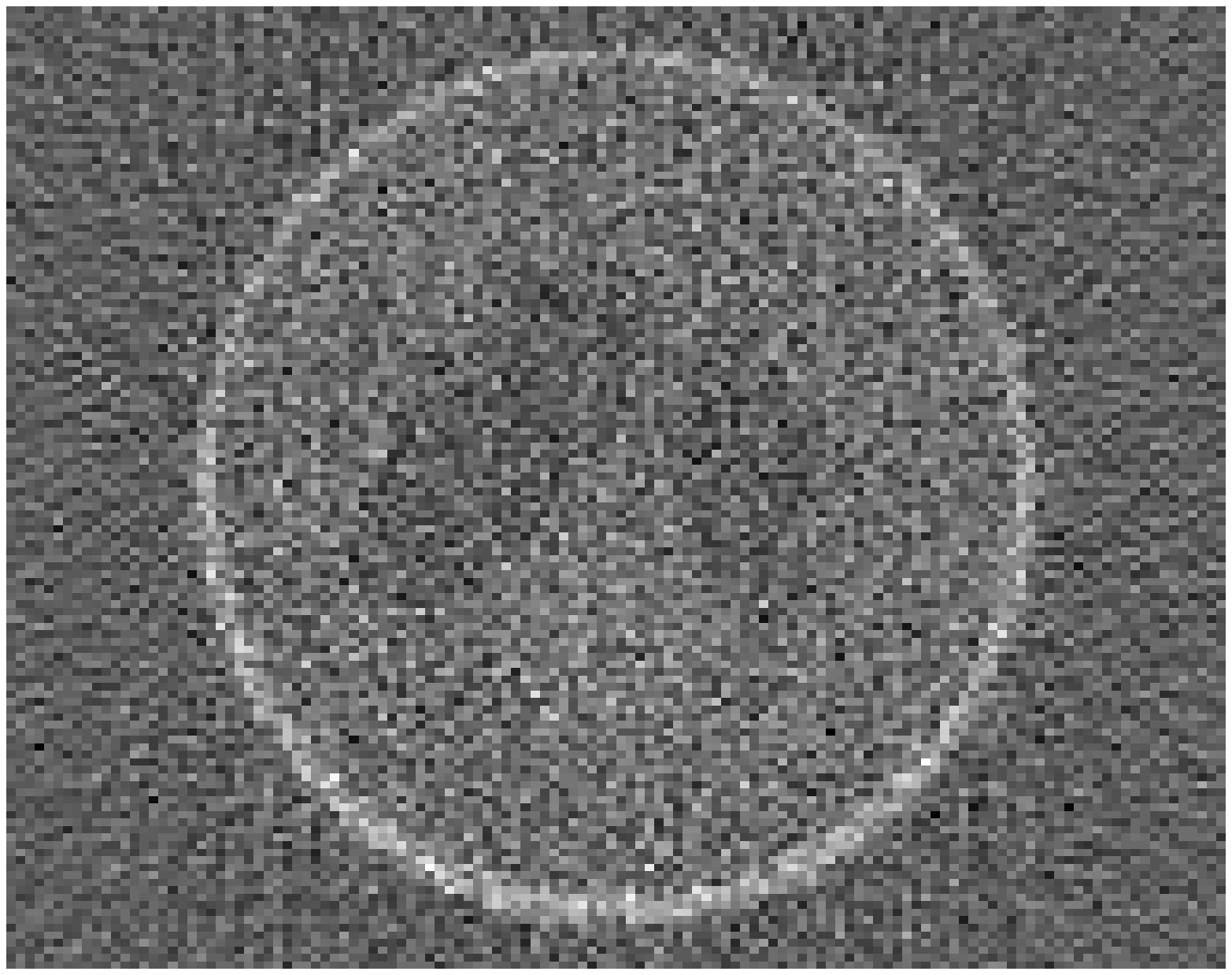} }
\subfigure[TV, error=56.98]{ \includegraphics[width=0.3\textwidth]{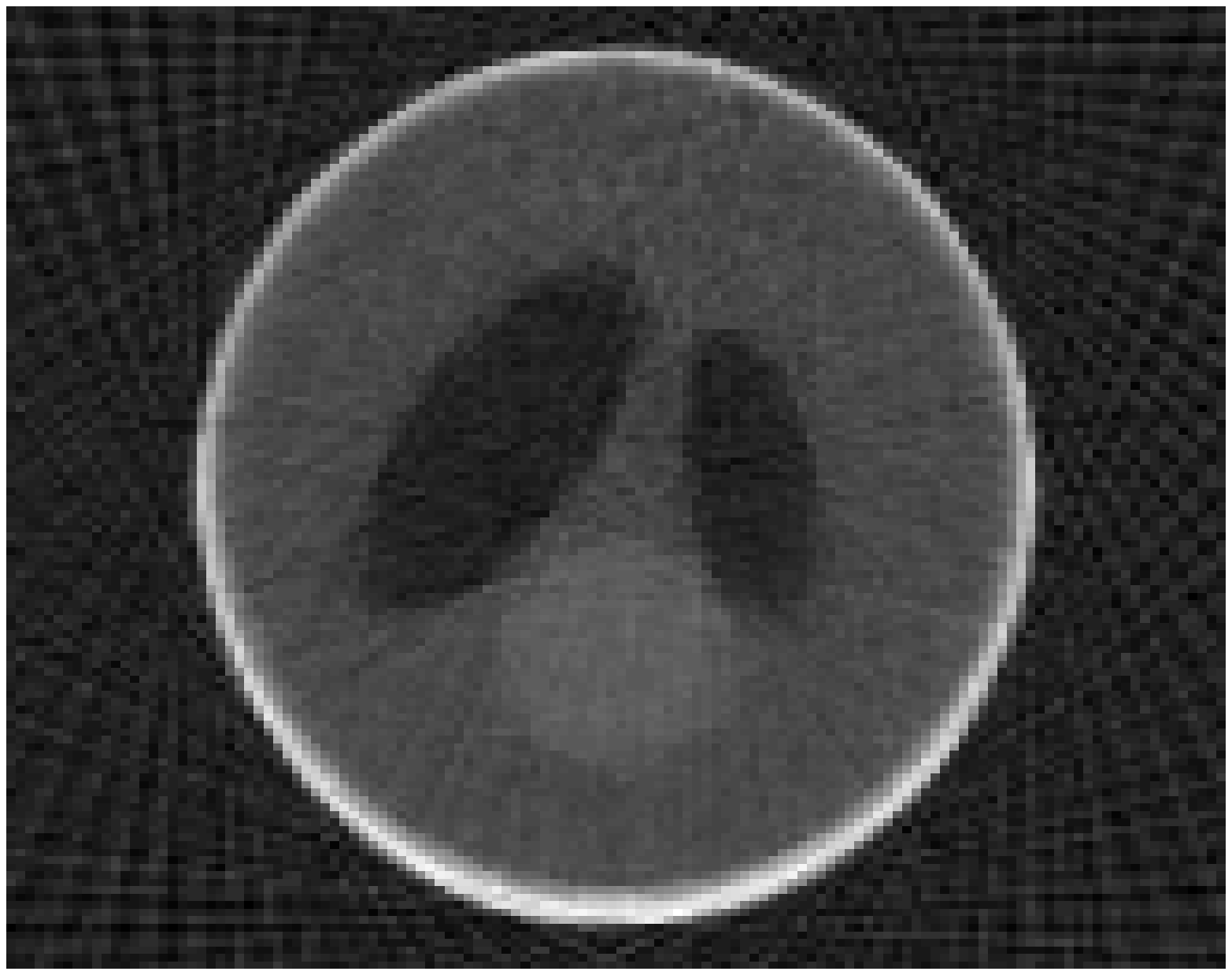} }
\subfigure[Haar, error=56.65]{ \includegraphics[width=0.3\textwidth]{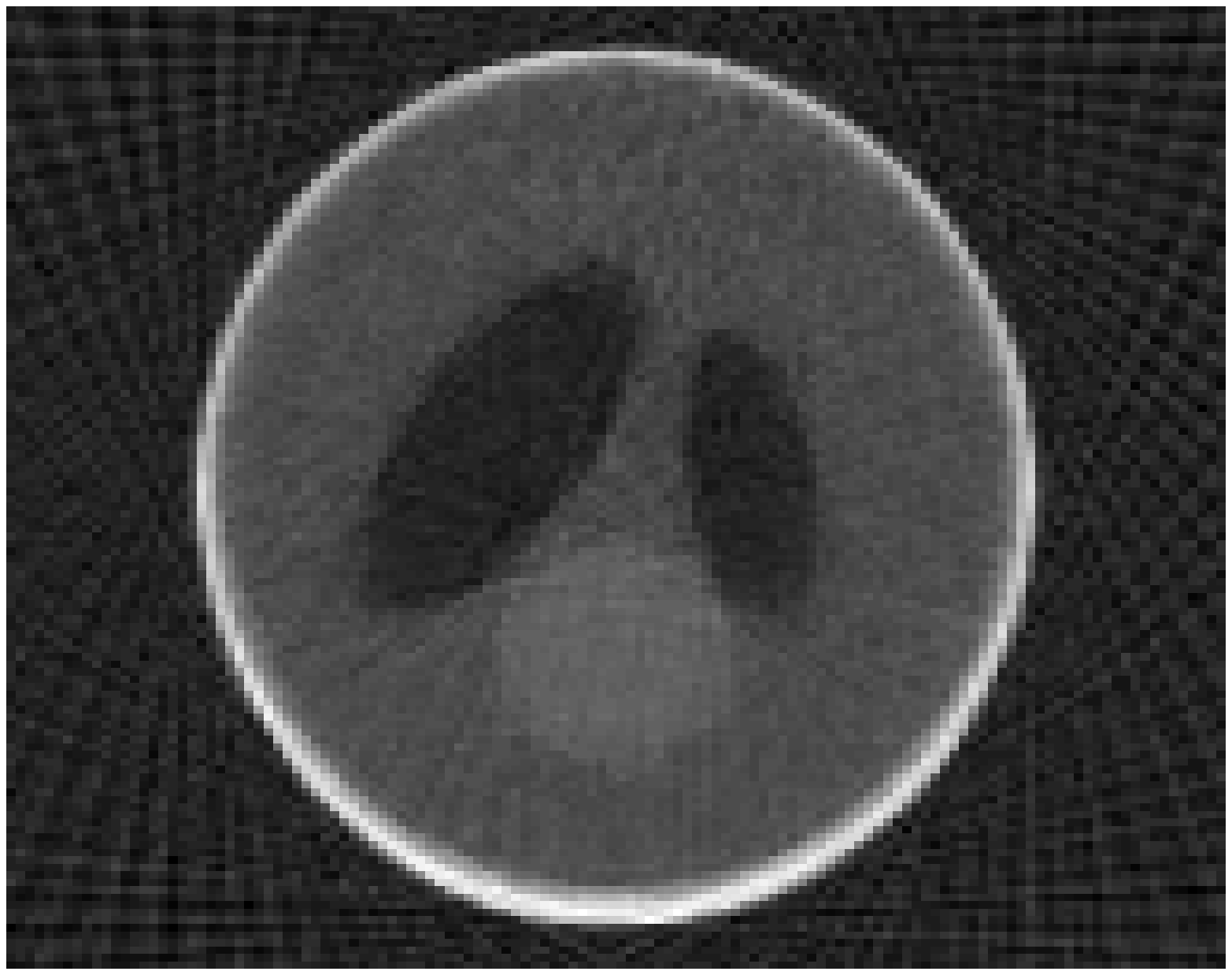} }
\subfigure[Linear, error=49.47]{ \includegraphics[width=0.3\textwidth]{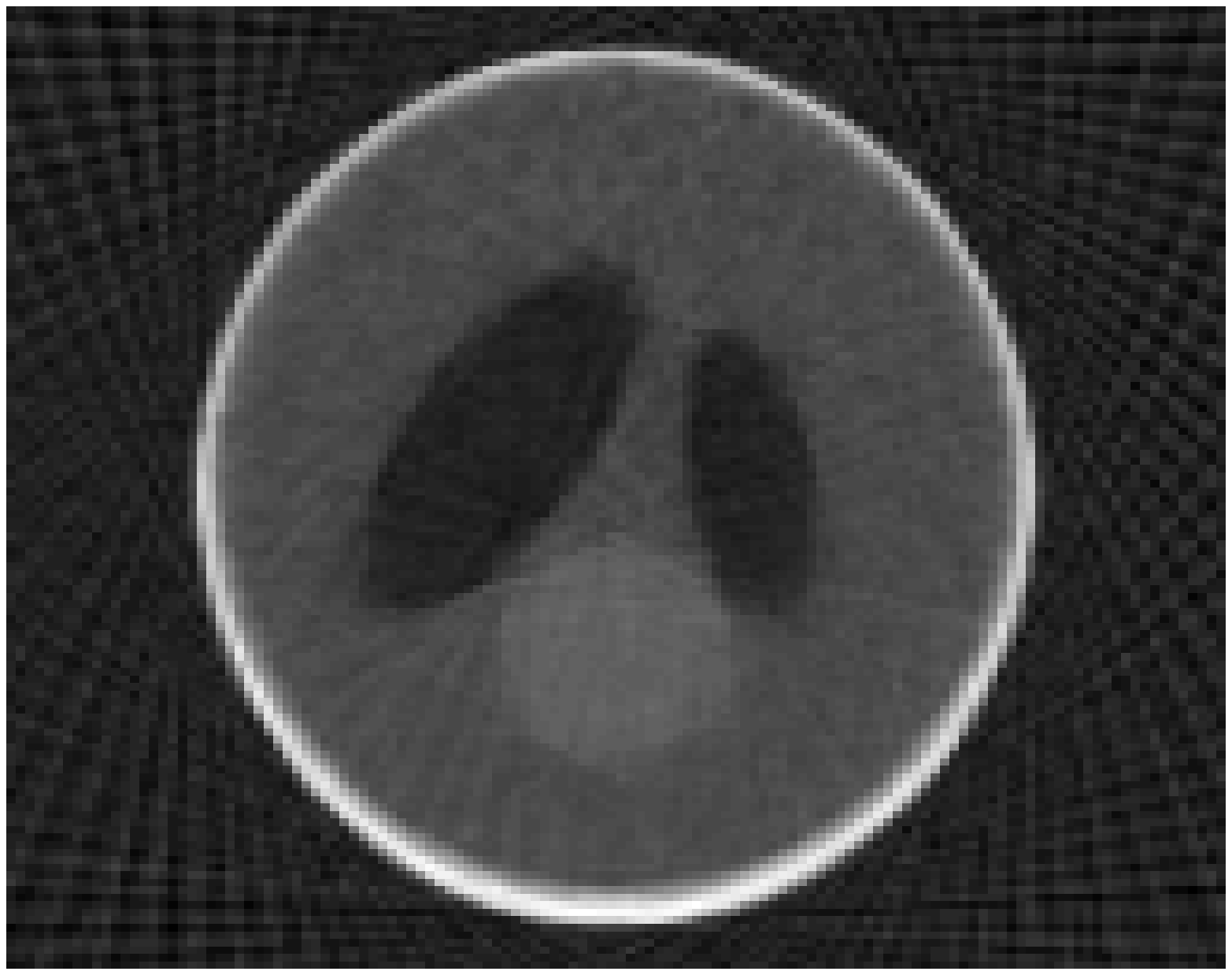} }
\subfigure[Cubic, error=44.11]{ \includegraphics[width=0.3\textwidth]{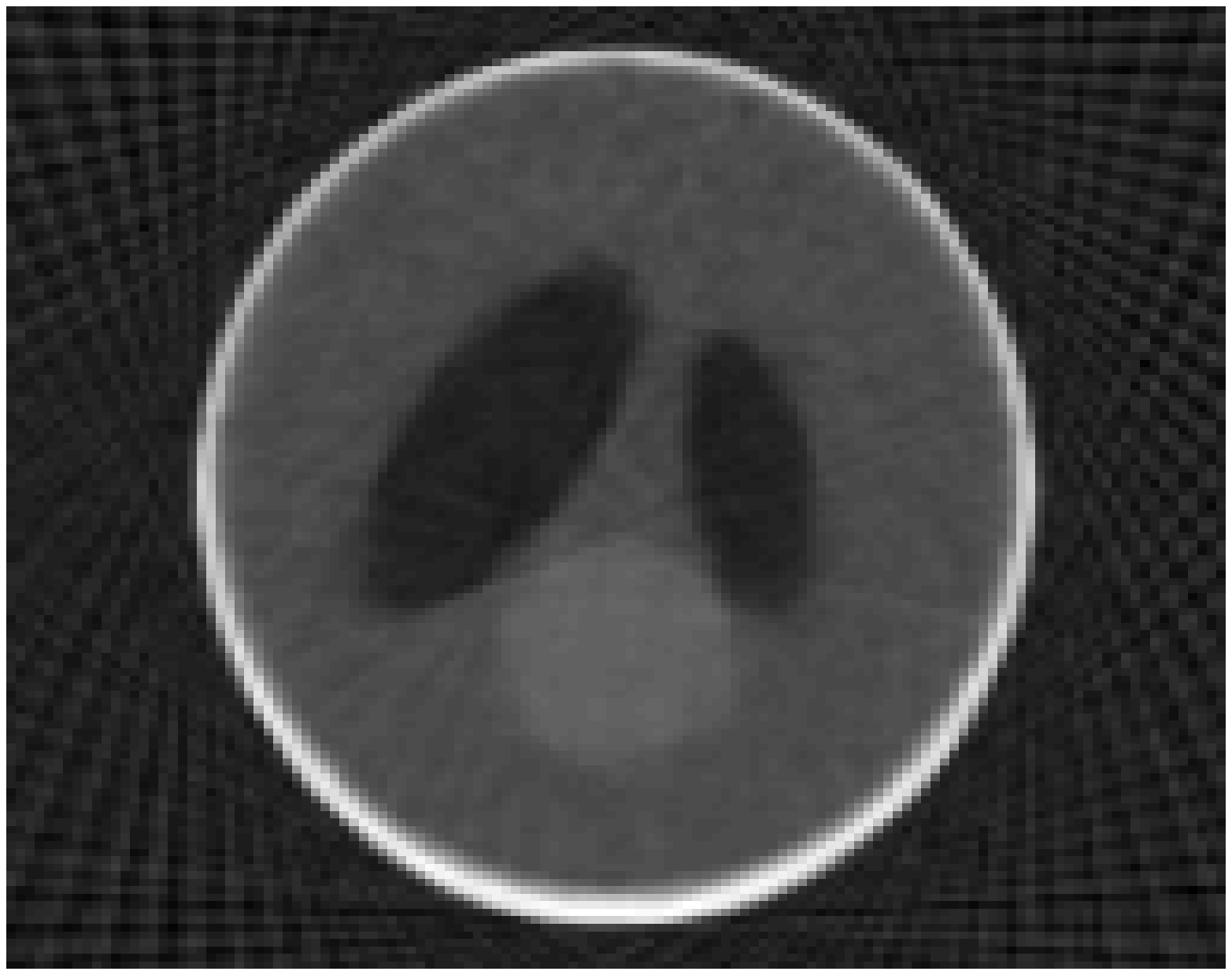} }
\end{center}
\caption{The comparison of the 64'th slice along z-direction of reconstructed 3D phantom object. The first row shows the ground truth, the objects from noisy projections and denoised projections by the TV model, respectively. The second row shows objects reconstructed with the denoised projections by the Haar wavelet system, the piecewise linear B-spline system and the piecewise cubic B-spline system, respectively.} \label{phantom128_rec}
\end{figure}
\end{eg}

\begin{eg}
{The data used for this experiment is the 3D medical data of human head with size $401 \times 401 \times 401$ from Cone beam CT. Totally there are 84 projections with size $800 \times 700$. Poisson noise is added to each projection by using Matlab ``imnoise" function with a scaling factor $5 \times 10^{-6}$. These projections are then processed by the TV and frame based denoising models \eqref{eq-TVKL-model} and \eqref{eq-WaveletFrameKL-model}. The noisy projections and various denoised versions are used to reconstruct the density function of the 3D Phantom. The reconstruction is done by backprojection method with filter ``hamming". Fig. \ref{medical251_proj} shows the 30'th projection denoised by the TV model, the Haar wavelet system, the piecewise linear and piecewise cubic B-spline system, respectively. Both the piecewise linear and piecewise cubic B-spline systems return better results than the Haar wavelet system and the TV model. Fig. \ref{medical251_rec} shows the 120'th slice of the reconstructed volume along z-direction and Fig. \ref{medical251_3d} gives the isosurface view of the 3D density function with function value 0.09. }

\begin{figure}
\begin{center}
\subfigure[Original]{ \includegraphics[width=0.3\textwidth]{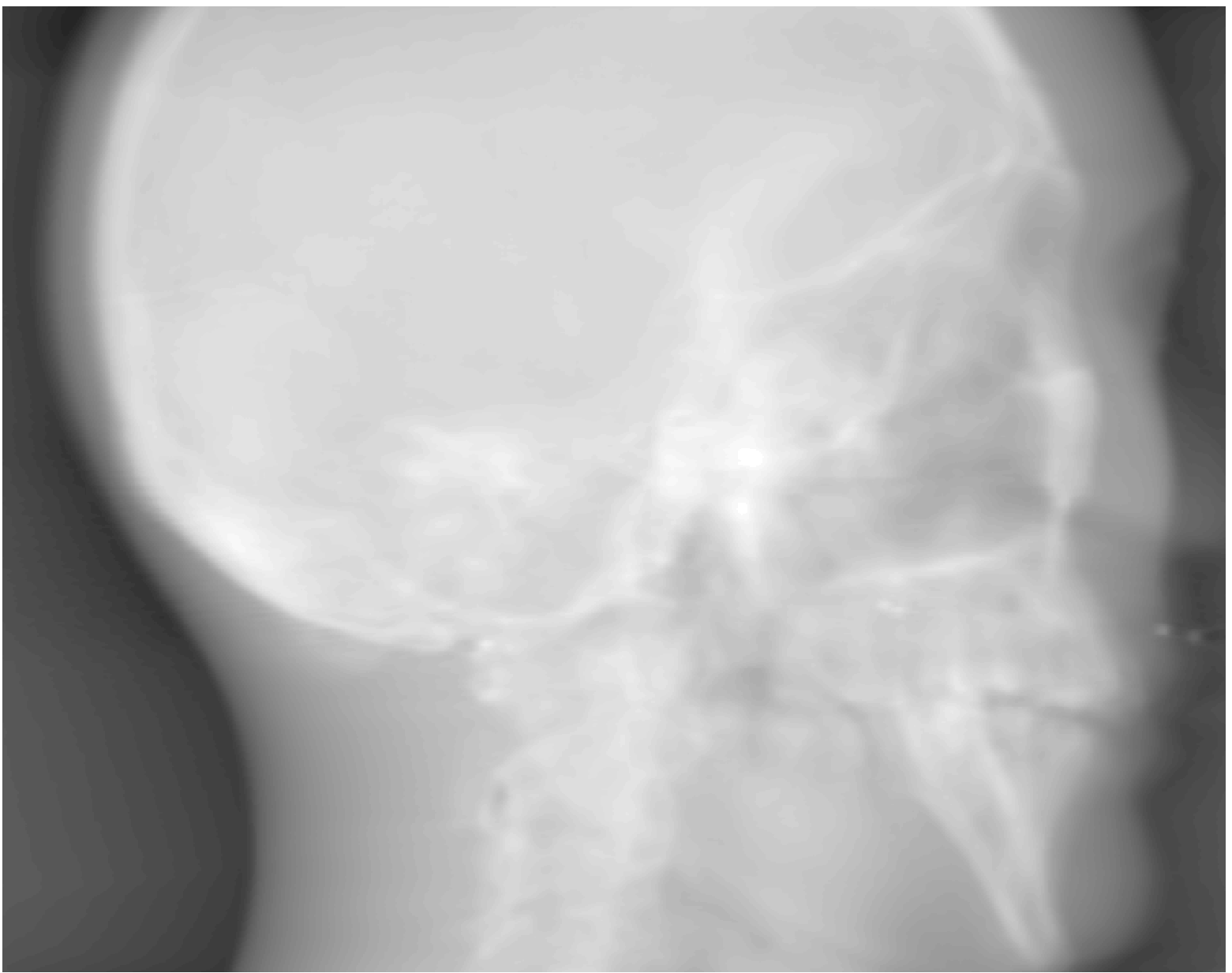} }
\subfigure[Noisy, SNR=7.77]{ \includegraphics[width=0.3\textwidth]{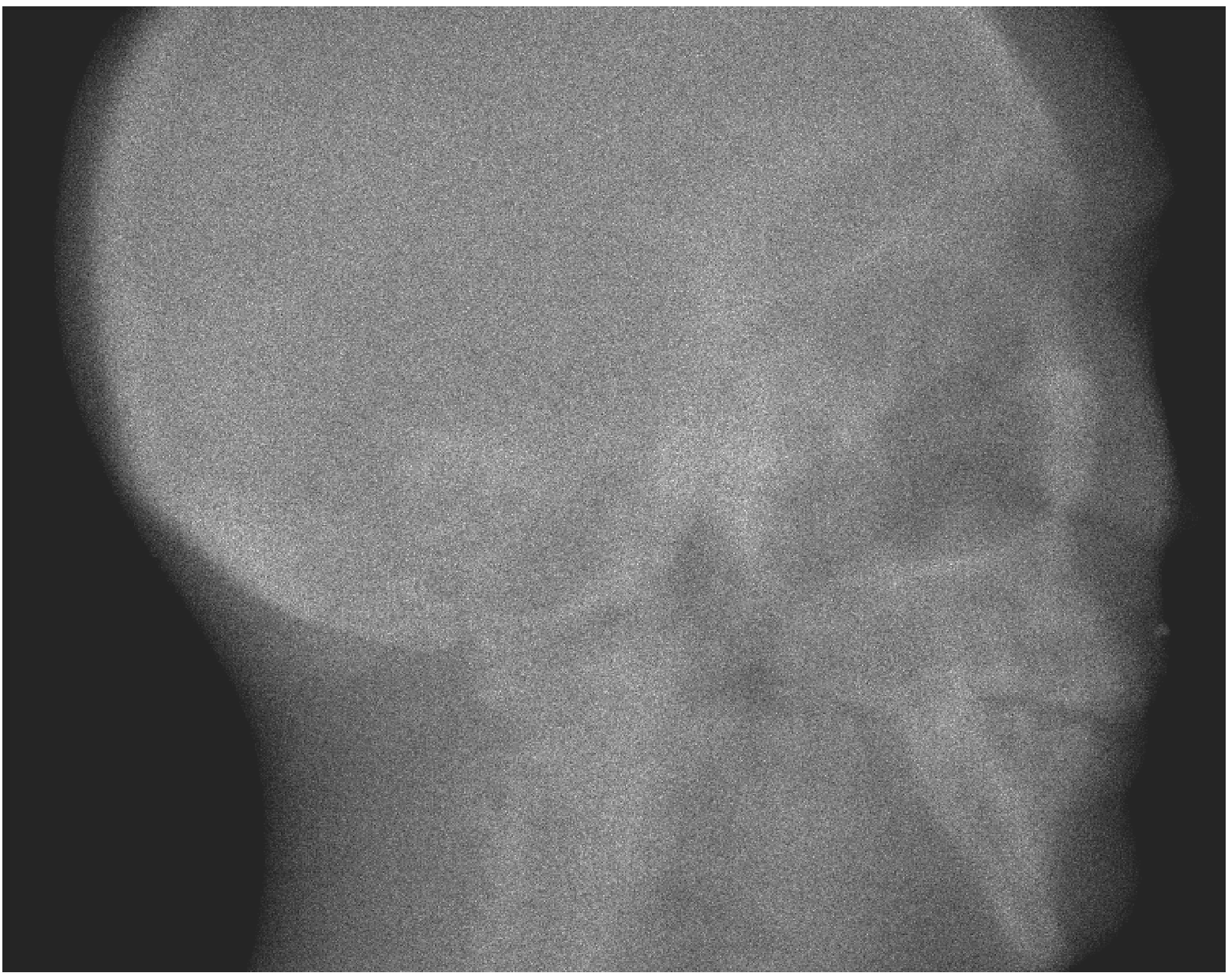} }
\subfigure[TV, SNR=13.07]{ \includegraphics[width=0.3\textwidth]{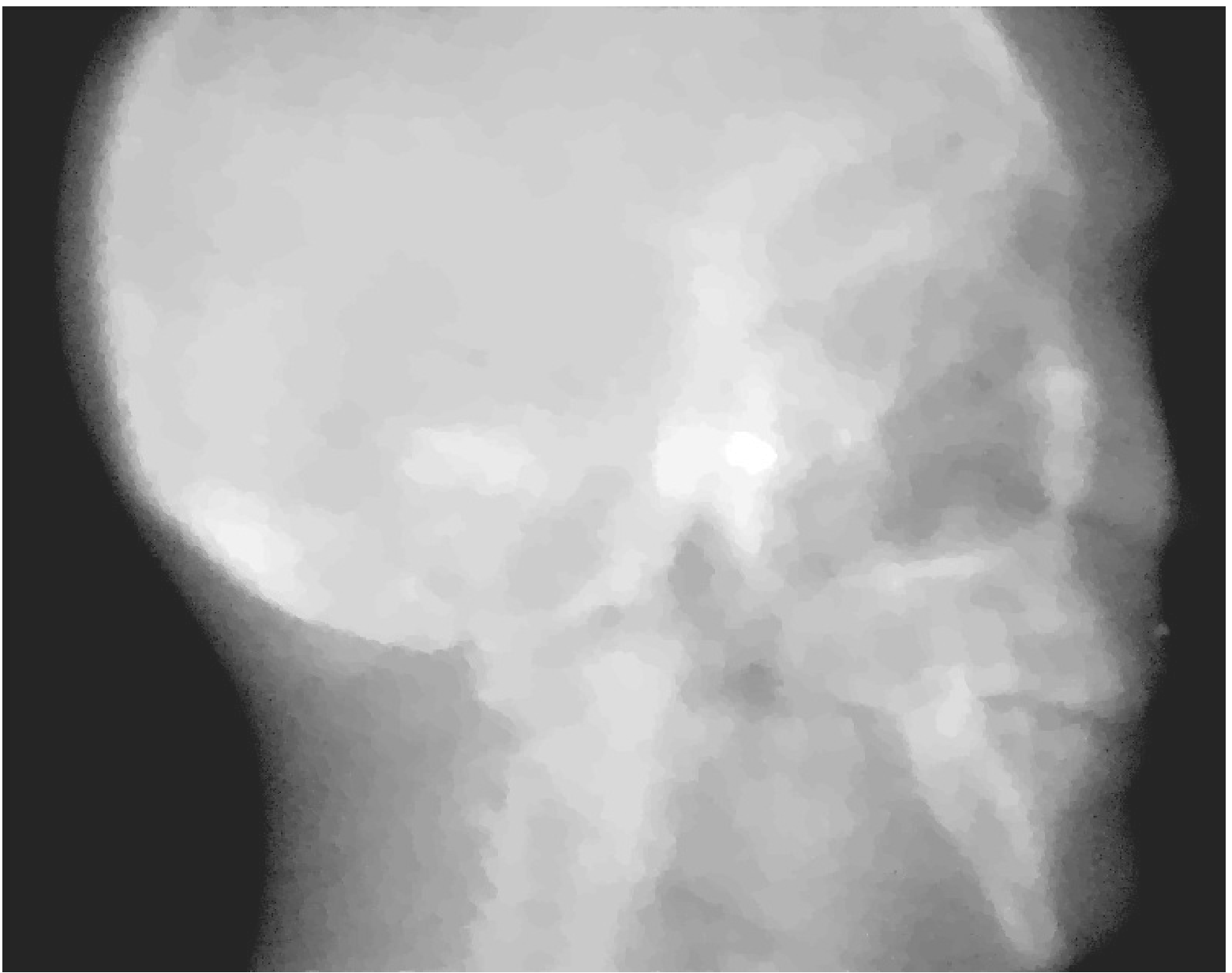} }
\subfigure[Haar, SNR=13.09]{ \includegraphics[width=0.3\textwidth]{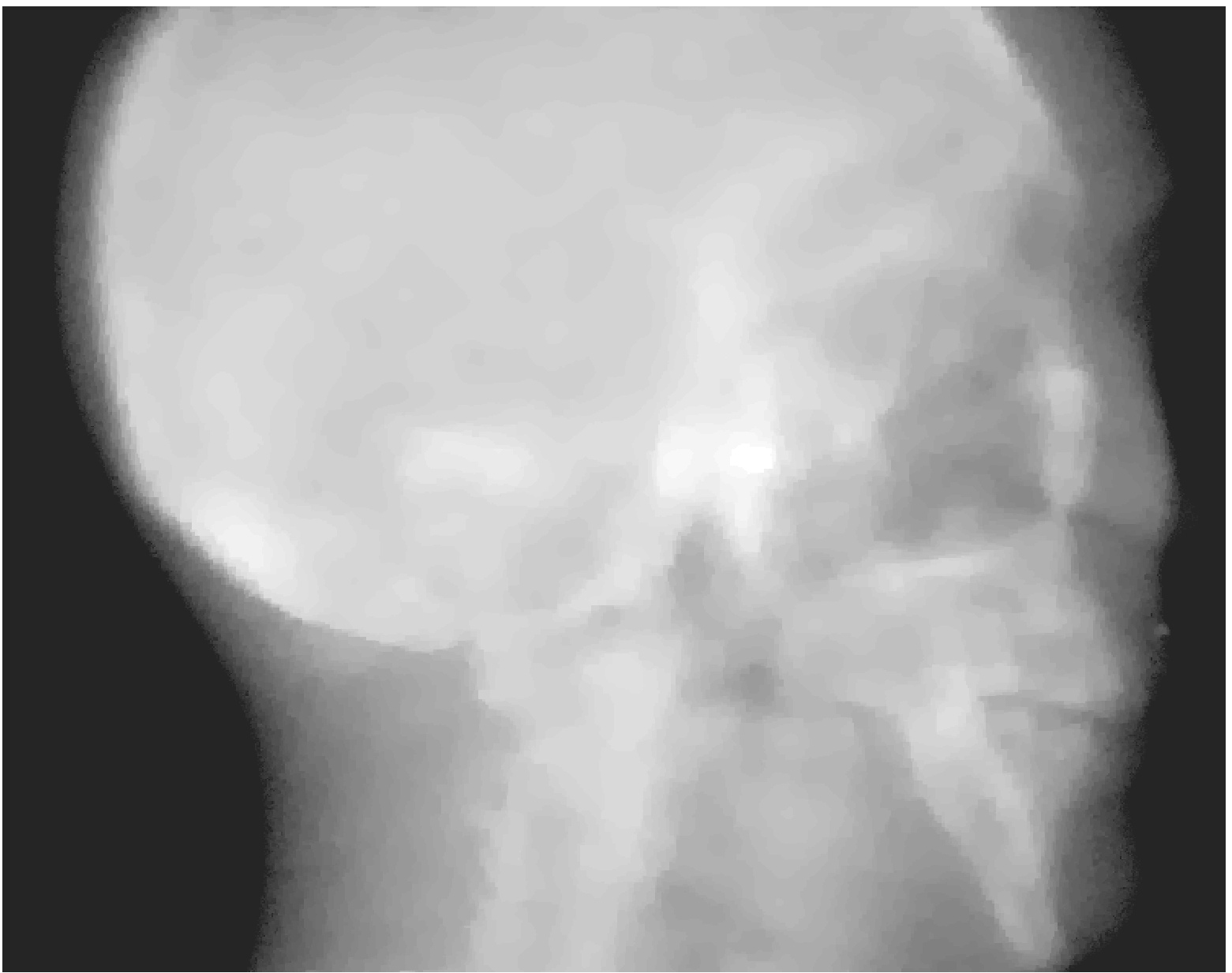} }
\subfigure[Linear, SNR=13.10]{ \includegraphics[width=0.3\textwidth]{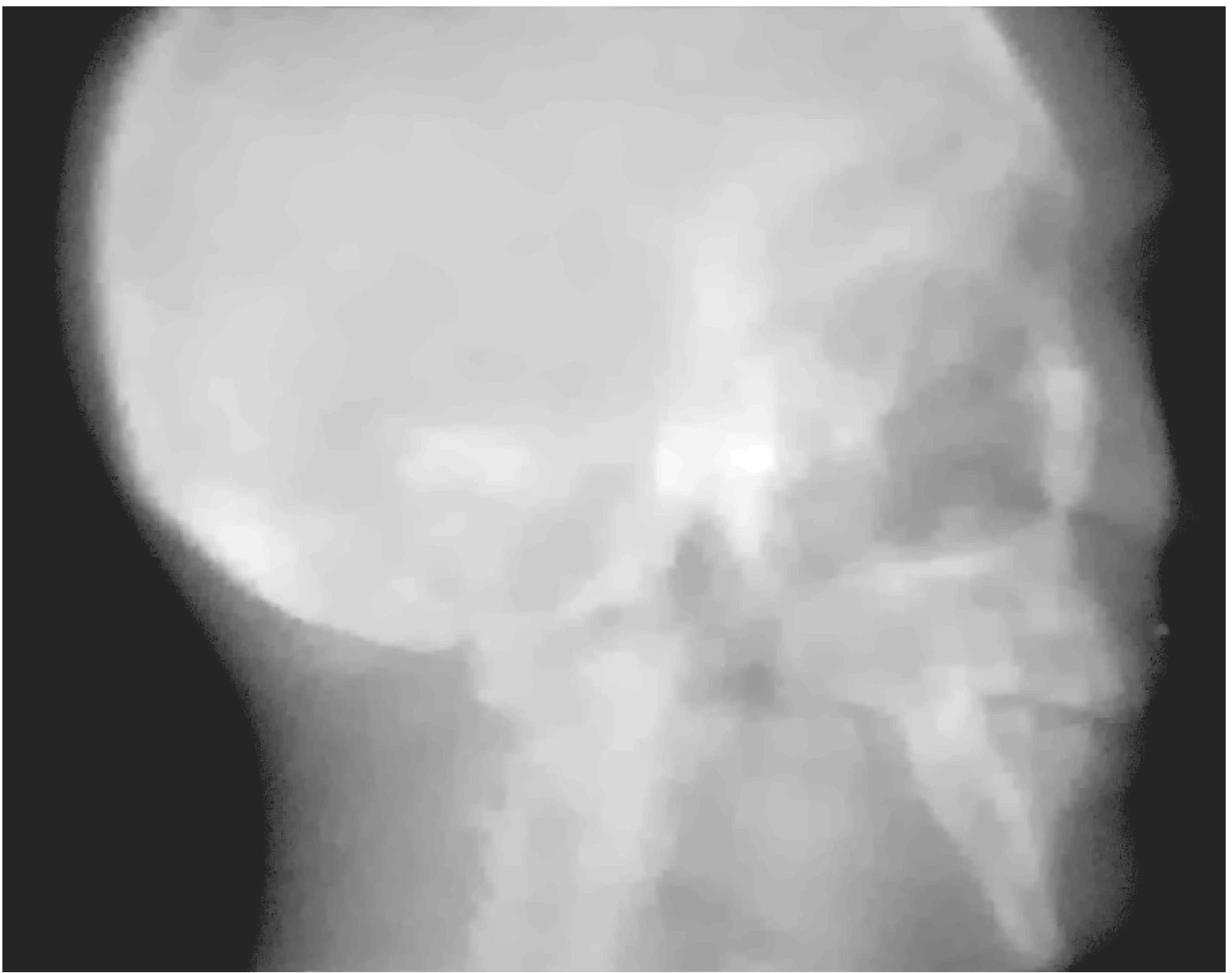} }
\subfigure[Cubic, SNR=13.11]{ \includegraphics[width=0.3\textwidth]{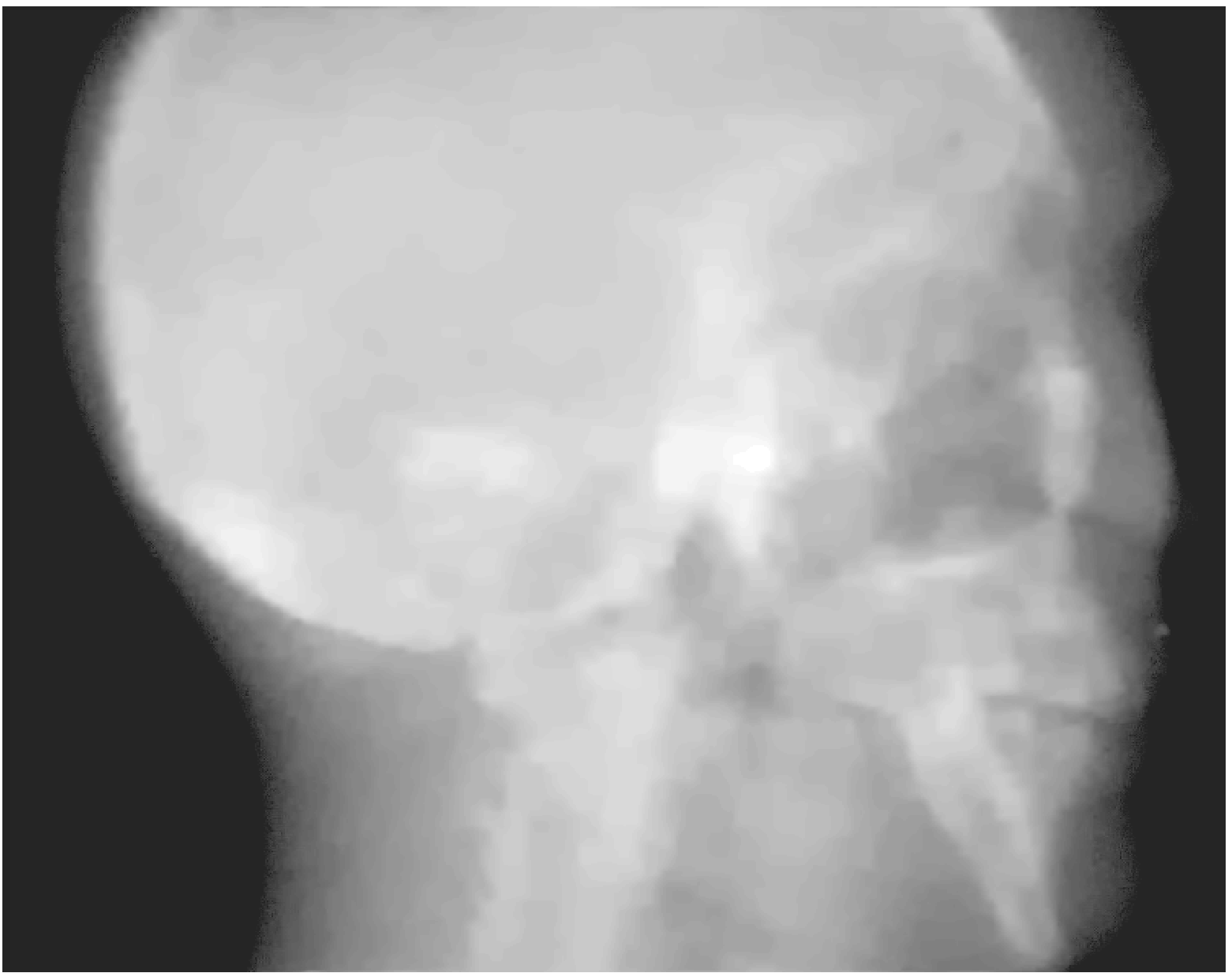} }
\end{center}
\caption{ The comparison between the TV and wavelet frame based models for denoising a projection of the 3D medical data. The first row shows the ground truth, the noisy image and the denoised result of the TV model. The second row shows the denoised results of the Haar wavelet system, the piecewise linear B-spline system and the piecewise cubic B-spline system, respectively. }
\label{medical251_proj}
\end{figure}

\begin{figure}
\begin{center}
\subfigure[Original]{ \includegraphics[width=0.3\textwidth]{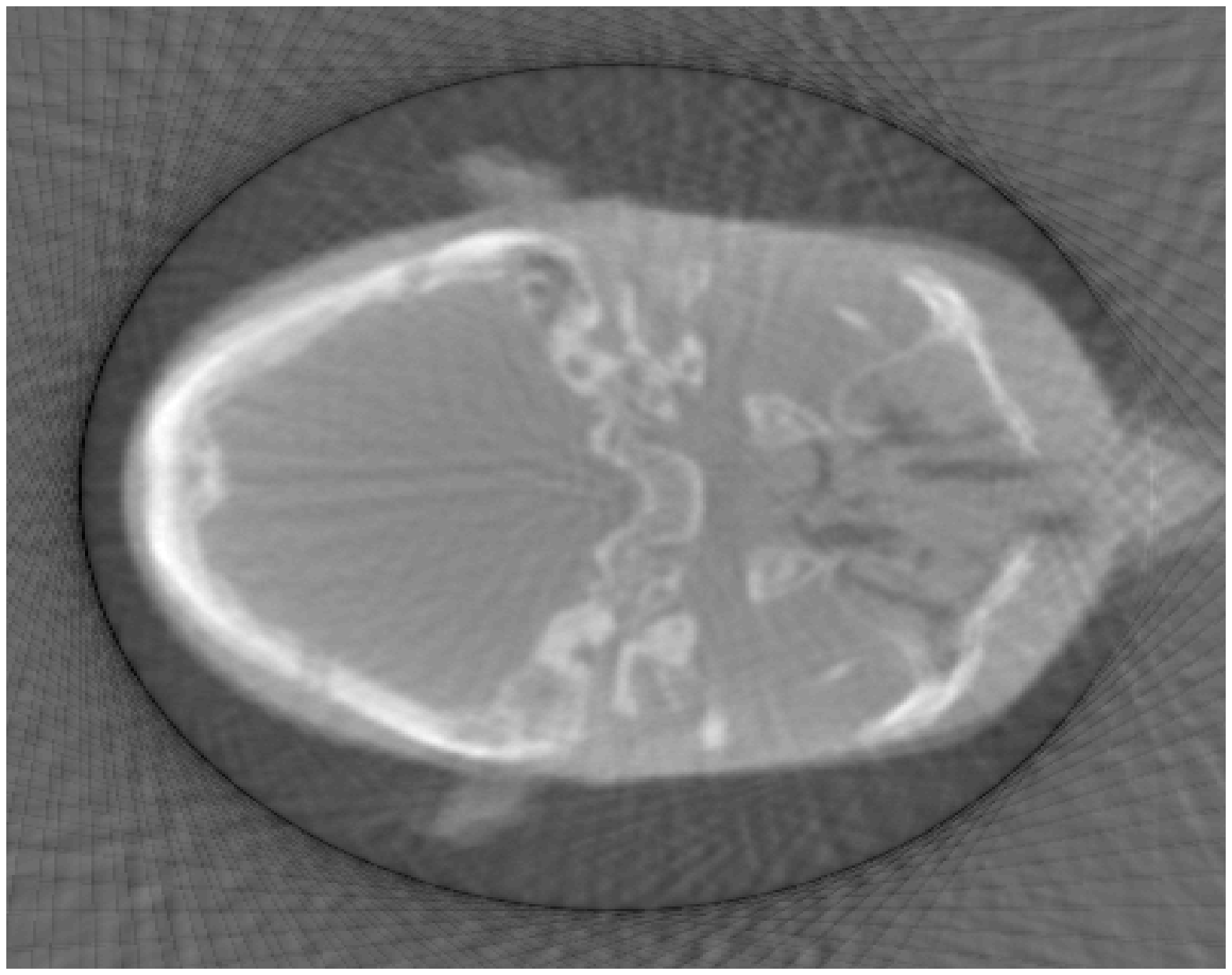} }
\subfigure[Noisy, error=782.50]{ \includegraphics[width=0.3\textwidth]{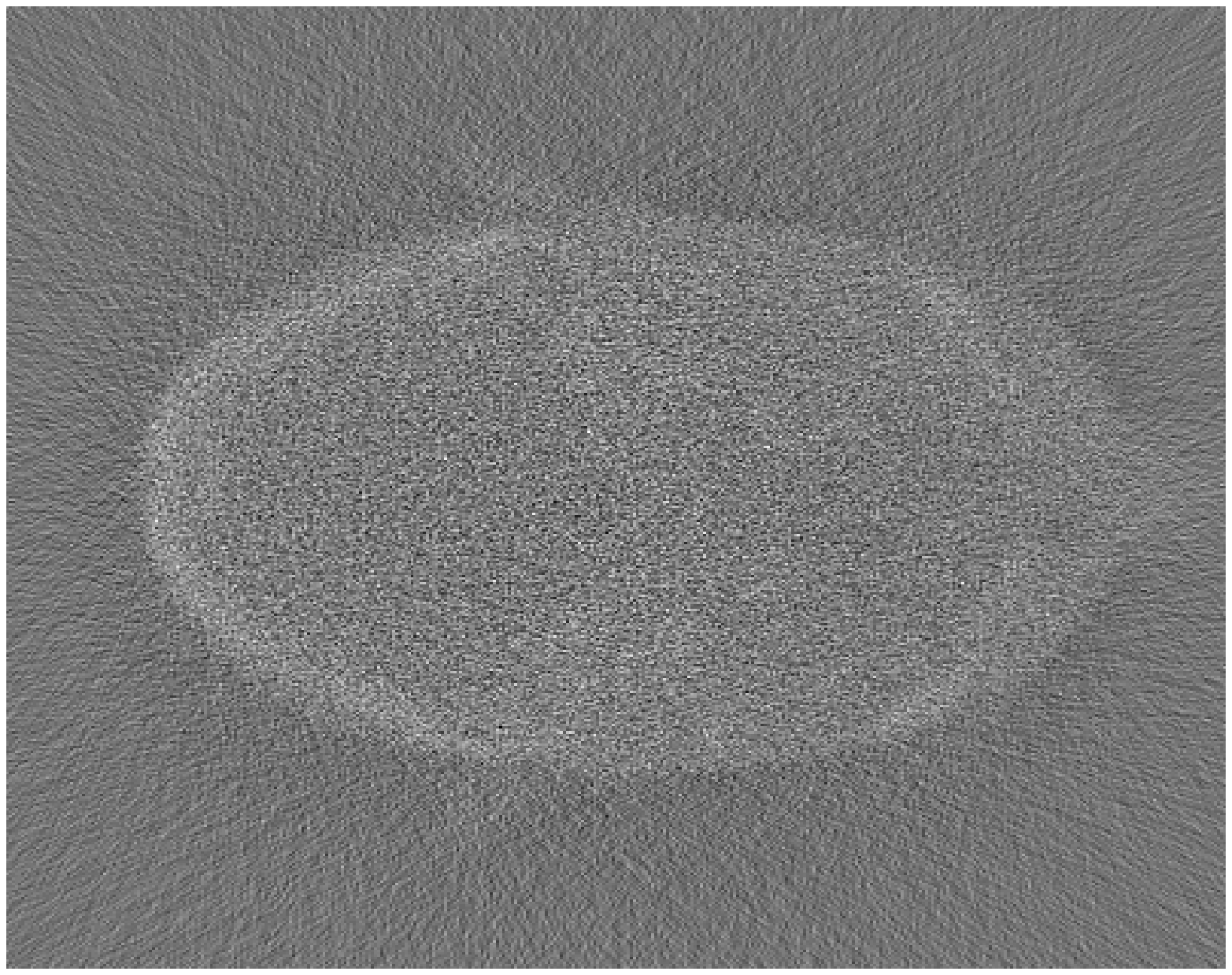} }
\subfigure[TV, error=145.18]{ \includegraphics[width=0.3\textwidth]{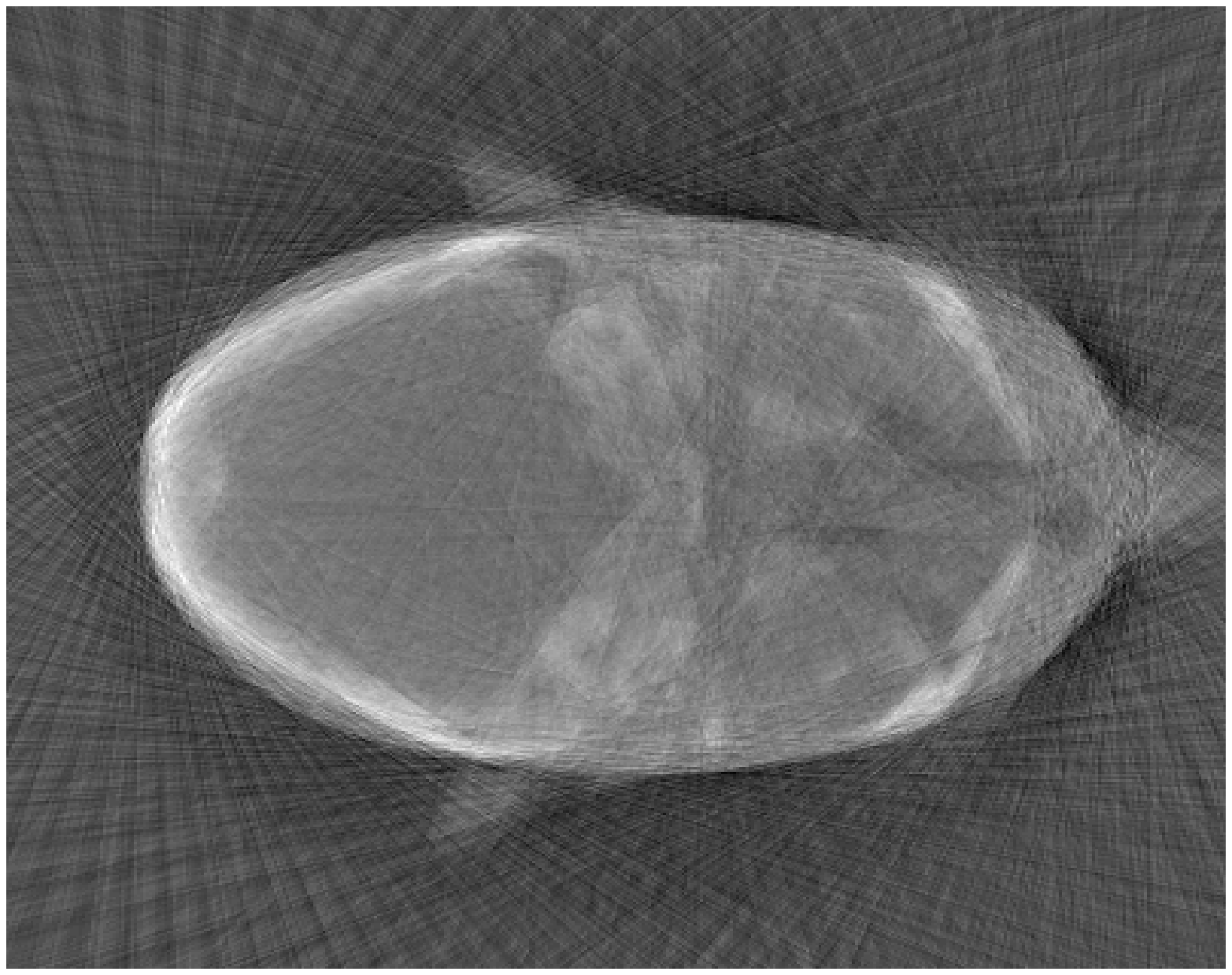} }
\subfigure[Haar, error=141.66]{ \includegraphics[width=0.3\textwidth]{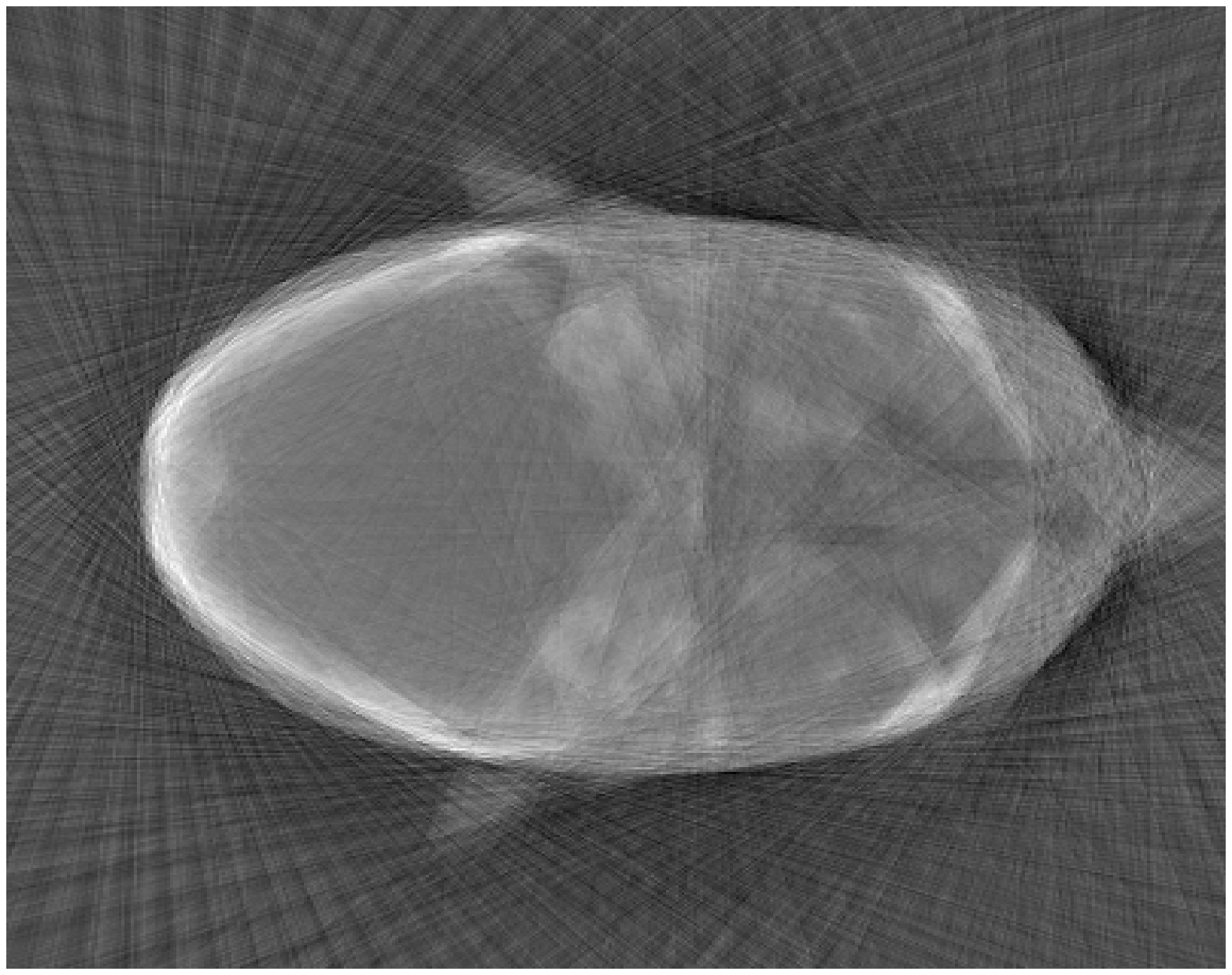} }
\subfigure[Linear, error=137.23]{ \includegraphics[width=0.3\textwidth]{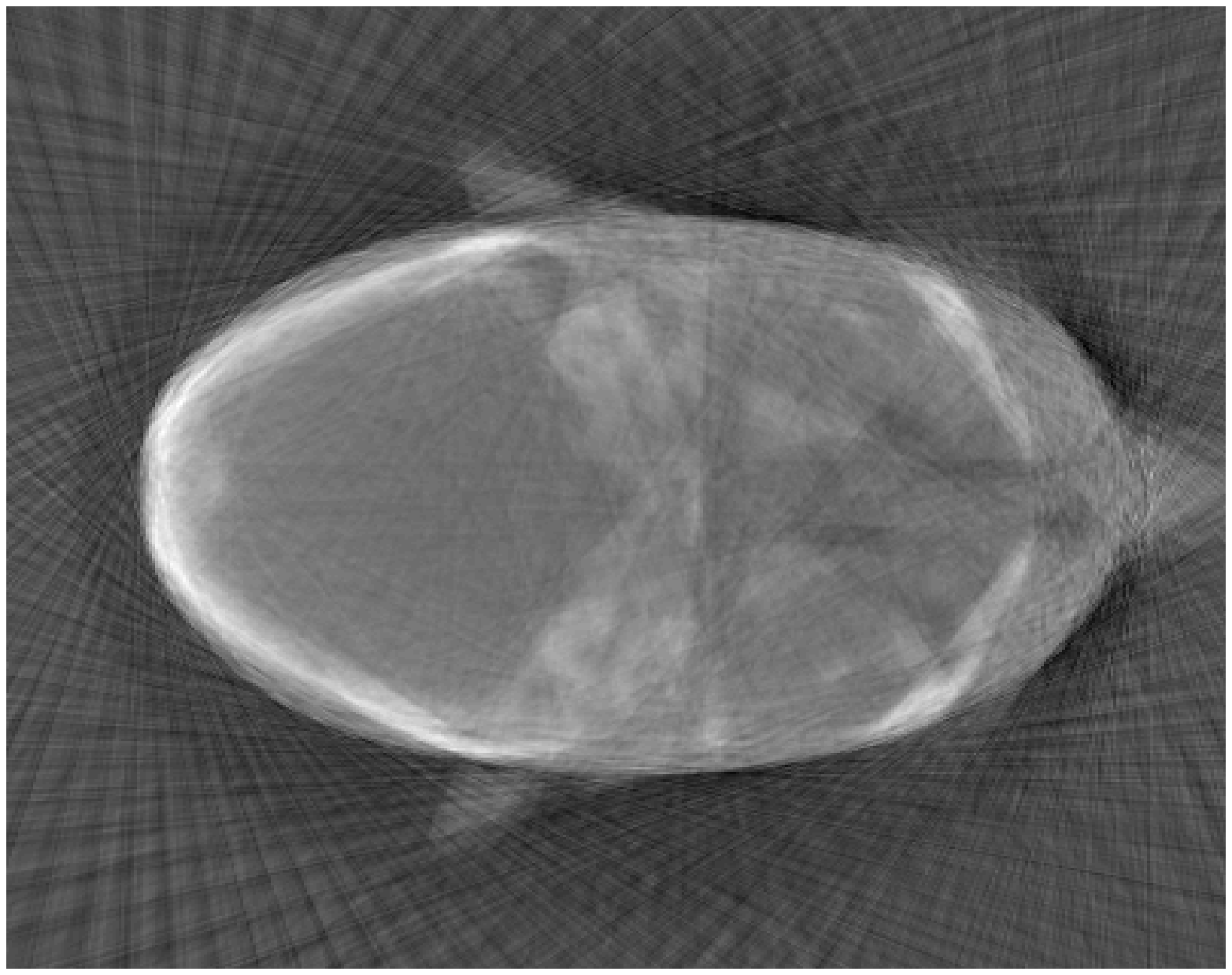} }
\subfigure[Cubic, error=134.83]{ \includegraphics[width=0.3\textwidth]{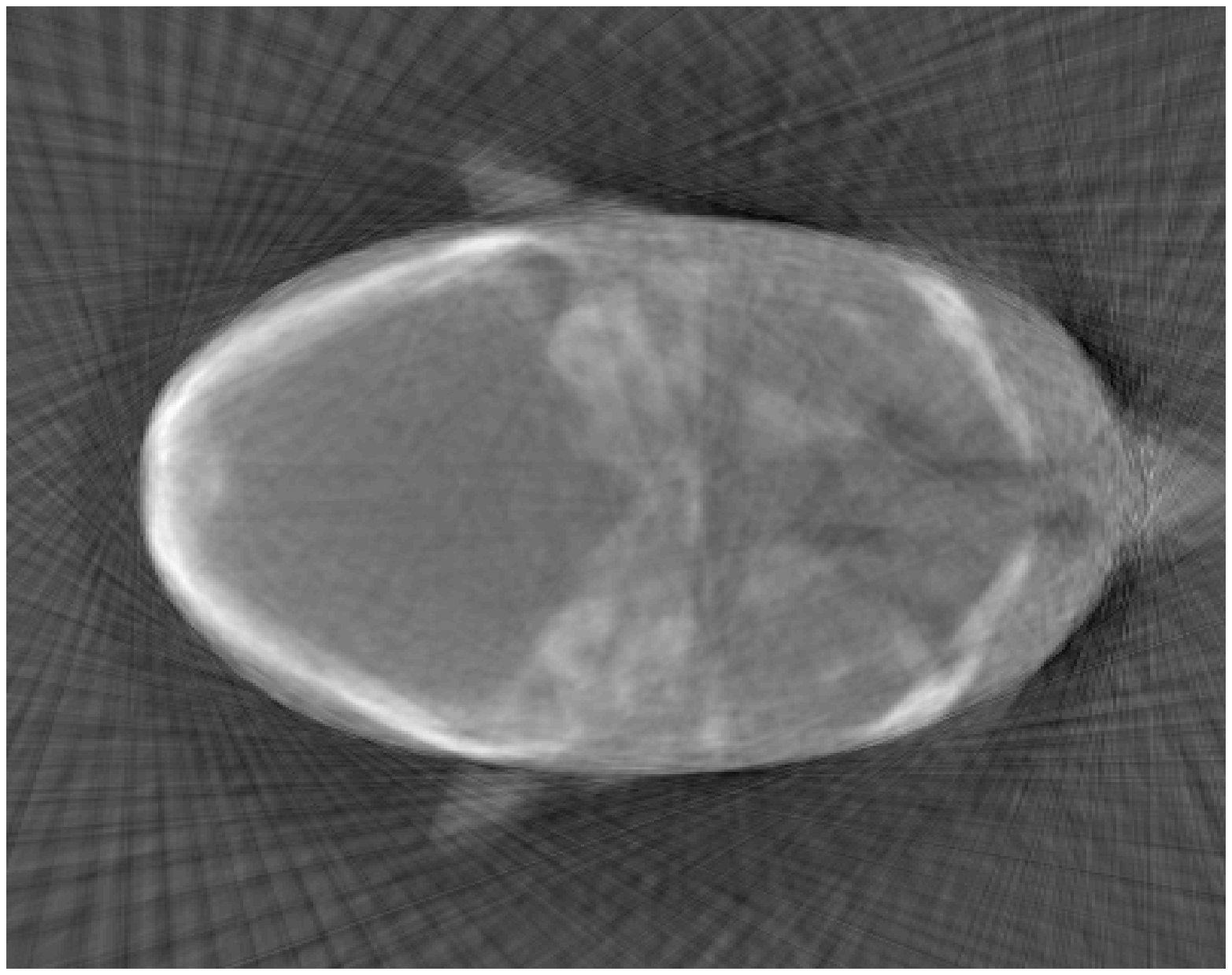} }
\end{center}
\caption{The comparison of the 120'th slice of the reconstructed 3D medical density function. The first row shows slices of the ground truth, the density functions from noisy projection images and denoised projections by the TV model, respectively. The second row shows the density functions from denoised images by the Haar wavelet system, the piecewise linear B-spline and piecewise cubic B-spline systems, respectively. } \label{medical251_rec}
\end{figure}

\begin{figure}
\begin{center}
\subfigure[Original]{ \includegraphics[width=0.23\textwidth]{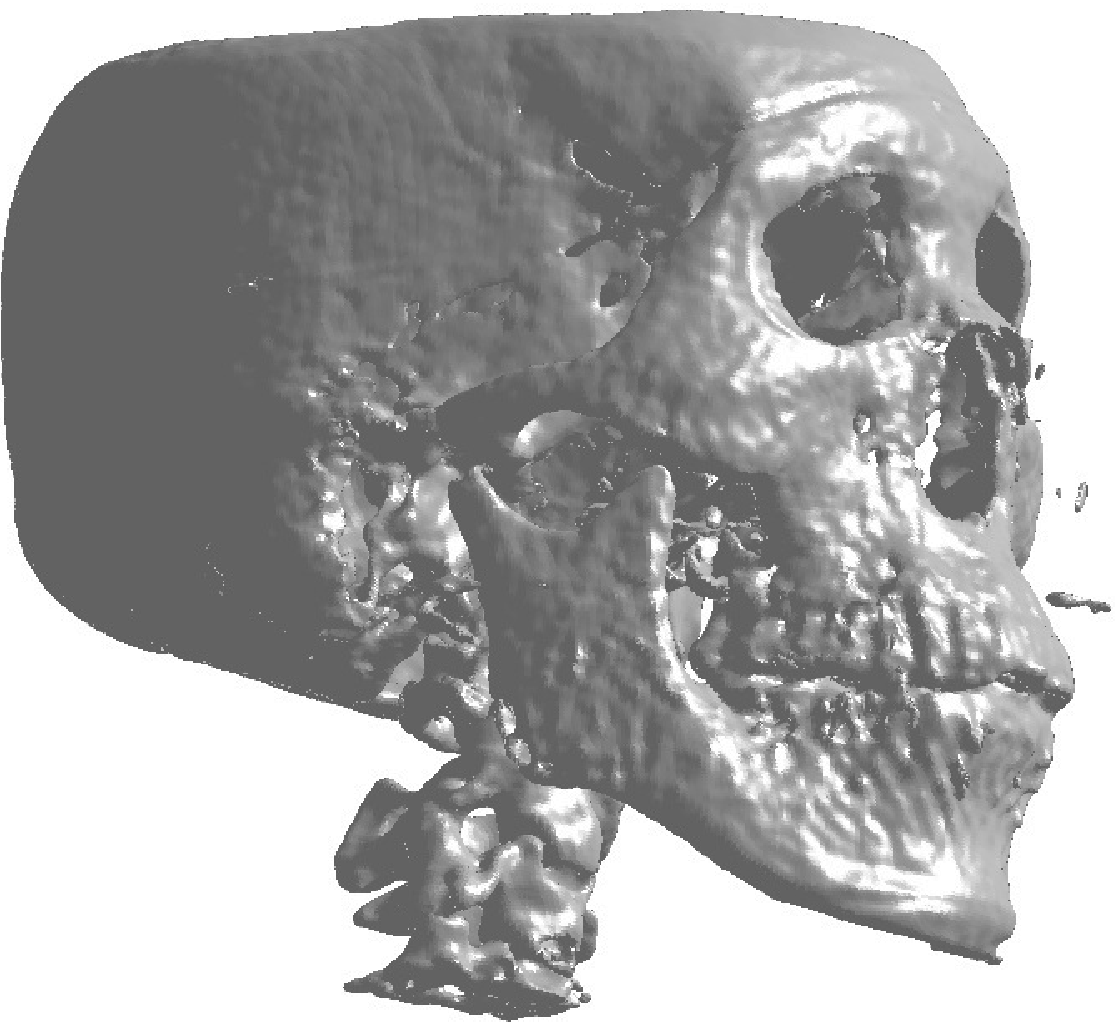}  }
\subfigure[Noisy]{ \includegraphics[width=0.35\textwidth]{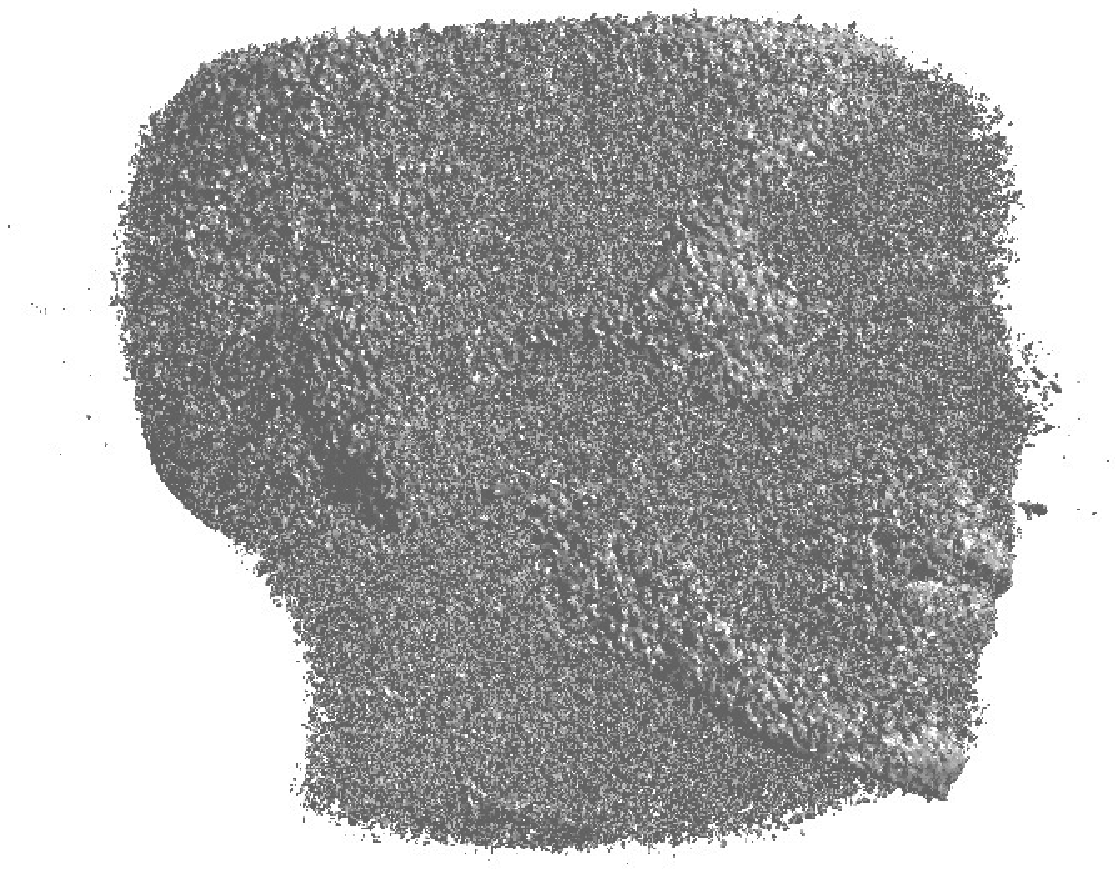} }
\subfigure[TV]{ \includegraphics[width=0.23\textwidth]{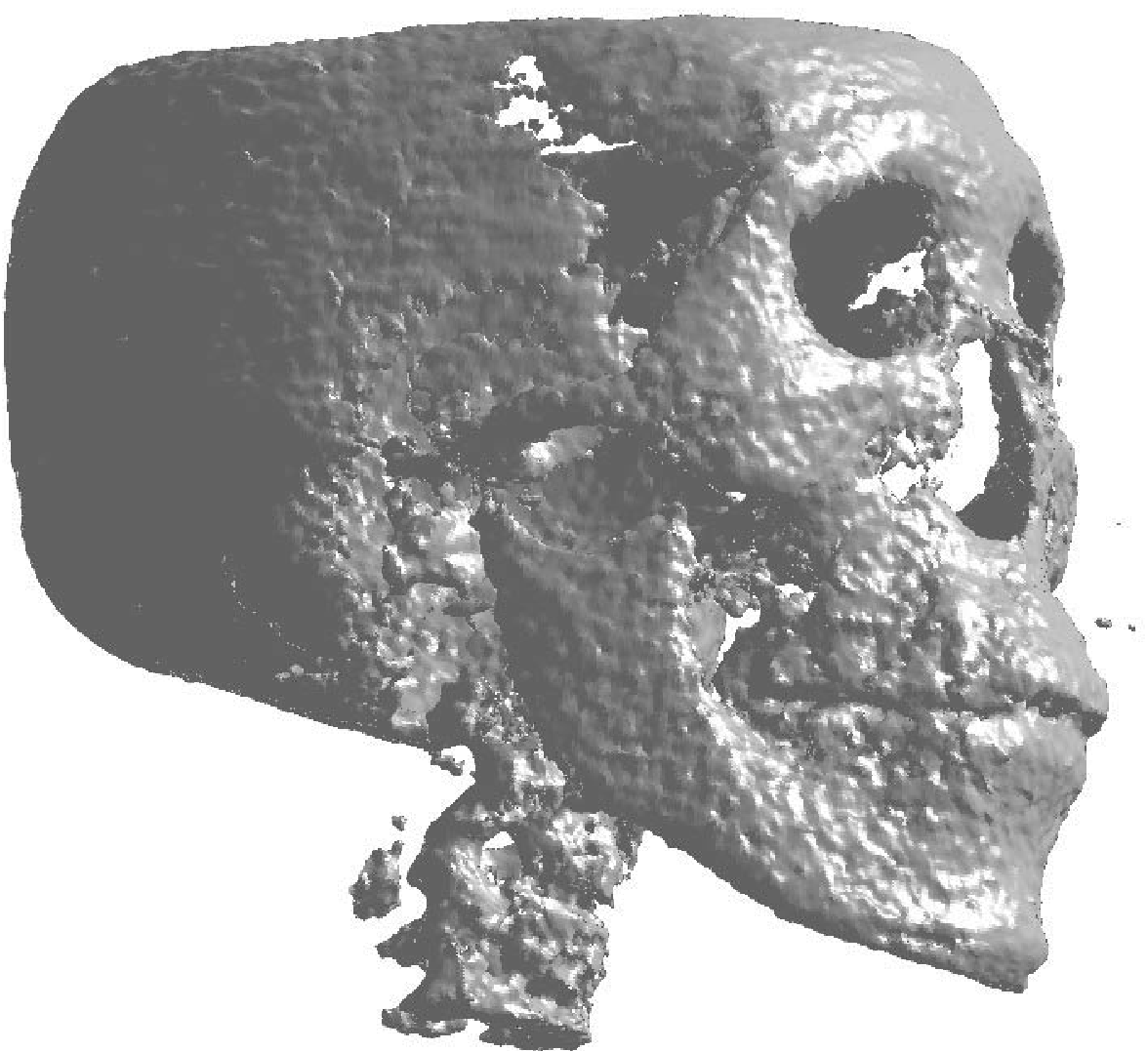} }
\subfigure[Haar]{ \includegraphics[width=0.23\textwidth]{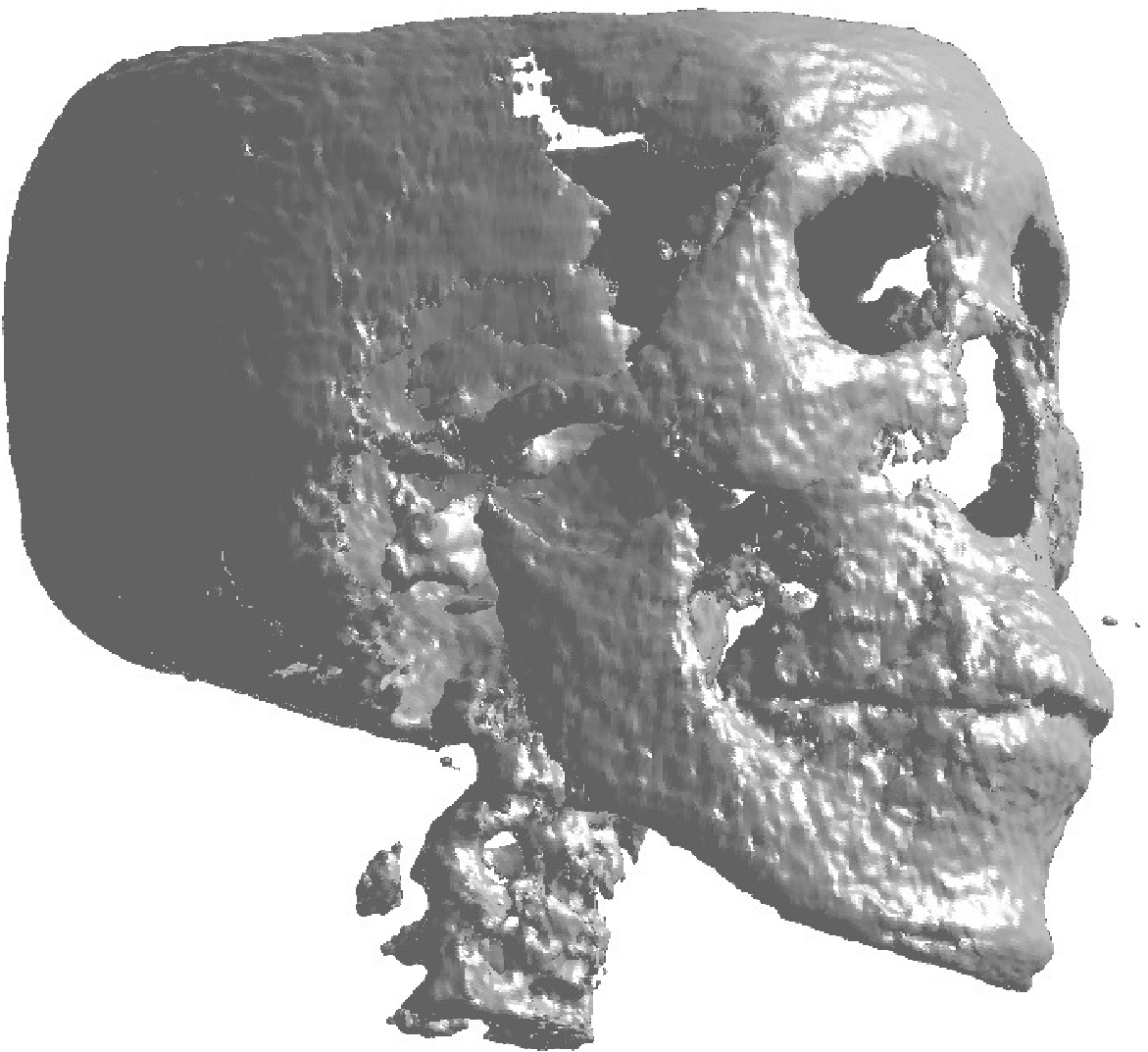} } \ \ \ \
\subfigure[Linear]{ \includegraphics[width=0.23\textwidth]{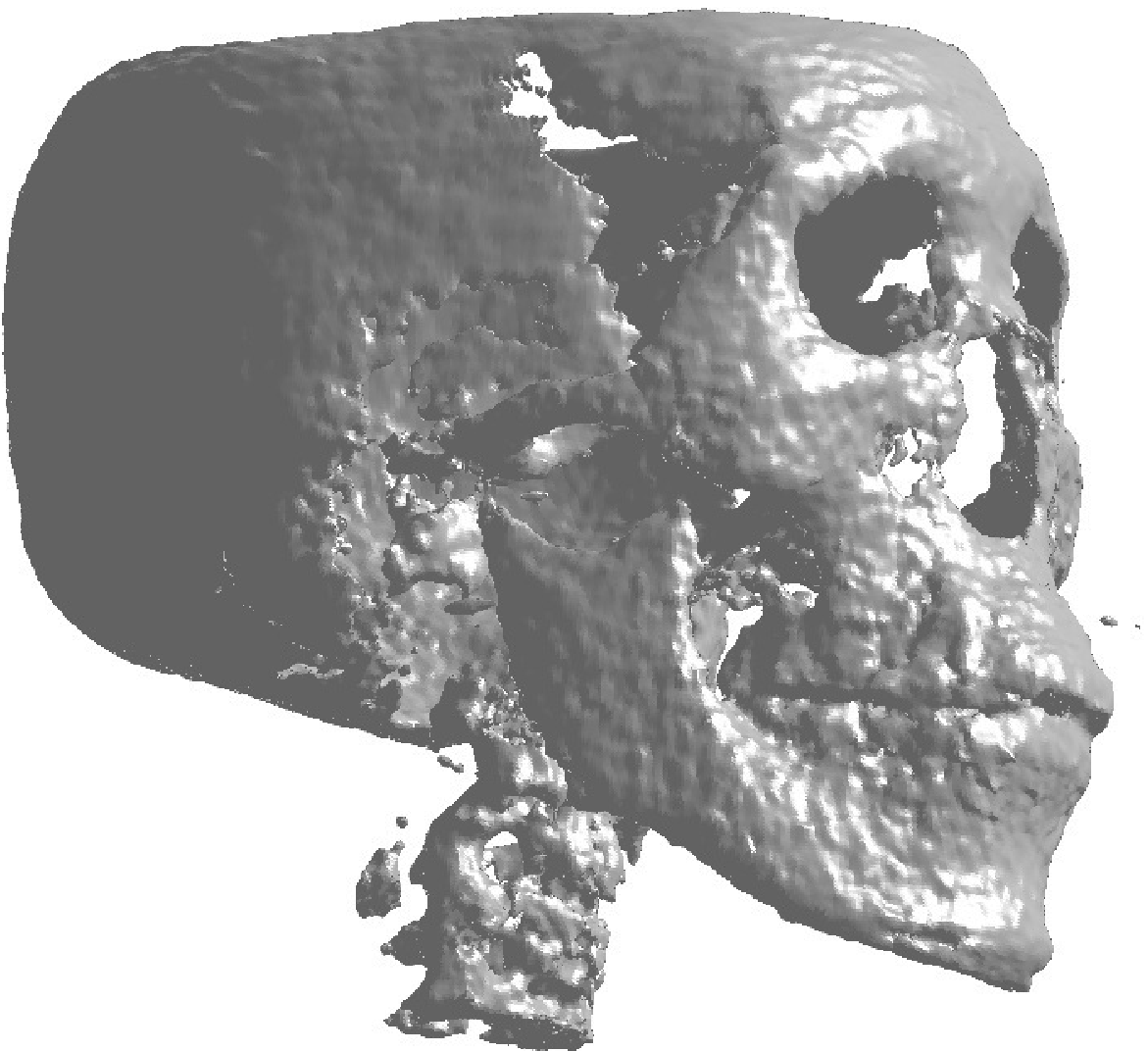} }  \ \ \ \
\subfigure[Cubic]{ \includegraphics[width=0.23\textwidth]{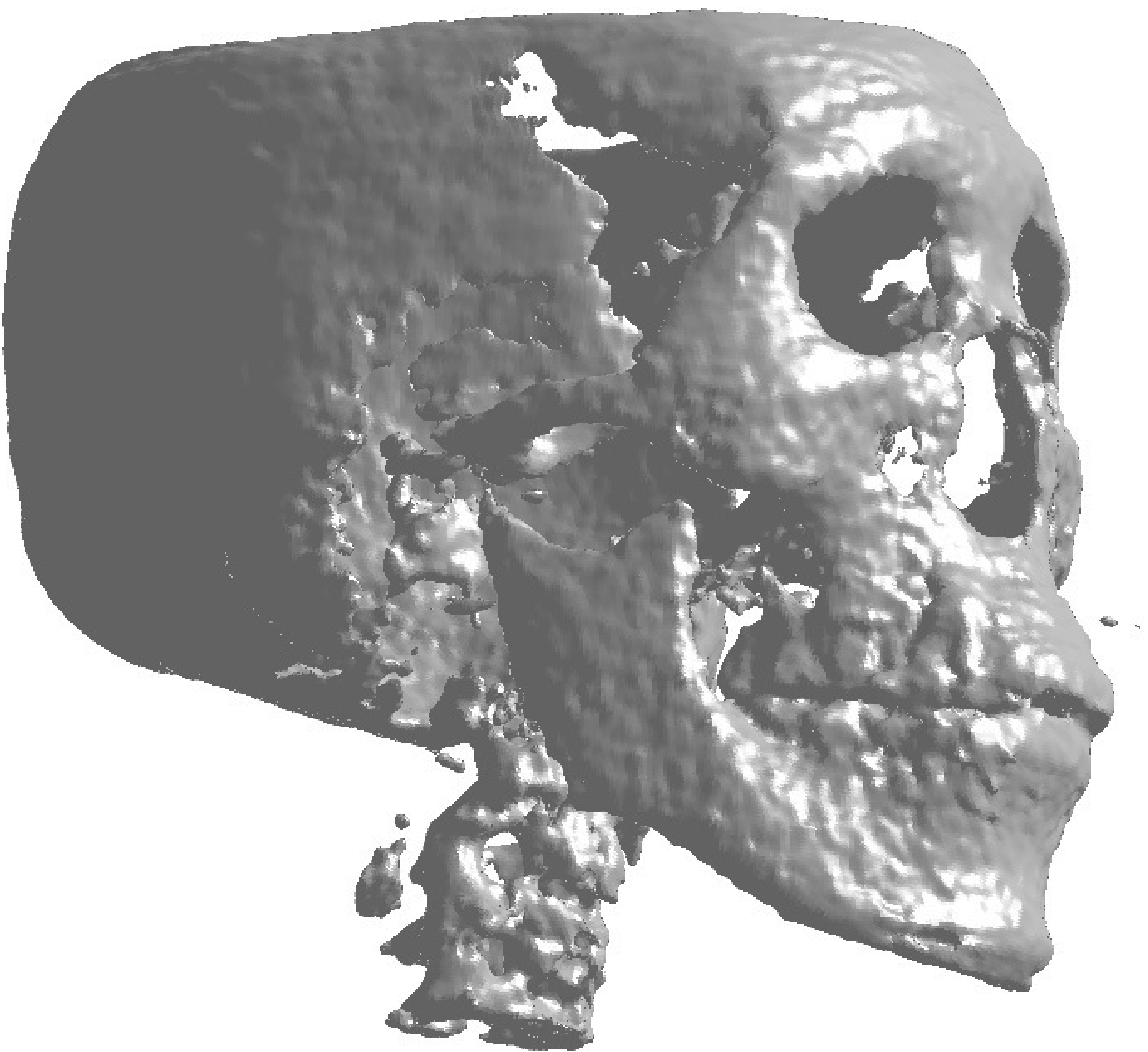} }
\end{center}
\caption{The comparison of isosurface (density function value = 0.09) of the reconstructed 3D medical density function. The first row shows the isosurfaces of the ground truth, the density functions from noisy projections and denoised projections by the TV model, respectively. The second row shows the isosurfaces of the density functions from denoised images by the Haar wavelet system, the piecewise linear B-spline system and the piecewise cubic B-spline system, respectively. } \label{medical251_3d}
\end{figure}

\emph{The SNR values of noisy and denoised versions of some selected projections for all three experiments are summarized in TABLE~\ref{proj_data}; and the Frobenius norms of the differences between reconstructed density functions and the ground truth density function are given in TABLE~\ref{rec_data}}.

\begin{table}[ht]
\centering
\begin{tabular}{|c|c|r|r|r|r|r|}  \hline
Data & Projection \# & Noisy & TV\ \ \  & Haar & Linear & \ Cubic \\
\hline
                        & 100 & 11.44 & 14.52 & - & 19.97  & 20.42   \\
Phantom 2D    & 200 & 14.14 & 16.63 & - & 23.20  & 24.90   \\
                        & 300 & 12.04 & 14.74 & - &  21.30 & 22.82  \\ \hline

                        & 20 & 9.73 & 24.64  & 24.62   & 25.74    & 26.47   \\
Phantom 3D    & 40 & 8.37 & 23.81  & 23.93   & 24.98 & 25.77     \\
                        & 60 & 9.59 & 24.61  & 24.63   & 25.69 & 26.43     \\ \hline

                       & 20 &  6.99  &  12.86 & 12.88  &  12.89   &  12.91  \\
Medical 3D     & 40 &  9.65  &  12.69 & 12.69  &  12.70   &  12.70 \\
                       & 60 &  7.10  &  12.79 & 12.81  &  12.82  &  12.83  \\ \hline
\end{tabular}
\caption{The comparison of the SNR for some selected projections.}
\label{proj_data}
\end{table}

\begin{table}[ht]
\centering
\begin{tabular}{|c|c|r|r|r|r|} \hline
Data & Noisy & TV & \ \ Haar & Linear & \ Cubic \\ \hline
Phantom 2D    & 61.53       &  39.12     & -              &  13.56      & 11.32      \\
Phantom 3D    & 346.33     &  56.98     &  56.65   &   49.47 &  44.11     \\
Medical 3D      & 782.50     &  145.18   &  141.66    &
137.23     &  134.83     \\ \hline
\end{tabular}
\caption{The comparison of the Frobenius norm of the difference between reconstructed object and the ground truth object.}
\label{rec_data}
\end{table}

\end{eg}

\section{Conclusion}
Transmission imaging is widely applied in astronomy and biomedical sciences for macro and micro scale objects, whose physical mechanisms and mathematical models are quite different from reflection imaging frequently used in our everyday life for common scale objects. In this paper, we improved the existing continuity analysis of images generated by transmission imaging in two aspects. First, we consider both parallel and divergent beam geometries, i.e., two basic and important geometries in transmission imaging, while existing analysis applies to only the parallel beam geometry. Second, we prove the continuity property of images generated by transmission under much weaker conditions which admit almost all cases in applications, while previous analysis excludes some common cases such as cylindrical objects. Our analysis shows that images by transmission imaging with both parallel and divergent beam geometries are almost surely continuous functions, even if the density functions of the imaged objects are discontinuous (discontinuous density functions are very common). This is quite different from reflection imaging where images are usually modeled as discontinuous functions. Although the central topic in transmission imaging is the reconstruction of density functions of objects, processing of the image (projection) data before reconstruction is also sometimes important due to the fact that these data usually involve degradations such as Poisson noise. There are many methods in commercial dealing with the transmission images first before performing the reconstruction, and understanding the structures of transmission images is important because better denoised transmission images gives better reconstruction results. Our theoretical analysis may help us to understand the structures of images generated by transmission imaging and provide some information for choosing, designing and testing image processing techniques for transmission imaging. Taking into accounts our analysis, we compared two popular image denoising methods for Poisson noise removal. Numerical experiments are provided to verify our theoretical analysis.

%\appendices
%\section*{Appendix: Implementation details}

%\section*{Acknowledgement}
%This work is supported by.

\end{document}